\definecolor{darkgreen}{rgb}{0.0, 0.6, 0.13}
\newtheorem{thm}{Theorem}[section]
 \newtheorem{lem}[thm]{Lemma}
 \newtheorem{prop}[thm]{Proposition}
 \theoremstyle{definition}
 \theoremstyle{remark}
 \newtheorem{rem}[thm]{Remark}
 \numberwithin{equation}{section}
 \newcommand{\les}{\lesssim}
\newcommand{\Cb}{\mathbb C}
\newcommand{\Eb}{\mathbb E}
\newcommand{\Pb}{\mathbb P}
\newcommand{\Rb}{\mathbb R}
\newcommand{\Tb}{\mathbb T}
\newcommand{\Zb}{\mathbb Z}
\newcommand{\Ac}{\mathcal A}
\newcommand{\Cc}{\mathcal C}
\newcommand{\Fc}{\mathcal F}
\newcommand{\Ic}{\mathcal I}
\renewcommand{\Mc}{\mathcal M}
\newcommand{\Nc}{\mathcal N}
\newcommand{\Qc}{\mathcal Q}
\newcommand{\Rc}{\mathcal R}
\newcommand{\Vc}{\mathcal V}
\newcommand{\Xc}{\mathcal X}
\newcommand{\Yc}{\mathcal Y}
\newcommand{\As}{\mathscr A}
\newcommand{\Bs}{\mathscr B}
\newcommand{\Hs}{\mathscr H}
\newcommand{\Ls}{\mathscr L}
\newcommand{\Ms}{\mathscr M}
\newcommand\<{\langle}
\renewcommand\>{\rangle}
\begin{document}
\thanks{This article is dedicated to the memory of Jean Bourgain}
\author[Yu Deng]{Yu Deng$^1$}
\address{$^1$ Department of Mathematics, University of Southern California, Los Angeles,  CA 90089, USA }
\email{yudeng@usc.edu}
\thanks{$^1$Y. D. is funded in part by NSF DMS-1900251.}
\author[Andrea R. Nahmod]{Andrea R. Nahmod$^2$}
\address{$^2$ 
Department of Mathematics,  University of Massachusetts,  Amherst MA 01003}
\email{nahmod@math.umass.edu}
\thanks{$^2$A.N. is funded in part by NSF DMS-1800852 and the Simons Foundation Collaborations Grant on Wave Turbulence (Nahmod's Award ID 651469).}
\author[Haitian Yue]{Haitian Yue$^3$}
\address{$^3$Department of Mathematics, University of Southern California, Los Angeles,  CA 90089, USA}
\email{haitiany@usc.edu}
  \date{}

\title[Gibbs measure for Hartree]{Invariant Gibbs measure and global strong solutions for the Hartree NLS equation in dimension three}

\begin{abstract} In this paper we consider the defocusing Hartree nonlinear Schr\"odinger equations on $\mathbb T^3$ with real valued and even  potential $V$ and Fourier multiplier decaying like $|k|^{-\beta}$. By relying on the method of random averaging operators \cite{DNY1}, we show that there exists  $\frac{1}{2} \ll \beta_0 <1 $ such that for $ \beta > \beta_0 $ we have invariance of the associated Gibbs measure and global existence of strong solutions in its statistical ensemble. In this way we extend Bourgain's seminal result \cite{Bourgain97} which requires $\beta >2$ in this case.
\end{abstract}

\maketitle

\section{Introduction} In this paper we study the invariant Gibbs measure problem for the nonlinear Schr\"{o}dinger (NLS) equation on $\Tb^3$ with Hartree nonlinearity. Such equation takes the form
\begin{equation}\label{plainHartree}
\left\{
\begin{aligned}
(i\partial_t  + \Delta) u& =  (|u|^2* V)u,\\
u(0)&=u_{\mathrm{in}},
\end{aligned}
\right.
\end{equation} where $V$ is a convolution potential. We will assume that it satisfies the following properties:
\begin{itemize}
\item That $V$ is real-valued and even, and so is $\widehat{V}$;
\item That (\ref{plainHartree}) is \emph{defocusing}, i.e. $V\geq 0$;
\item That $V$ acts like $\beta$ antiderivatives, i.e. $\widehat{V}(0)=1$ and $|\widehat{V}(k)|\lesssim\<k\>^{-\beta}$ for some $\beta\geq0$.
\end{itemize} A typical example for such $V$ is the Bessel potential $\<\nabla\>^{-\beta}$; note that when $V$ is the $\delta$ function (and $\beta=0$) we recover the  usual cubic NLS equation. Our main result, see Theorem \ref{main} below, establishes invariance of Gibbs measure for (\ref{plainHartree}) when $\beta<1$ and is close enough to $1$, greatly improving the previous result of Bourgain \cite{Bourgain97} which assumes $\beta>2$.
\subsection{Background} The equation (\ref{plainHartree}) can be viewed as a regularized or tempered version of the cubic NLS equation, and both naturally arise in the limit of quantum many-body problems for interacting bosons (see e.g. \cite{FKSS, Soh} and references therein). An important question, both physically and mathematically, is to study the construction and dynamics of the \emph{Gibbs measure} for (\ref{plainHartree}), which is a Hamiltonian system.
\subsubsection{Gibbs measure construction} The Gibbs measure, which we henceforth denote by $\mathrm{d}\nu$, is formally expressed as
\begin{equation}\label{naivegibbs}  \mathrm{d}\nu=e^{-H[u]}\prod_{x\in\Tb^3}\,\mathrm{d}u(x),  \end{equation} where $H[u]$ is the renormalization of the Hamiltonian
\[\int_{\Tb^3}|\nabla u|^2+\frac{1}{2}|u|^2(V*|u|^2)\,\mathrm{d}x.\] Rigorously making sense of (\ref{naivegibbs}) is closely linked to the construction of the $\Phi_3^4$ measure in quantum field theory, which has attracted a lot of interest since the 70-80's \cite{Nel, Simon, Aiz, Fro, GJ, LRS} and in recent years \cite{BG,BG2, FKSS, Soh}. In the case of (\ref{plainHartree}), the answer actually depends on the value of $\beta$. When\footnote{We will not study the focusing case $V\leq 0$, where the measure can be constructed only when $\beta>2$; see \cite{OOT}.} $\beta>1/2$, the measure $\mathrm{d}\nu$ can be defined as a weighted version of the Gaussian measure $\mathrm{d}\rho$, namely
\begin{equation}\label{weightgibbs}\mathrm{d}\nu \, =\,  e^{-\int_{\Tb^3} \, \frac{1}{2}\, :\mathrel{|u|^2(V*|u|^2)}: \, dx }\, \cdot \, \mathrm{d}\rho,\quad \mathrm{d}\rho\sim e^{-\frac{1}{2}\int_{\Tb^3}|\nabla u|^2}\prod_{x\in\Tb^3}\,\mathrm{d}x,\end{equation} where $:\mathrel{|u|^2(V*|u|^2)}:$ is a suitable renormalization of the nonlinearity  (see (\ref{renoquartic}) for a precise definition), and the Gaussian free field $\mathrm{d}\rho$ is defined as the law of distribution for the random variable\footnote{Actually the law of (\ref{randomdata}) requires another factor, which is $e^{-\|u\|_{L^2}^2}$, in (\ref{naivegibbs}) and (\ref{weightgibbs}), which does not make a big difference because the $L^2$ norm is also conserved under (\ref{plainHartree}).} 
\begin{equation}\label{randomdata}f(\omega)=\sum_{k\in\Zb^3}\frac{g_k(\omega)}{\<k\>}e^{ik\cdot x},\end{equation} with $\{g_k(\omega)\}$ being i.i.d. normalized centered complex Gaussians. On the other hand, if $0<\beta\leq 1/2$, then $\mathrm{d}\nu$ is a weighted version of a \emph{shifted} Gaussian measure, which is singular with respect to $\mathrm{d}\rho_1$. These results were proved recently by Bringmann \cite{Bringmann1} and Oh-Okamoto-Tolomeo \cite{OOT} by adapting the variational methods of Barashkov-Gubinelli \cite{BG}.

We remark that, in either case above, it can be shown that the Gibbs measure $\mathrm{d}\nu$ is supported in $H^{-1/2-}(\Tb^3)$, the same space as $\mathrm{d}\rho_1$. In particular the typical element in the support of $\mathrm{d}\nu$ has infinite mass, which naturally leads to the \emph{renormalizations} in the construction of $\mathrm{d}\nu$ alluded above, see Section \ref{setup} below. From the physical point of view it is also worth mentioning that, in the same way (\ref{plainHartree}) is derived from quantum many-body systems, the Gibbs measure $\mathrm{d}\nu$, with the correct renormalizations, can also be obtained by taking the limit of thermal states of such systems, at least when $V$ is sufficiently regular (see \cite{FKSS, Soh}).
\subsubsection{Gibbs measure dynamics and invariance} Of same importance as the construction of the Gibbs measure is the study of its dynamics and rigorous justification of its invariance under the flow of (\ref{plainHartree}). The question of proving invariance of Gibbs measures for infinite dimensional Hamiltonian systems, with interest from both mathematical and physical aspects, has been extensively studied over the last few decades. In fact, it is the works \cite{LRS, Bourgain94, Bourgain}---which attempted to answer this question in some special cases---that mark the very beginning of the subject of random data PDEs.

The literature is now extensive, so we will only review those related to NLS equations. After the construction of Gibbs measures in \cite{LRS}, the first invariance result was due to Bourgain \cite{Bourgain94}, which applies in one dimension for focusing sub-quintic equations, and for defocusing equations with any power nonlinearity. Bourgain \cite{Bourgain} then extended the defocusing result to two dimensions, but only for the cubic equation; the two-dimensional case with arbitrary (odd) power nonlinearity was recently solved by the authors \cite{DNY1}. For the case of Hartree nonlinearity (\ref{plainHartree}) in three dimensions, Bourgain \cite{Bourgain97} obtained invariance for $\beta>2$. We also mention the works of Tzvetkov \cite{T06, T08} and of Bourgain-Bulut \cite{BB,BB2} which concern the NLS equation inside a disc or ball, the construction of non-unique weak solutions by Oh-Thomann \cite{OT2} following the scheme in \cite{AC, DaDe, BTT}, and the relevant works on wave equations \cite{BT2,OT1,Bringmann1,Bringmann2,OOT}. In particular the recent work of Bringmann \cite{Bringmann2} establishes Gibbs measure invariance for the \emph{wave} equation with the Hartree nonlinearity (\ref{plainHartree}) for \emph{arbitrary} $\beta>0$.  

The main mathematical challenge in proving invariance of Gibbs measure is the low regularity of the support of the measure, especially in two or more dimensions. For example, for the two dimensional NLS equation with power nonlinearity, the support of the Gibbs measure $\mathrm{d}\nu$ lies in the space of distributions $H^{0-}(\Tb^2)$, while the scaling critical space is $H^{1/2}(\Tb^2)$ for the quintic equation, and approaches $H^1(\Tb^2)$ for equations with high power nonlinearities. This gap is a major reason why the two-dimensional quintic and higher cases have remained open for so many years. In the case of (\ref{plainHartree}) a similar gap is present, namely between the support of $\mathrm{d}\nu$ at $H^{-1/2-}(\Tb^3)$ and the scaling critical space $H^{(1-\beta)/2}(\Tb^3)$ which is higher than $H^0(\Tb^3)$ with $\beta<1$.

On the other hand, it is known since the pioneering work of Bourgain \cite{Bourgain} that with random initial data, one can go below the classical scaling critical threshold and obtain almost-sure well-posedness results. In the recent works \cite{DNY1,DNY2} of the authors, an intuitive probabilistic scaling argument was performed. This leads to the notion of the \emph{probabilistic scaling critical index} $s_{pr}:=-1/(p-1)$ which is much lower than the classical scaling critical index $s_{cr}:=(d/2)-2/(p-1)$ in the case of $p$-th power nonlinearity in $d$ dimensions. In \cite{DNY2} we proved that almost-sure local well-posedness indeed holds in $H^s$ in the full probabilistic subcritical range when $s>s_{pr}$, in any dimensions and for any (odd) power nonlinearity.

For the case of (\ref{plainHartree}), a similar argument as in \cite{DNY1,DNY2} yields that the probabilistic scaling critical index for (\ref{plainHartree}) is $s_{pr}=(-1-\beta)/2$ which is lower than $-1/2$, so it is reasonable to think that almost-sure well-posedness would be true. However the situation here is somewhat different from \cite{DNY1,DNY2} due to the asymmetry of the nonlinearity (\ref{plainHartree}) compared to the power one, which leads to interesting modifications of the methods in these previous works, as we will discuss in Section \ref{idea} below.
\subsubsection{Probabilistic methods}\label{previous} The first idea in proving almost-sure well-posedness was due to Bourgain \cite{Bourgain} and to Da Prato-Debussche \cite{DaDe}, the latter in the setting of parabolic SPDEs, which can be described as a \emph{linear-nonlinear decomposition}. Namely, the solution is decomposed into a linear, random evolution (or noise) term, and a nonlinear term that has strictly higher regularity, thanks to the smoothing effect of randomization. If the linear term has regularity close to scaling criticality, then the nonlinear term can usually be bounded sub-critically, hence a fixed point argument applies. However this idea has its limitations in that the nonlinear term may not be smooth \emph{enough}, and in practice it is usually limited to slightly supercritical cases (relative to deterministic scaling) and does not give optimal results.

In \cite{DNY1}, inspired partly by the \emph{regularity structures theory} of Hairer and the \emph{para-controlled calculus} by Gubinelli-Imkeller-Perkowski in the parabolic SPDE setting, we developed the theory of \emph{random averaging operators}. The main idea is to take the high-low interaction, which is usually the worst contribution in the nonlinear term described above, and express them as a para-product type linear operator---called the random averaging operator---applied to the random initial data. Moreover, this linear operator is \emph{independent} from the initial data it applies to, and has a \emph{randomness structure} which includes the information of the solution at lower scales, see Section \ref{idea}. This structure is then shown to be preserved from low to high frequencies by an \emph{induction on scales} argument, and eventually leads to improved almost-sure well-posedness results.
We refer the reader to \cite{ST} for  an example of a recent application of the method of random averaging operators of \cite{DNY1} to weakly dispersive NLS.

In \cite{DNY2}, the random averaging operators is extended to the more general theory of \emph{random tensors}. In this theory, the linear operators are extended to multilinear operators which are represented by tensors, and whole algebraic and analytic theories are then developed for these random tensors. For NLS equations with odd power nonlinearity, this theory leads to the proof of optimal almost-sure well-posedness results, see \cite{DNY2}. We remark that, while the theory of random tensors is more powerful than random averaging operators, the latter has a simpler structure, is less notation-heavy, and is already sufficient in many situations (especially if one is not very close to probabilistic criticality).

Finally, we would like to mention other probabilistic methods, developed in the recent works of Gubinelli-Koch-Oh \cite{GKO}, Bringmann \cite{Bringmann0,Bringmann2}, and Oh-Okamoto-Tolomeo \cite{OOT}. These methods also go beyond the linear-nonlinear decomposition, and are partly inspired by the parabolic theories. They have important similarities and differences compared to our methods in \cite{DNY1,DNY2}, but they mostly apply for wave equations instead of Schr\"{o}dinger equations, so we will not further elaborate here, but refer the reader to the above papers for further explanation.
\subsection{Setup and the main result}\label{setup} We start by fixing the i.i.d. normalized (complex) Gaussian random variables $\{g_k(\omega)\}_{k\in\Zb^3}$, so that $\Eb g_k=0$ and $\Eb|g_k|^2=1$. Let \begin{equation} \label{data0}f(\omega)=\sum_{ k\in\Zb^3}\frac{g_k(\omega)}{\langle k\rangle}e^{ik\cdot x}, 
 \end{equation} it is easy to see that $f(\omega)\in H^{-1/2-}(\Tb^3)$ almost surely. Let $V:\Tb^3\to\Rb$ is a potential such that $V$ is even, nonnegative, and $V_0=1$, $|V_k|\lesssim\<k\>^{-\beta}$ as described above. Here and below we will use $u_k$ to denote the Fourier coefficients of $u$ and use $\widehat{u}$ to represent time Fourier transform only. In this paper we fix $\beta<1$ and sufficiently close{\footnote{This is a specific value but we do not track it below.}} to $1$. Let $N\in 2^{\Zb_{\geq 0}}\cup\{0\}$ be a dyadic scale, define projections $\Pi_N$ such that $(\Pi_Nu)_k=\mathbf{1}_{\<k\>\leq N}\cdot u_k$, and $\Delta_N=\Pi_N-\Pi_{N/2}$, and define
 \begin{equation}\label{defdata}f_N(\omega)=\Pi_Nf(\omega),\qquad F_N(\omega)=\Delta_Nf(\omega)=f_N(\omega)-f_{N/2}(\omega).\end{equation} we introduce the following truncated and renormalized version of (\ref{plainHartree}), with truncated random initial data, namely:
 \begin{equation}\label{Hartreetrunc1}
\begin{cases}
i\partial_t u_N + \Delta u_N =  \Pi_N[(|u_N|^2*V)\cdot u_N]-\sigma_Nu_N-\Cc_Nu_N \\
u_N(0) = \Pi_N u_{\mathrm{in}}.
\end{cases}
\end{equation} Here in (\ref{Hartreetrunc1}) we fix
\begin{equation}\label{renormalize1}\sigma_N=\Eb\fint_{\Tb^3}|f_N(\omega)|^2\,\mathrm{d}x=\sum_{\<k\>\leq N}\frac{1}{\<k\>^2},\end{equation} and $\Cc_N$ is a Fourier multiplier,
\begin{equation}\label{renormalize2}(\Cc_Nu)_k=(\Cc_N)_k\cdot u_k,\quad (\Cc_N)_k:=\sum_{\<\ell\>\leq N}\frac{V_{k-\ell}}{\<\ell\>^2}.
\end{equation} Note that $u_N$ is supported in $\<k\>\leq N$ for all time. The first counterterm in (\ref{Hartreetrunc1}), namely $-\sigma_Nu_N$, corresponds to the standard Wick ordering, where one fixes $k_1=k_2$ in the expression
\begin{equation}\label{cubicexp}[(|u|^2*V)\cdot u]_k=\sum_{k_1-k_2+k_3=k}V_{k_1-k_2}\cdot u_{k_1}\overline{u_{k_2}}u_{k_3},\end{equation} plugs in $u=f_N(\omega)$, and takes expectations. The second term $-\Cc_Nu_N$ corresponds to fixing $k_2=k_3$, which is present due to the asymmetry of the nonlinearity $(|u|^2*V)\cdot u$. Note that $(\Cc_N)_k$ is uniformly bounded, and thus is unnecessary, if $\beta>1$ (in particular this is the case of Bourgain \cite{Bourgain97}); if $\beta<1$ this becomes a divergent term which needs to be subtracted.

The equation (\ref{Hartreetrunc1}) is a finite dimensional Hamiltonian equation with Hamiltonian
\begin{equation}\label{hamiltonian1}H_N[u] :=\int_{\Tb^3}\big(|\nabla u|^2+\frac{1}{2}|u|^2(V*|u|^2)-\sigma_N|u|^2-\frac{1}{2}\Cc_Nu\cdot\overline{u}+\frac{1}{2}\sigma^2_N-\frac{1}{2}\gamma_N\big),
\end{equation} 
where $\gamma_N= \sum_{\langle k\rangle,\langle \ell \rangle\leq N} \frac{V_{k-\ell}}{\langle k\rangle^2\langle \ell\rangle^2}$.
\begin{rem}
In fact, the Hamiltonian $H_N[u]$ can be also expressed as $\int_{\Tb^3}\big(|\nabla u|^2+ :\mathrel{|u|^2(V*|u|^2)}:\big)$, where the suitable renormalized nonlinearity $:\mathrel{|u|^2(V*|u|^2)}:$ is defined
\begin{equation}
\label{renoquartic}
:\mathrel{|u|^2(V*|u|^2)}: = |u|^2(V*|u|^2)-\sigma_N (V*|u|^2)-\sigma_N|u|^2-\Cc_Nu\cdot\overline{u}+\sigma^2_N-\gamma_N.
\end{equation}
Notice that $\int_{\Tb^3}\sigma_N (V*|u|^2) =\int_{\Tb^3}\sigma_N|u|^2$ since $\widehat{V}(0)=1$.
\end{rem}
We can define the corresponding truncated and renormalized Gibbs measure, namely
\begin{equation}\label{Gibbstrunc}\mathrm{d}\eta_N(u)=\frac{1}{Z_N}e^{-H_N[u]-\|u\|_{L^2}^2}\prod_{\<k\>\leq N}\,\mathrm{d}u_k\mathrm{d}\overline{u_k}
\end{equation} where $Z_N>0$ is a normalization constant making $\mathrm{d}\nu_N$ a probability measure. Clearly $\mathrm{d}\nu_N$ is invariant under the finite dimensional flow (\ref{Hartreetrunc1}). Note that we can also write
\begin{equation}\label{Gibbstrunc2}\mathrm{d}\eta_N(u)=\frac{1}{Z_N^*}e^{-H_N^{\mathrm{pot}}[u]}\,\mathrm{d}\rho_N(u),
\end{equation} where $Z_N^*$ is another positive constant, $\mathrm{d}\rho_N$ is the law of distribution for the linear Gaussian random variable $f_N(\omega):=\Pi_Nf(\omega)$, and $H_N^{\mathrm{pot}}[u]$ represents the potential energy
\begin{equation}\label{hamiltonian2}H_N^{\mathrm{pot}}[u]=\int_{\Tb^3}\big(\frac{1}{2}|u|^2(V*|u|^2)-\sigma_N|u|^2-\frac{1}{2}\Cc_Nu\cdot\overline{u}+\frac{1}{2}\sigma^2_N-\frac{1}{2}\gamma_N\big).
\end{equation}

Now, define $\Pi_N^\perp=1-\Pi_N$, let $\Vc_N$ and $\Vc_N^\perp$ be the ranges of the projections $\Pi_N$ and $\Pi_N^\perp$, and define $\mathrm{d}\rho$ and $\mathrm{d}\rho_N^\perp$ be the laws of distribution for $f(\omega)$ and $\Pi_N^\perp f(\omega)$ respectively. Then we have $\mathrm{d}\rho=\mathrm{d}\rho_N\times\mathrm{d}\rho_N^\perp$; moreover we define \[\mathrm{d}\nu_N=\mathrm{d}\eta_N\times\mathrm{d}\rho_N^\perp=G_N(u)\cdot\mathrm{d}\rho,\qquad G_N(u):=\frac{1}{Z_N^*}e^{-H_N^{\mathrm{pot}}[\Pi_Nu]}.\] We have the following result. Recall that in this paper we are fixing $\beta<1$ close enough to $1$, in particular $\beta>1/2$.
\begin{prop}\label{acGibbs} Suppose $\beta>1/2$, then $G_N(u)$ converges to a limit $G(u)$ in $L^q(\mathrm{d}\rho)$ for all $1\leq q<\infty$, and the sequence of measures $\mathrm{d}\nu_N$ converges to a probability measure $\mathrm{d}\nu$ in the sense of total variations. The measure $\mathrm{d}\nu$ is call the \emph{Gibbs measure} associated with the system (\ref{plainHartree}).
\end{prop}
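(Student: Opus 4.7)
The proof reduces to two independent tasks: (i) show the random exponent $H_N^{\mathrm{pot}}[\Pi_N u]$ converges in $L^q(\mathrm{d}\rho)$ for every finite $q$, and (ii) establish a uniform $L^{q'}(\mathrm{d}\rho)$ bound on the exponential $e^{-H_N^{\mathrm{pot}}[\Pi_N u]}$ for every finite $q'$. Combined with continuity of the exponential, Vitali's theorem then yields $L^q$-convergence of $G_N$ to a limit $G$, and the total variation convergence $\mathrm{d}\nu_N\to\mathrm{d}\nu$ follows from $\|\mathrm{d}\nu_N-\mathrm{d}\nu\|_{\mathrm{TV}}=\|G_N-G\|_{L^1(\mathrm{d}\rho)}$.

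For (i), I would use the Wick-ordered representation from the Remark: $H_N^{\mathrm{pot}}[\Pi_N u]=\tfrac{1}{2}\int_{\Tb^3}:\!|u_N|^2(V*|u_N|^2)\!:\,\mathrm{d}x$. Substituting $u_N=f_N(\omega)$ produces a fourth-chaos random variable
\[ X_N := 2H_N^{\mathrm{pot}}[\Pi_N f] = \sum_{\substack{k_1-k_2+k_3-k_4=0\\ \<k_i\>\leq N}}\frac{V_{k_1-k_2}\cdot :g_{k_1}\overline{g_{k_2}}g_{k_3}\overline{g_{k_4}}:}{\<k_1\>\<k_2\>\<k_3\>\<k_4\>}. \]
By $L^2$-orthogonality of distinct Wick monomials and $|V_{k_1-k_2}|\lesssim\<k_1-k_2\>^{-\beta}$,
\[ \|X_N-X_M\|_{L^2(\mathrm{d}\rho)}^2 \lesssim \sum_{\substack{k_1-k_2+k_3-k_4=0\\ M<\max_i\<k_i\>\leq N}}\frac{\<k_1-k_2\>^{-2\beta}}{\prod_i\<k_i\>^2}. \]
Setting $m=k_1-k_2=k_4-k_3$, each of the $k_2$- and $k_3$-sums $\sum_k\<k\>^{-2}\<k+m\>^{-2}$ is bounded by $\<m\>^{-1}$ on $\Zb^3$, reducing the total to $\sum_m\<m\>^{-2-2\beta}$, which converges precisely when $\beta>1/2$ and whose tail vanishes as $M\to\infty$. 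Gaussian hypercontractivity (Nelson) upgrades this Cauchy property to $L^q(\mathrm{d}\rho)$ for every $q<\infty$, so $X_N\to X_\infty$ in every $L^q(\mathrm{d}\rho)$.

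The main obstacle is (ii), since naive pointwise bounds on $-X_N$ are ruined by the divergent renormalizations $\sigma_N$, $\gamma_N$, $\Cc_N$, and the exponential of a fourth-chaos variable is not automatically integrable. Here I would follow the variational strategy of Barashkov-Gubinelli, as implemented for closely related problems in \cite{Bringmann1, OOT}: realize $\mathrm{d}\rho$ through a cylindrical Brownian motion $(B_t)_{t\in[0,1]}$ on an appropriate Hilbert space $\Hc$ whose terminal value has the distribution of $f$, and invoke the Bou\'e-Dupuis formula
\[ -\log\Eb\bigl[e^{-qX_N/2}\bigr] = \inf_{\theta}\Eb\Bigl[\tfrac{q}{2}X_N(f+I(\theta))+\tfrac{1}{2}\int_0^1\|\theta_t\|_\Hc^2\,\mathrm{d}t\Bigr], \]
the infimum taken over adapted drifts $\theta$. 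Expanding $X_N$ at the shifted input $f+\psi$, with $\psi=\Pi_N I(\theta)$, produces the stochastic Wick objects already controlled in (i), together with polynomial cross-terms in $\psi$. Exploiting the pointwise defocusing $|f+\psi|^2\cdot V*|f+\psi|^2\geq 0$ to retain the positive quartic in $\psi$, I would absorb the singular mixed terms via Cauchy-Schwarz and Young's inequality against both the coercive drift cost $\|\theta\|_\Hc^2$ and this positive quartic. This yields $\sup_N\Eb[e^{-qX_N/2}]\leq C_q<\infty$ for every finite $q$, and in particular $0<\inf_N Z_N^*\leq\sup_N Z_N^*<\infty$.

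To assemble the proof, fix $q<\infty$. Step (i) gives $X_N\to X_\infty$ in $L^q(\mathrm{d}\rho)$, hence $e^{-X_N/2}\to e^{-X_\infty/2}$ in probability; step (ii) applied with $2q$ in place of $q$ furnishes a uniform $L^{2q}(\mathrm{d}\rho)$ bound on $\{e^{-X_N/2}\}$. Vitali's theorem then produces convergence in $L^q(\mathrm{d}\rho)$. Dividing by the convergent partition functions $Z_N^*\to Z^*>0$ gives $G_N\to G$ in $L^q(\mathrm{d}\rho)$, and the total variation statement follows at once from the $L^1$-convergence of the densities.
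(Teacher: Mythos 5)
Your proposal is correct and follows essentially the same route as the paper: the paper's proof of Proposition \ref{acGibbs} is simply a citation to Bringmann \cite{Bringmann1} and Oh--Okamoto--Tolomeo \cite{OOT}, whose arguments are exactly your two-step scheme --- $L^2$ convergence of the fourth Wick chaos (where the $\sum_m\<m\>^{-2-2\beta}$ computation pins down $\beta>1/2$) upgraded by hypercontractivity, plus uniform exponential integrability via the Bou\'e--Dupuis variational formula of Barashkov--Gubinelli. Your outline of the variational step is necessarily schematic (the absorption of the singular cross-terms is the main technical content of those references), but it correctly identifies the mechanism, so nothing essential is missing relative to what the paper itself provides.
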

\begin{proof} This is proved in the recent works of Bringmann \cite{Bringmann1} and Oh-Okamoto-Tolomeo \cite{OOT}. Strictly speaking they are dealing with the case of real-valued $u$ (as they are concerned about the wave equation), but the proof can be readily adapted to the complex-valued case here.
\end{proof}
Now we can state our main theorem\footnote{We remark that Bringmann has an unpublished proof for the same result assuming $\beta>3/2$.}.
\begin{thm}\label{main} Let $\beta<1$ be close enough to $1$. There exists a Borel set $\Sigma\subset H^{-1/2-}(\Tb^3)$ such that $\nu(\Sigma)=1$, and the following holds. For any $u_{\mathrm{in}}\in\Sigma$, let $u_N(t)$ be defined by (\ref{Hartreetrunc1}), then 
\[\lim_{N\to\infty}u_N(t)=u(t)\] exists in $C_t^0H_x^{-1/2-}(\Rb\times\Tb^3)$, and $u(t)\in\Sigma$ for all $t\in\Rb$. This $u(t)$ solves (\ref{plainHartree}) with a suitably renormalized nonlinearity, and defines a mapping $\Phi_t:\Sigma\to\Sigma$ for each $t\in\Rb$. These mappings satisfy the group properties $\Phi_{t+s}=\Phi_t\Phi_s$, and keeps the Gibbs measure $\mathrm{d}\nu$ invariant, namely $\nu(E)=\nu(\Phi_t(E))$ for any $t\in\Rb$ and Borel set $E\subset\Sigma$.
\end{thm}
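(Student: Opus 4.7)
The plan is to follow the by-now-classical Bourgain scheme of (i) establishing uniform-in-$N$ almost-sure local well-posedness for the truncated flow (\ref{Hartreetrunc1}) with random data distributed as $f_N(\omega)$, (ii) globalizing via the invariance of the truncated Gibbs measure $\mathrm{d}\nu_N$, and (iii) passing to the limit $N\to\infty$ using the total-variation convergence $\mathrm{d}\nu_N\to\mathrm{d}\nu$ from Proposition \ref{acGibbs}. The substance of the argument lies in step (i), which requires a probabilistic fixed-point scheme far below the deterministic scaling threshold $s_{\mathrm{cr}}=(1-\beta)/2$. Because the Gibbsian data live only in $H^{-1/2-}(\Tb^3)$ while $\beta$ is only slightly less than $1$, a naive Bourgain--Da Prato--Debussche ansatz $u_N=f_N+w_N$ cannot produce a sufficiently smooth $w_N$: the worst high-low interactions, obtained by pairing two low-frequency copies of the random linear evolution with one high-frequency copy, essentially preserve the regularity of $f_N$ itself. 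I therefore plan to use the random averaging operator machinery of \cite{DNY1}, adapted to the asymmetric Hartree trilinear form.

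More precisely, I would run an induction on dyadic scales $N\in 2^{\Zb_{\geq 0}}$, maintaining at scale $N/2$ a decomposition $u_{N/2}=y_{N/2}+z_{N/2}$ in which $y_{N/2}$ is an iterated random-averaging-operator expression in the Gaussian inputs $\{F_M(\omega)\}_{M\leq N/2}$ and $z_{N/2}$ lies in a Bourgain space $X^{s,b}$ with small positive $s$. At scale $N$, writing $u_N=u_{N/2}+v_N$, I would further split
\[v_N=\Psi_N\,F_N(\omega)+w_N,\]
where $\Psi_N$ is a random linear operator built from $u_{N/2}$ alone---hence independent of $F_N(\omega)$---and $w_N$ is a smoother remainder. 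The kernel of $\Psi_N$ has the schematic form
\[h_{k,k_3}(t)=\sum_{\substack{k_1-k_2+k_3=k\\\<k_1\>,\<k_2\>\leq N/2}}V_{k_1-k_2}\,(\text{lower-scale random coefficients})_{k_1,k_2}\,e^{it\Omega},\qquad \Omega=|k_1|^2-|k_2|^2+|k_3|^2-|k|^2,\]
and the Hartree factor $V_{k_1-k_2}$ supplies an additional $\<k_1-k_2\>^{-\beta}$ of smoothing in the Gaussian pair. After cancelling the resonant contractions $k_1=k_2$ and $k_2=k_3$, which are exactly those absorbed into $\sigma_N$ and $\Cc_N$ in (\ref{Hartreetrunc1}), the remaining multilinear Gaussian tensor can be controlled by the moment/large-deviation estimates of \cite{DNY1}; provided $\beta$ is close enough to $1$, these estimates close and yield a contraction for the pair $(\Psi_N,w_N)$ in $\|\Psi_N\|_{\mathrm{op}}+\|w_N\|_{X^{s,b}}$ on a random time interval $[0,\tau]$ with $\tau>0$ independent of $N$.

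The main obstacle is the \emph{asymmetry} of the trilinear map $(|u|^2*V)u$: unlike the symmetric power nonlinearities treated in \cite{DNY1,DNY2}, the three frequency slots $k_1,k_2,k_3$ in (\ref{cubicexp}) play distinct roles, so I would need to classify every pairing of high- and low-frequency inputs separately rather than rely on a single representative one. In particular, the two counterterms $\sigma_N$ and $\Cc_N$ correspond to two genuinely different contractions ($k_1=k_2$ arising when $u_{k_3}$ is the high-frequency operand, and $k_2=k_3$ arising when $u_{k_1}$ is the operand), both of which must be cancelled \emph{at the operator level} before any tensor estimate is attempted, to avoid logarithmically divergent pieces. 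In addition, the $(|u_{\mathrm{hi}}|^2*V)u_{\mathrm{lo}}$ type interaction does not fit naturally as an RAO acting on $F_N(\omega)$ but rather as an operator acting on the lower-scale piece $u_{N/2}$, and the smoothing gain from $V$ must be fully exploited in order to absorb it into $w_N$. Once the local theory is in place, globalization follows the standard invariant-measure template of \cite{Bourgain,Bourgain97}: invariance of $\mathrm{d}\nu_N$ under (\ref{Hartreetrunc1}) promotes the local existence bound into the existence of a set of $\nu_N$-measure at least $1-\eps$ on which the solution extends to $[-T,T]$ for any prescribed $T$; intersecting over dyadic scales and sending $\eps\downarrow 0$ produces a Borel set $\Sigma$ with $\nu(\Sigma)=1$ on which $u=\lim_{N\to\infty}u_N$ exists in $C_t^0H_x^{-1/2-}$. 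Invariance of $\nu$ under the resulting flow $\Phi_t$ is then obtained by passing to the limit in the identity $\nu_N(E)=\nu_N(\Phi_t^N E)$, combining the total-variation convergence $\nu_N\to\nu$ from Proposition \ref{acGibbs} with the stability of the construction.
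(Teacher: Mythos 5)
Your global architecture (local well-posedness for the truncated flow, globalization via invariance of $\mathrm{d}\eta_N$, passage to the limit using the total-variation convergence of Proposition \ref{acGibbs}) matches the paper, and the random averaging operator ansatz $v_N=\Psi_N F_N(\omega)+w_N$ with $\Psi_N$ independent of $F_N$ is indeed the core of the local theory. However, there is a genuine gap at the heart of your local argument: you assert that the factor $V_{k_1-k_2}\lesssim\<k_1-k_2\>^{-\beta}$ ``supplies an additional smoothing in the Gaussian pair'' and that the tensor estimates therefore ``close, provided $\beta$ is close enough to $1$.'' This is false for the portion of the nonlinearity with $|k_1-k_2|\lesssim N^{\varepsilon}$ (in particular $|k_1-k_2|\sim 1$): there the potential yields no derivative gain regardless of how close $\beta$ is to $1$, and this contribution is \emph{probabilistically critical} --- the Picard iterates with all inputs at frequency $N$ stall at regularity $H^{1/2-}$ and the associated linear operator has $X^{s,b}\to X^{s,b}$ norm $\sim 1$ (with a logarithmic divergence upon summing scales), so no contraction in $\|\Psi_N\|_{\mathrm{op}}+\|w_N\|_{X^{s,b}}$ can absorb it. The paper isolates exactly this piece by introducing a second modified linear flow $\xi^N$ (see (\ref{defxin}) with the multiplier $\Mc^{\ll}$ of (\ref{lownonlin1})), setting $\rho^N=\xi^N-\psi^N$, and devoting Section 5 to estimating $\rho^N$ at an intermediate regularity between $\psi^N$ and $z_N$.

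Two further ingredients needed for that step are absent from your plan. First, the cancellation used to control the paired Gaussian contractions $k_1'=k_2'$ is \emph{not} the Wick-type subtraction of $\sigma_N$ and $\Cc_N$ that you describe (those remove the resonances $k_1=k_2$ and $k_2=k_3$ at the level of the physical frequencies); it is the unitarity identity (\ref{unitarity}) for the matrices $H^{N,L}$ and $M^N$, which follows from mass conservation of the linear flows and permits replacing $\<k_1'\>^{-2}$ by $\<k_1'\>^{-2}-\<k_1\>^{-2}$ in the paired sums --- the idea going back to \cite{Bourgain97}. Second, the hardest configuration in the $\rho^N$ estimate (term V) requires the structural decomposition of $Q^N=M^N-H^N$ in Lemma \ref{decomposeQ}, whose point is that the middle factor $\Rc(k_1)$ depends on $k_1$ only through $m^*\cdot k_1$, yielding a one-dimensional extra independence that rescues the counting. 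Without identifying the critical $\Mc^{\ll}$ component and these two mechanisms, the fixed-point scheme you outline does not close for any $\beta<1$.
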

\begin{rem} As in \cite{DNY1,DNY2}, the sequence $\{u_N\}$ can be replaced by other canonical approximation sequences, for example with the sharp truncations $\Pi_N$ on initial data replaced by smooth truncations, or with the projection $\Pi_N$ on the nonlinearity in (\ref{Hartreetrunc1}) omitted. The limit obtained does not depend on the choice of such sequences, and the proof will essentially be the same.
\end{rem}
\subsubsection{Regarding the range of $\beta$} The range of $\beta$ obtained in Theorem \ref{main} is clearly not optimal. In fact, the equation (\ref{plainHartree}) with Gibbs measure data is probabilistically subcritical as long as $\beta>0$, and one should expect the same result at least when $\beta>1/2$ (so the Gibbs measure is absolutely continuous with the Gaussian free field).

The purpose of this paper, however, is to provide an example where the method of random averaging operators \cite{DNY1} is applied so that one can significantly improve the existing probabilistic results ($\beta$ close but smaller than $1$ versus $\beta>2$ in \cite{Bourgain97}), while keeping the presentation relatively short. In order to treat $\beta>1/2$ one would need to adapt the sophisticated theory of random tensors \cite{DNY2} which will considerably increase the length of this work, so we decide to leave this part to a next paper.

As for the case $0<\beta<1/2$, one would need to deal with the mutual singularity between the Gibbs measure and the Gaussian free field (of course, if one studies the local well-posedness problem with Gaussian initial data as in (\ref{data0}), which is of course different from Gibbs, then a modification of the random tensor theory \cite{DNY2} would also likely work for all $\beta>0$). The recent work of Bringmann \cite{Bringmann2} provides a nice example where this issue is solved in the context of wave equations, and it would be interesting to see whether this can be extended to Schr\"{o}dinger equations. Finally, the case $\beta=0$, which is the famous Gibbs measure invariance problem for the three-dimensional cubic NLS equation, still remains an outstanding open problem as of now. It is probabilistically \emph{critical}, which presumably would require completely new techniques to solve.
\subsection{Main ideas}\label{idea} Due to the absolute continuity of the Gibbs measure in Proposition \ref{acGibbs}, in order to prove Theorem \ref{main}, we only need to consider initial data distributed according to $\mathrm{d}\rho$ for (the renormalized version of) (\ref{plainHartree}), and the initial data distributed according to $\mathrm{d}\rho_N$ for (\ref{Hartreetrunc1}). In other words, we may assume $u(0)=f(\omega)$ for (\ref{plainHartree}), and $u_N(0)=f_N(\omega)$ for (\ref{Hartreetrunc1}).
\subsubsection{Random averaging operators}\label{rao0} Let us focus on (\ref{Hartreetrunc1}); for simplicity we will ignore the renormalization terms. The approach of Bourgain and of Da Prato-Debussche corresponds to decomposing
\[u_N(t)=e^{it\Delta}f_N(\omega)+v(t),\] where $f_N$ is as in (\ref{defdata}), and $v(t)$ is the nonlinear evolution. In particular this $v(t)$ contains a trilinear Gaussian term
\[v^*(t)=\int_0^t e^{i(t-t')\Delta}\Pi_N[(|e^{it'\Delta}f_N(\omega)|^2*V)e^{it'\Delta}f_N(\omega)]\,\mathrm{d}t'.\] This term turns out to only have $H^{0-}$ regularity, which is not regular enough for a fixed point argument (note that the classical scaling critical threshold is $H^{(1-\beta)/2}$). Therefore this approach does not work.

Nevertheless, one may observe that the only contribution to $v^*$ that has worst ($H^{0-}$) regularity is when the first two input factors are at \emph{low} frequency and the third factor is at \emph{high} frequency, such as
\[\int_0^t e^{i(t-t')\Delta}\Pi_N[(|e^{it'\Delta}f_{N'}(\omega)|^2*V)e^{it'\Delta}F_N(\omega)]\,\mathrm{d}t'\] for $N'\ll N$ and $F_N$ as in (\ref{defdata}). Moreover this low frequency component $f_{N'}$ may also be replaced by the corresponding nonlinear term at frequency $N'$, so it makes sense to separate the low-low-high interaction term $\psi^N$ defined by
\begin{equation}
\left\{
\begin{aligned}(i\partial_t+\Delta)\psi^N&=\Pi_N[(|u_{N/2}|^2*V)\psi^N],\\
\psi^N(0)&=F_N(\omega)
\end{aligned}
\right.
\end{equation} as the singular part of $y_N:=u_N-u_{N/2}$, so that $y_N-\psi^N$ has higher regularity.

The idea of considering high-low interactions is consistent with the para-controlled calculus in \cite{GIP,Ha,GKO}. However in those works the singular term $\psi^N$ and the regular term $y_N-\psi^N$ are characterized only by their regularity (for example one is constructed via fixed point argument in $H^{0-}$ and the other in $H^{1/2-}$), which, as pointed out in \cite{DNY1}, is not enough in the context of Schr\"{o}dinger equations. Instead, it is crucial that one studies the operator, referred to as the \emph{random averaging operator} in \cite{DNY1}, which maps $z$ to the solution to the equation
\begin{equation}
\left\{
\begin{aligned}(i\partial_t+\Delta)\psi&=\Pi_N[(|u_{N/2}|^2*V)\psi],\\
\psi(0)&=z.
\end{aligned}
\right.
\end{equation} Note that the kernel of this operator, which we denote by $H^N=(H^N)_{kk'}(t)$, is a Borel function of $\{g_{k}(\omega)\}_{\<k\>\leq N/2}$ and is \emph{independent} from $F_N(\omega)$. Moreover, this $H^N$ encodes the whole \emph{randomness structure} of $u_{N/2}$, which is captured in two particular matrix norm bounds for $H^N$. Essentially, they involve the $\ell_k^2\to\ell_{k'}^2$ operator norm and the $\ell_{kk'}^2$ Hilbert-Schmidt norm for fixed time $t$ (or fixed Fourier variable $\lambda$), see Section \ref{defnormsec} for details.

This is the main idea of the random averaging operators in \cite{DNY1}. Basically, it allows one to fully exploit the randomness structure of the solution at all scales, which is necessary for the proof in the setting of Schr\"{o}dinger equations in the lack of any smoothing effect.
\subsubsection{The special term $\rho^N$: a `critical' component} In addition to the ansatz introduced in Section \ref{rao0}, it turns out that an extra term is necessary due to the structure (especially the asymmetry) of the nonlinearity (\ref{plainHartree}). Recall that $(|u|^2*V)u$ can be expressed as in (\ref{cubicexp}); for simplicity we will ignore any resonances (which are cancelled by the renormalizations), i.e. assume $k_2\not\in\{k_1,k_3\}$ in (\ref{cubicexp}). Here, if $|k_1-k_2|\gtrsim N^\varepsilon$ for some small constant $\varepsilon$, then the potential $V_{k_1-k_2}$, which is bounded by $\<k_1-k_2\>^{-\beta}$, will transform into a derivative gain, which allows one to close easily using the random averaging operator ansatz in Section \ref{rao0}.

However, suppose $|k_1-k_2|$ is very small, say $|k_1-k_2|\sim 1$ in (\ref{cubicexp}), then the potential does not lead to any gain of derivatives, and we will see that this particular term in fact exhibits some (probabilistically) ``critical'' feature. To see this, let us define $\Nc$ to be this portion of nonlinearity (and the corresponding multilinear expression),
\begin{equation}\label{non1} \Nc(u,v,w)=\Pi_N[(\Pi_1(u\overline{v})*V)\cdot w],\end{equation} note the $\Pi_1$ projector restricting to $|k_1-k_2|\sim1$. Then, if we define the iteration terms
\[u^{(0)}(t)=e^{it\Delta}F_N(\omega);\quad u^{(m)}(t)=\sum_{m_1+m_2+m_3=m-1}\int_0^t e^{i(t-t')\Delta}\Nc(u^{(m_1)},u^{(m_2)},u^{(m_3)})(t')\,\mathrm{d}t',\] it follows from simple calculations that $u^{(0)}$ has regularity $H^{-1/2-}$, while each $u^{(m)}$, where $m\geq 1$ has \emph{exactly} regularity $H^{1/2-}$. Therefore, although $u^{(1)}$ is indeed more regular than $u^{(0)}$, the higher order iterations are \emph{not} getting smoother despite all input functions (which are $F_N(\omega)$) having the \emph{same} (and high) frequency. This is in contrast with the ``genuinely (probabilistically) subcritical'' situations (for the standard NLS) in \cite{DNY2}, where for fixed positive constants $\varepsilon$ and $c$,  the $m$-th iteration $u^{(m)}$, assuming all input frequencies are the same, will have increasing and positive regularity in $H^{\varepsilon m-c}$ as $m$ grows and becomes large.
Similarly, one may consider the linear operator
\[z\mapsto\int_0^t e^{i(t-t')\Delta}\Nc(z(t'),e^{it'\Delta}F_N(\omega),e^{it'\Delta}F_N(\omega))\,\mathrm{d}t',\] with $\Nc$ as in (\ref{non1}) and in typical subcritical cases the norm of this operator from a suitable $X^{s,b}$ space to itself would be $N^{-\alpha}$ for some $\alpha>0$, see \cite{DNY1,DNY2}. However here (for Hartree) one can check that the corresponding norm is in fact $\sim1$, and may even exhibit a logarithmic divergence if one adds up different scales.

Therefore, it is clear that the contribution $\Nc$ as in (\ref{non1}) needs a special treatment in addition to the ansatz in Section \ref{rao0}. Fortunately, this term does not depend on the value of $\beta$ and was already treated in Bourgain's work \cite{Bourgain97}. In this work, we introduce an extra term $\rho^N$, which corresponds to the term treated in Bourgain \cite{Bourgain97}, by defining $\xi^N$ such that
\begin{equation}
\left\{
\begin{aligned}(i\partial_t+\Delta)\xi^N&=\Pi_N[(|u_{N/2}|^2*V)\xi^N+\widetilde{\Pi}_{N^{\varepsilon}}((|u_N|^2-|u_{N/2}|^2)*V)]\xi^N,\\
\xi^N(0)&=F_N(\omega)
\end{aligned}
\right.
\end{equation} and defining $\rho^N=\xi^N-\psi^N$, where $\widetilde{\Pi}_{N^{\varepsilon}}$ is a smooth truncation at frequency $N^\varepsilon$ for some small $\varepsilon$. This term is then measured at regularity $H^s$ for some $s<1/2$, while the remainder term $z_N:=y_N-\xi^N$, where $y_N=u_N-u_{N/2}$, is measured at regularity $H^{s'}$ for some $s<s'<1/2$. See Section \ref{ansatz1} for the solution ansatz and Proposition \ref{mainprop} for the precise formulations.
\subsubsection{Additional remark} Note that the precise definitions of the equations satisfied by $\psi^N$ and $\xi^N$, see (\ref{lineareqn}) and (\ref{defxin}), involve projection $\Delta_N$ on the right hand sides; this is to make sure that $(\psi^N)_k$ and $(\xi^N)_k$ are exactly supported in $N/2<\<k\>\leq N$, so that one can exploit the cancellation due to the unitarity of the matrices $H^N$ (corresponding to $\psi^N$), as well as the matrices $M^N$ which corresponds to the term $\xi^N$. This unitarity comes from the mass conservation property of the linear equations defining these matrices, and already plays a key role in Bourgain's work \cite{Bourgain97}. See Section \ref{unit} for details.
\section{Preparations}
\subsection{Reduction of the equation}
We start with the system (\ref{Hartreetrunc1}) with initial data $u_N(0)=f_N(\omega)$. Clearly $(u_N)_k$ is supported in $\<k\>\leq N$. If we denote the right hand side of (\ref{Hartreetrunc1}) by $\Pi_N\Nc(u_N)$, then in Fourier space we have
\begin{equation}\label{nonlinearity1}\Nc(u)_k=\Nc^\circ(u)_k+u_k\cdot\bigg(\sum_{\ell}|u_\ell|^2-\alpha_N\bigg)+u_k\cdot\sum_{\ell\neq k,\langle \ell\rangle\leq N} V_{k-\ell}\bigg(|u_\ell|^2-\frac{1}{\<\ell\>^2}\bigg)-\frac{u_k}{\<k\>^2},
\end{equation} 
\begin{equation}\label{nonlinearity2}\Nc^\circ(u)_k=\sum_{\substack{k_1-k_2+k_3=k\\k_2\not\in\{k_1,k_3\}}}V_{k_1-k_2}\cdot u_{k_1}\overline{u}_{k_2}u_{k_3}.
\end{equation} We will extend $\Nc^\circ(u)$, which is a cubic polynomial of $u$, to an $\Rb$-trilinear operator $\Nc^\circ(u,v,w)$ in the standard way. Note that\[\sum_\ell|(u_N)_\ell|^2=\fint_{\Tb^3}|u_N|^2\,\mathrm{d}x\] is conserved under the flow (\ref{Hartreetrunc1}), we may get rid of the second term on the right hand side of (\ref{nonlinearity1}) by a gauge transform
\[u_N\to e^{iB_Nt}u_N,\quad B_N:=\sum_{\<\ell\>\leq N}\frac{|g_\ell|^2-1}{\<\ell\>^2}.\] If we further define the profile $v_N$ by 
\[(v_N)_k(t)=e^{-it|k|^2}e^{iB_Nt}(u_N)_k(t),\] then $v$ will satisfy the integral equation
\begin{multline}\label{reducedeqn}(v_N)_k(t)=(f_N)_k-i\int_0^t \Pi_N\Mc^\circ(v_N,v_N,v_N)_k(s)\,\mathrm{d}s\\-i\int_0^t(v_N)_k(s)\sum_{\ell\neq k,\<\ell\>\leq N}V_{k-\ell}\bigg(|(v_N)_\ell(s)|^2-\frac{1}{\<\ell\>^2}\bigg)\,\mathrm{d}s+i\int_0^t\frac{(v_N)_k(s)}{\<k\>^2}\,\mathrm{d}s
\end{multline} where
\begin{equation}\label{reducedeqn2}\Mc^\circ(u,v,w)_k(s)=\sum_{\substack{k_1-k_2+k_3=k\\k_2\not\in\{k_1,k_3\}}}e^{is\Omega}\cdot V_{k_1-k_2}\cdot u_{k_1}(s)\overline{v_{k_2}}(s)w_{k_3}(s),\quad \Omega:=|k_1|^2-|k_2|^2+|k_3|^2-|k|^2.
\end{equation} Below we will focus on the system (\ref{reducedeqn})--(\ref{reducedeqn2}).
\subsection{Notations and norms} We setup some basic notations and norms needed later in the proof.
\subsubsection{Notations} As denoted above, we will use $v_k$ to denote Fourier coefficients, and $\Fc v_k=\widehat{v}_k=\widehat{v}_k(\lambda)$ denotes the Fourier transform in time. For a finite index set $A$, we will write $k_A=(k_j:j\in A)$ where each $k_j\in\Zb^3$ and denote by $h_{k_A}$ a tensor $h:(\Zb^3)^A\to\Cb$. We may also define tensors involving $\lambda$ variables where $\lambda\in\Rb$.

We fix the parameters, to be used in the proof, as follows. Let $\varepsilon>0$ be sufficiently small absolute constant. Let $\varepsilon_1$ and $\varepsilon_2$ be fixed such that $\varepsilon_2\ll\varepsilon_1\ll\varepsilon$. Let $\beta<1$ be such that $1-\beta\ll\varepsilon_2$, and choose $\delta$ such that $\delta\ll1-\beta$, and $\kappa$ such that $\kappa\gg \delta^{-1}$. We use $\theta$ to denote any generic small positive constant such that $\theta\ll\delta$ (which may be different at different places). Let $b=1/2+\kappa^{-1}$, so $1-b=1/2-\kappa^{-1}$. Finally, let $\tau$ be sufficiently small compared to all the above parameters, denote $J=[-\tau,\tau]$. Fix a smooth cutoff function $\chi(t)$ which equals 1 for $|t|\leq 1$ and equals 0 for $|t|\geq 2$, and define $\chi_\tau(t):=\chi(\tau^{-1}t)$. We use $C$ to denote any large absolute constant, and $C_\theta$ for any large constant depending on $\theta$. If some event happens with probability $\geq 1-C_\theta e^{-A^\theta}$, where $A$ is a large parameter, we say this event happens \emph{$A$-certainly}.
\subsubsection{Norms}\label{defnormsec}If $(B,C)$ is a partition of $A$, namely $B\cap C=\varnothing$ and $B\cup C=A$, we define the norm $\|h\|_{k_B\to k_C}$ such that
\begin{equation}\label{defnorm0}\|h\|_{k_B\to k_C}^2=\sup\bigg\{\sum_{k_C}\bigg|\sum_{k_B}h_{k_A}z_{k_B}\bigg|^2:\sum_{k_B}|z_{k_B}|^2=1\bigg\}.\end{equation} The same notation also applies for tensors involving the $\lambda$ variables. For functions $u=u_k(t)$ and $h=h_{kk'}(t)$, and $0<c<1$, we also define the norms
\begin{equation}\label{defnorm}
\begin{aligned}
\|u\|_{X^c}^2&:=\int_\Rb\<\lambda\>^{2c}\|\widehat{u}_k(\lambda)\|_{k}^2\,\mathrm{d}\lambda,\\
\|h\|_{Y^c}^2&:=\int_\Rb\<\lambda\>^{2c}\|\widehat{h}_{kk'}(\lambda)\|_{k\to k'}^2\,\mathrm{d}\lambda,\\
\|h\|_{Z^c}^2&:=\int_\Rb\<\lambda\>^{2c}\|\widehat{h}_{kk'}(\lambda)\|_{kk'}^2\,\mathrm{d}\lambda.
\end{aligned}
\end{equation} For any interval $I$, define the corresponding localized norms
\begin{equation}\label{deflocalnorm}
\|u\|_{X^{c}(I)}:=\inf\big\{\|v\|_{X^c}:v=u\mathrm{\ on\ }I\big\}\end{equation} and similarly define $Y^c(I)$ and $Z^c(I)$. By abusing notations, we will call the above $v$ an \emph{extension} of $u$, though it's actually an extension of the restriction of $u$ to $I$.
\subsection{Preliminary estimates} Here we record some basic estimates. Most of them are standard, or are in our previous works \cite{DNY1,DNY2}.
\subsubsection{Linear estimates} Define the original and truncated Duhamel operators
\begin{equation}\label{defi}Iv(t)=\int_0^tv(t')\,\mathrm{d}t',\quad \Ic_\chi v(t)=\chi(t)\int_0^t\chi(t')v(t')\,\mathrm{d}t'.
\end{equation} 
\begin{lem}\label{duhamelform0} We have the formula
\begin{equation}\label{duhamelform}\widehat{\Ic_\chi v}(\lambda)=\int_{\Rb}I(\lambda,\lambda')\widehat{v}(\lambda')\,\mathrm{d}\lambda',
\end{equation} where the kernel $I$ satisfies that
\begin{equation}\label{duhamleker}|I|+|\partial_{\lambda,\lambda'}I|\lesssim\bigg(\frac{1}{\langle \lambda\rangle^3}+\frac{1}{\langle\lambda-\lambda'\rangle^3}\bigg)\frac{1}{\langle\lambda'\rangle}\lesssim\frac{1}{\langle\lambda\rangle\langle\lambda-\lambda'\rangle}.
\end{equation}
\end{lem}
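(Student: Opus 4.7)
The plan is to derive an explicit formula for $I(\lambda,\lambda')$ and estimate it term by term. Writing $v(t')=(2\pi)^{-1}\int e^{i\lambda' t'}\widehat{v}(\lambda')\,d\lambda'$ and exchanging the order of integration, one finds
\[
I(\lambda,\lambda')=\frac{1}{2\pi}\int_\Rb e^{-i\lambda t}\chi(t)K(t,\lambda')\,dt,\qquad K(t,\lambda')\defeq\int_0^t\chi(t')e^{i\lambda' t'}\,dt'.
\]
The goal is then to extract the factor $\<\lambda'\>^{-1}$ from one integration by parts in $t'$ inside $K$, and the remaining decay in $\lambda$ or $\lambda-\lambda'$ from the $C_c^\infty$ regularity of $\chi$.

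For $|\lambda'|\le 1$, $\chi(t)K(t,\lambda')$ is smooth and compactly supported in $t$ with all $C^m$ norms (and their $\lambda'$-derivatives) uniform in $\lambda'$, so repeated $t$-integration by parts yields $|\partial^\alpha_{\lambda,\lambda'}I|\les_N\<\lambda\>^{-N}$ for any $N$ and $|\alpha|\le 1$, and the claimed bound is immediate since $\<\lambda'\>\sim 1$. For $|\lambda'|\ge 1$, integrating by parts once in $t'$ and using $\chi(0)=1$ gives
\[
K(t,\lambda')=\frac{\chi(t)e^{i\lambda' t}-\chi(0)}{i\lambda'}-\frac{K_1(t,\lambda')}{i\lambda'},\qquad K_1(t,\lambda')\defeq\int_0^t\chi'(t')e^{i\lambda' t'}\,dt',
\]
so that
\[
2\pi\,I(\lambda,\lambda')=\frac{1}{i\lambda'}\Bigl[\widehat{\chi^2}(\lambda-\lambda')-\chi(0)\widehat{\chi}(\lambda)-\widehat{\chi K_1(\cdot,\lambda')}(\lambda)\Bigr].
\]
The first two summands decay rapidly in $\lambda-\lambda'$ and $\lambda$ respectively, since $\chi^2,\chi\in C_c^\infty$ have Schwartz Fourier transforms.

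The main step, which I expect to be the main obstacle, is the third summand. Here the key observation is that $\chi\equiv 1$ in a neighborhood of $0$, so $\chi^{(j)}(0)=0$ for every $j\ge 1$; iterated integration by parts in $t'$ inside $K_1$ therefore produces no boundary contribution at the origin, yielding the expansion $K_1(t,\lambda')=\sum_{j=1}^{N}\chi^{(j)}(t)e^{i\lambda' t}/(i\lambda')^j+O_N(\<\lambda'\>^{-N})$. After multiplying by $\chi(t)$ and Fourier-transforming in $t$, each term in the sum becomes $(i\lambda')^{-j}\widehat{\chi\chi^{(j)}}(\lambda-\lambda')$, Schwartz in $\lambda-\lambda'$, while the remainder is handled by a further IBP in $t$ (now without boundary terms, since the resulting integrand is $C_c^\infty$). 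This contributes an extra $\<\lambda'\>^{-1}$ factor to the third summand, which is therefore absorbed. Combining, one obtains $|I(\lambda,\lambda')|\les_N(\<\lambda\>^{-N}+\<\lambda-\lambda'\>^{-N})\<\lambda'\>^{-1}$ for any $N$, in particular $N=3$; the derivative bounds follow by differentiating the explicit formulas above, since $\partial_\lambda$ and $\partial_{\lambda'}$ only insert polynomial factors of $t$ inside Fourier integrals of $C_c^\infty$ functions, together with at most one additional $1/\lambda'$ from $\partial_{\lambda'}(1/\lambda')$, which is absorbed by the excess decay. The final inequality $\les\<\lambda\>^{-1}\<\lambda-\lambda'\>^{-1}$ in \eqref{duhamleker} is elementary, following from $\<\lambda-\lambda'\>\les\<\lambda\>\<\lambda'\>$ and $\<\lambda\>\les\<\lambda-\lambda'\>\<\lambda'\>$. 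Every other step is a routine application of integration by parts.
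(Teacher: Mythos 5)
Your proof is correct. The paper itself gives no argument here --- it simply cites \cite{DNY0}, Lemma 3.1 --- and your computation (explicit kernel $I(\lambda,\lambda')=\frac{1}{2\pi}\int e^{-i\lambda t}\chi(t)K(t,\lambda')\,dt$, one integration by parts in $t'$ to extract $\<\lambda'\>^{-1}$ using $\chi(0)=1$, the observation that $\chi^{(j)}(0)=0$ for $j\geq1$ kills the boundary terms in the higher-order expansion of $K_1$, and rapid decay in $\lambda$ or $\lambda-\lambda'$ from the Schwartz transforms of $\chi\chi^{(j)}$) is precisely the standard argument behind that citation. The only point worth making explicit in a write-up is that each integration by parts in $t$ applied to the remainder $\chi(t)R_N(t,\lambda')$ may cost a factor $\<\lambda'\>$ (from differentiating $e^{i\lambda' t}$), so one must take $N$ large relative to the number of $t$-integrations by parts; your phrasing implicitly accommodates this, and the conclusion is unaffected.
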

\begin{proof} See \cite{DNY0}, Lemma 3.1 whence by a similar proof, one can also prove (\ref{duhamleker}) for $|\partial_{\lambda,\lambda'}I|$.
\end{proof}
\begin{prop}[Short time bounds]\label{shorttime} Let $\varphi$ be any Schwartz function, recall that $\varphi_\tau(t)=\varphi(\tau^{-1}t)$ for $\tau\ll1$. Then for any $u=u_k(t)$ we have
\begin{equation}\label{sttime1}\|\varphi_{\tau}\cdot u\|_{X^{c}}\lesssim {\tau}^{c_1-c}\|u\|_{X^{c_1}}
\end{equation} provided either $0<c\leq c_1<1/2$, or $u_k(0)=0$ and $1/2<c\leq c_1<1$. The same result also holds if $u=u(t)$ is measured in norms other than $\ell^2$, so (\ref{sttime1}) is true with $X$ replaced by $Y$ or $Z$.
\end{prop}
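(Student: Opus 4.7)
The three norms $X^c,Y^c,Z^c$ have the common form $\|\cdot\|_{L^2_\lambda(\langle\lambda\rangle^{2c}d\lambda;\,\mathcal N)}$ of the time Fourier transform, with $\mathcal N$ one of $\ell^2_k$, operator norm $\ell^2_k\to\ell^2_{k'}$, or $\ell^2_{kk'}$. In the two Hilbert cases Plancherel in the auxiliary variable reduces the estimate to the scalar one $\|\varphi_\tau g\|_{H^c_t}\lesssim\tau^{c_1-c}\|g\|_{H^{c_1}_t}$ applied componentwise, preserving the vanishing $u_k(0)=0$ in each component. For $Y^c$ the triangle inequality at fixed $\lambda$ gives $\|\widehat{\varphi_\tau h}(\lambda)\|_{\mathrm{op}}\le(|\widehat\varphi_\tau|\ast_\lambda H)(\lambda)$ with $H(\lambda):=\|\widehat h(\lambda)\|_{\mathrm{op}}$, and dominating $|\widehat\varphi_\tau|$ by $\widehat{\Xi_\tau}$ for a smooth non-negative Schwartz bump $\Xi_\tau$ at scale $\tau$ reduces again to the scalar estimate; the operator-valued Hölder bound $\|h(t)\|_{\mathrm{op}}\lesssim|t|^{c_1-1/2}\|h\|_{Y^{c_1}}$, proved by Cauchy-Schwarz on $h(t)-h(0)=\int\widehat h(\lambda)(e^{i\lambda t}-1)d\lambda/(2\pi)$, substitutes for the scalar Sobolev embedding in the high-regularity case.

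With these reductions it suffices to prove the scalar estimate, and my plan is to work with the equivalent characterization $\|\cdot\|_{H^c_t}^2\sim\|\cdot\|_{L^2}^2+[\cdot]^2_{\dot H^c}$ (valid for $c\in(0,1)$) together with the pointwise triangle inequality
\[
|(\varphi_\tau g)(t)-(\varphi_\tau g)(s)|\le|\varphi_\tau(t)|\,|g(t)-g(s)|+|\varphi_\tau(t)-\varphi_\tau(s)|\,|g(s)|.
\]
The two resulting double integrals against $|t-s|^{-1-2c}$ are split according to whether $|t-s|\le\tau$ or $|t-s|>\tau$. On the near region the identity $|t-s|^{-1-2c}\le\tau^{2(c_1-c)}|t-s|^{-1-2c_1}$ pulls out the desired factor and leaves the $\dot H^{c_1}$ seminorm of $g$; on the far region the crude bound $|g(t)-g(s)|^2\lesssim|g(t)|^2+|g(s)|^2$ combined with $\int_{|t-s|>\tau}|t-s|^{-1-2c}\,ds\lesssim\tau^{-2c}$ reduces everything to the local estimate $\|g\,\mathbf{1}_{|t|\lesssim\tau}\|_{L^2}^2\lesssim\tau^{2c_1}\|g\|_{H^{c_1}}^2$. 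The second integral is analogous after noting by scaling that $\int|\varphi_\tau(t)-\varphi_\tau(s)|^2|t-s|^{-1-2c}\,dt=\tau^{-2c}\mathcal K(s/\tau)$ for a bounded, rapidly decreasing $\mathcal K$, with Schwartz tails summed dyadically in $|s|/\tau$.

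The local bound $\|g\,\mathbf{1}_{|t|\lesssim\tau}\|_{L^2}^2\lesssim\tau^{2c_1}\|g\|_{H^{c_1}}^2$ is where the two regimes of the proposition diverge. For $0<c\le c_1<1/2$ it is immediate from Hölder combined with the one-dimensional Sobolev embedding $H^{c_1}(\Rb)\hookrightarrow L^{2/(1-2c_1)}(\Rb)$. For $1/2<c\le c_1<1$ with $g(0)=0$, the Hölder embedding $H^{c_1}(\Rb)\hookrightarrow C^{c_1-1/2}(\Rb)$ together with the vanishing yields the pointwise bound $|g(t)|\lesssim|t|^{c_1-1/2}\|g\|_{H^{c_1}}$, which integrates directly to the same local estimate. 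The vanishing is essential here because smooth bumps $\varphi_\tau$ are not uniformly-in-$\tau$ bounded multipliers on $H^s$ for $s>1/2$, and the Hölder decay near the origin is precisely what absorbs this growth.

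The main technical obstacle is the far-region contribution, where Schwartz tails of $\varphi_\tau$ must be summed against dyadic pieces of $g$ at scales $|s|\sim 2^j\tau$; convergence of the dyadic sum requires $c_1-c<1/2$, which is automatic from the hypotheses in both ranges. The other delicate point is the $Y^c$ reduction, where the operator-valued Hölder bound $\|h(t)\|_{\mathrm{op}}\lesssim|t|^{c_1-1/2}\|h\|_{Y^{c_1}}$ replaces the scalar embedding and is what propagates the matrix vanishing $h(0)=0$ into the physical-space estimate needed to close the Gagliardo computation.
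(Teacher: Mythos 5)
The paper does not prove this lemma in-house (it simply cites \cite{DNY2}, Lemma 4.2), so your argument is necessarily an independent route. Your scalar estimate via the Gagliardo seminorm is essentially correct: the near/far splitting at $|t-s|\sim\tau$, the two incarnations of the local bound $\|g\mathbf{1}_{|t|\lesssim R}\|_{L^2}^2\lesssim R^{2c_1}\|g\|_{H^{c_1}}^2$ (capped at $R=1$), and the identification of $c_1-c<1/2$ as the condition for the dyadic tail sum to close are all right. One small inaccuracy: your kernel $\mathcal K$ is not rapidly decreasing but only $O(\langle s'\rangle^{-1-2c})$ (the dominant contribution comes from $t'$ near the core of $\varphi$ with $|t'-s'|\sim|s'|$); this polynomial decay is still summable against $(2^j\tau)^{2c_1}$ precisely because $c_1-c<1/2$, so nothing breaks. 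The componentwise reductions for $X^c$ and $Z^c$, which are $\ell^2$-valued, are also fine.

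The genuine gap is the $Y^c$ case in the regime $1/2<c\le c_1<1$. Passing to the scalar majorant $H(\lambda):=\|\widehat{h}(\lambda)\|_{\mathrm{op}}$ and the function $G$ with $\widehat{G}=H$ irrevocably destroys the cancellation: even when $h(0)=0$ one has $G(0)=\frac{1}{2\pi}\int\|\widehat{h}(\lambda)\|_{\mathrm{op}}\,\mathrm{d}\lambda>0$, generically comparable to $\|h\|_{Y^{c_1}}$, so the local bound $\int_{|t|\le\tau}|G|^2\lesssim\tau^{2c_1}\|G\|_{H^{c_1}}^2$ needed to close your Gagliardo computation for $\Xi_\tau G$ is false (one only gets $\tau$, hence $\tau^{1/2-c}$ overall, which diverges). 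The operator-valued H\"older bound $\|h(t)\|_{\mathrm{op}}\lesssim|t|^{c_1-1/2}\|h\|_{Y^{c_1}}$ cannot substitute, because it controls a physical-space quantity that has no quantitative relation to $|G(t)|$ or to the Fourier-side quantity defining $Y^c$: there is no upper bound for $\int\langle\lambda\rangle^{2c}\|\widehat{f}(\lambda)\|_{\mathrm{op}}^2\,\mathrm{d}\lambda$ in terms of $\int\|f(t)\|_{\mathrm{op}}^2\,\mathrm{d}t$ plus an operator-valued Gagliardo seminorm (take $f(t)=\phi(t)\sum_{j\le n}e^{i\lambda_j t}\,e_j\otimes e_j$ with widely separated $\lambda_j$: the left side grows like $n$ while the physical-space quantities stay bounded). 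The fix is to implement the cancellation \emph{before} taking operator norms, and to treat high time-frequencies separately. Split $\widehat{h}=\widehat{h}\mathbf{1}_{|\lambda|>\tau^{-1}}+\widehat{h}\mathbf{1}_{|\lambda|\le\tau^{-1}}$. For the high part a direct Schur bound on the convolution kernel $\langle\lambda\rangle^{c}\,|\widehat{\varphi_\tau}(\lambda-\lambda_0)|\,\langle\lambda_0\rangle^{-c}\mathbf{1}_{|\lambda_0|>\tau^{-1}}$ gives uniform boundedness, and then $\langle\lambda_0\rangle^{c}\le\tau^{c_1-c}\langle\lambda_0\rangle^{c_1}$ on that support; no cancellation is needed and only $\|\widehat h(\lambda_0)\|_{\mathrm{op}}$ enters. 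For the low part, $\|h_{\mathrm{lo}}(0)\|_{\mathrm{op}}=\|h_{\mathrm{hi}}(0)\|_{\mathrm{op}}\lesssim\tau^{c_1-1/2}\|h\|_{Y^{c_1}}$ by Cauchy--Schwarz (this is where $h(0)=0$ is used), so the contribution of $h_{\mathrm{lo}}(0)\varphi_\tau$ is $\lesssim\tau^{c_1-1/2}\cdot\tau^{1/2-c}=\tau^{c_1-c}$, and $h_{\mathrm{lo}}(t)-h_{\mathrm{lo}}(0)=\frac{1}{2\pi}\int_{|\lambda_0|\le\tau^{-1}}\widehat{h}(\lambda_0)(e^{it\lambda_0}-1)\,\mathrm{d}\lambda_0$ is handled by Minkowski and Cauchy--Schwarz against $\langle\lambda_0\rangle^{c_1}$ using $\|\varphi_\tau(e^{it\lambda_0}-1)\|_{H^c_t}\lesssim|\lambda_0|\tau^{(3-2c)/2}$. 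Every step here uses only the triangle inequality for the operator norm after the algebraic cancellations, so this argument covers $X$, $Y$ and $Z$ in both regimes at once and would also streamline your scalar proof.
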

\begin{proof} See \cite{DNY2}, Lemma 4.2.
\end{proof}
\begin{lem}[Suitable extensions]\label{extension}
Suppose $f(x,t)$ is a function defined in $t\in [-\tau, \tau]=J$ with $|\tau|\ll 1$. Define that 
\begin{equation}\label{lem2.3:g}
g(t) =\begin{cases}
f(t) & \text{if } |t|\leq \tau\\
f(\tau) & \text{if } t> \tau\\
f(-\tau) & \text{if } t<-\tau.
\end{cases}
\end{equation} 
For any Schwartz function $\varphi$,  we have 
\begin{equation}\label{lem2.3:bound1}
\|\varphi(t)\cdot g(t)\|_{X^{b}}\lesssim \|f\|_{X^{b_1}(J)} + \|f\|_{L_t^{\infty}L^2_x(J)},
\end{equation}
provided either $0<b< b_1<1/2$ or $1/2<b< b_1<1$.
When $1/2<b< b_1<1$, we have 
\begin{equation}\label{lem2.3:bound2}
\|\varphi(t)\cdot g(t)\|_{X^{b}}\lesssim \|f\|_{X^{b_1}(J)}.
\end{equation}
\end{lem}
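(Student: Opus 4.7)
The plan is to localize $g$ by a smooth cutoff near $J$ and then reduce everything to standard Sobolev multiplier and extension estimates in the time variable (the $L^2_x$ structure is carried through trivially throughout). First I would fix a smooth bump $\psi$ with $\psi\equiv 1$ on $J$ and $\supp\psi\subset[-2\tau,2\tau]$, and decompose
\[\varphi(t)g(t)=\varphi(t)\psi(t)g(t)+\varphi(t)(1-\psi(t))g(t).\]
The outer piece is the easier of the two. On $\supp(1-\psi)$ we have $|t|\ge\tau$, so $g$ equals the boundary slice $f(\tau)$ or $f(-\tau)$, giving
\[\varphi(1-\psi)g=f(\tau)\cdot\varphi(t)(1-\psi(t))\mathbf{1}_{t>0}+f(-\tau)\cdot\varphi(t)(1-\psi(t))\mathbf{1}_{t<0}.\]
Since $\psi$ is identically $1$ in a neighborhood of $0$, the cutoff $(1-\psi)$ and all its derivatives vanish at $0$, so each $(1-\psi)\mathbf{1}_{t\gtrless 0}$ is in fact $C^\infty$; multiplied by the Schwartz $\varphi$ they are Schwartz in $t$. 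This yields $\|\varphi(1-\psi)g\|_{X^b}\lesssim\|f\|_{L^\infty_t L^2_x(J)}$ for every $b>0$.

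For the inner piece I would introduce the explicit constant extension $\tilde g$ of $f$ from $J$ to $[-2\tau,2\tau]$, namely $\tilde g=f$ on $J$, $\tilde g\equiv f(\tau)$ on $(\tau,2\tau]$ and $\tilde g\equiv f(-\tau)$ on $[-2\tau,-\tau)$, so that $\varphi\psi g=\varphi\psi\tilde g$. The key claim to establish is
\[\|\tilde g\|_{X^{b_1}([-2\tau,2\tau])}\;\lesssim\;\|f\|_{X^{b_1}(J)}+\|f\|_{L^\infty_t L^2_x(J)}.\]
Once this is in hand, the fact that $\varphi\psi$ is a smooth compactly supported multiplier, combined with the global inclusion $\|\cdot\|_{X^b(\Rb)}\le\|\cdot\|_{X^{b_1}(\Rb)}$ valid for $b<b_1$, gives $\|\varphi\psi\tilde g\|_{X^b(\Rb)}\lesssim\|\tilde g\|_{X^{b_1}([-2\tau,2\tau])}$, and together with the outer bound this proves (\ref{lem2.3:bound1}). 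For the stronger bound (\ref{lem2.3:bound2}) under $1/2<b_1<1$ one then invokes the one-dimensional Sobolev embedding $H^{b_1}_t L^2_x\hookrightarrow L^\infty_t L^2_x$ to absorb $\|f\|_{L^\infty_t L^2_x(J)}$ into $\|f\|_{X^{b_1}(J)}$.

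The hard step is the constant-extension bound on $\tilde g$. For $0<b_1<1/2$ it is routine: sharp cutoffs $\mathbf{1}_I$ are bounded multipliers on $H^{b_1}_t$, so the zero extension of $f$ from $J$ has $X^{b_1}$ norm comparable to $\|f\|_{X^{b_1}(J)}$ and the added constant pieces $f(\pm\tau)\mathbf{1}_{(\pm\tau,\pm 2\tau]}$ contribute at most $|f(\tau)|+|f(-\tau)|\lesssim\|f\|_{L^\infty_t L^2_x(J)}$. For $1/2<b_1<1$ sharp cutoffs are no longer multipliers, but $f$ is automatically Hölder continuous in $t$ (with values in $L^2_x$) of order $b_1-1/2$ by Sobolev embedding, so $\tilde g$ is continuous on $[-2\tau,2\tau]$. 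I would then estimate its $X^{b_1}$-Gagliardo seminorm by splitting $[-2\tau,2\tau]^2$ according to whether each variable lies in $J$ or in $J':=[-2\tau,2\tau]\setminus J$. The $J\times J$ term is bounded by the seminorm of $f$, the $J'\times J'$ term is finite because $\tilde g$ is piecewise constant there, and the delicate cross terms reduce, after an elementary integration in one variable, to the one-sided trace-type integral $\int_J\|f(t)-f(\pm\tau)\|_{L^2_x}^2\,|t\mp\tau|^{-2b_1}\,dt$, which is controlled by $\|f\|_{X^{b_1}(J)}^2$ via the standard fractional Hardy inequality.
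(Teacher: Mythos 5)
Your route is genuinely different from the paper's. The paper reduces $g$ to (two applications of) the operation of freezing $f$ at a point, splits off the value of $f$ at that point so that the remaining piece vanishes there, and then handles the resulting sharp Heaviside cutoff by Littlewood--Paley decomposing the sign function and applying the short-time bound (Proposition \ref{shorttime}) to each dyadic piece at scale $N^{-1}$; the vanishing at the cut point is exactly what supplies the hypothesis $u(0)=0$ needed in that proposition when $b>1/2$. Your replacement of this mechanism by the Gagliardo-seminorm characterization of $H^{b}$ for $0<b<1$, with the boundary cross-terms controlled by a one-dimensional fractional Hardy (trace) inequality applied to $f-f(\pm\tau)$, is a correct and self-contained way to treat the genuinely hard part of the lemma, and for $0<b<b_1<1/2$ your argument goes through as written.

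In the regime $1/2<b<b_1<1$, however --- the one the paper actually uses, since $b=\tfrac12+\kappa^{-1}$ --- your write-up repeatedly loses powers of $\tau^{-1}$ that the lemma cannot afford. First, $(1-\psi)\mathbf{1}_{t>0}$ is indeed $C^\infty$, but it is a smoothed Heaviside whose transition occupies an interval of length $\tau$, so $\|\varphi(1-\psi)\mathbf{1}_{t>0}\|_{H^b_t}\sim\tau^{1/2-b}$ rather than $O(1)$: ``Schwartz'' does not mean ``uniformly bounded in $H^b_t$'' when the profile lives at scale $\tau$. Second, multiplication by the scale-$\tau$ bump $\varphi\psi$ on $X^{b_1}$ costs another factor $\tau^{1/2-b_1}$ --- this is precisely the content of Proposition \ref{shorttime}, whose $b>1/2$ case demands vanishing at the center to avoid that loss. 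Third, the $J'\times J'$ part of the Gagliardo integral contains the cross term between the two components of $J'$, of size $\|f(\tau)-f(-\tau)\|_{L^2_x}^2\,\tau^{1-2b_1}$, which is ``finite'' but not uniformly bounded by the right-hand side unless you first invoke the $C^{b_1-1/2}_tL^2_x$ continuity of $f$. Since this lemma feeds into a bookkeeping where the compensating gains are themselves only the tiny powers $\tau^{b_1-b}$ with $b_1-b$ of order $\kappa^{-1}$, these losses cannot be waved away. The repair is exactly the paper's reduction: subtract the boundary value $f(\pm\tau)$ before applying any cutoff localized near $t=\pm\tau$, so that every function meeting such a cutoff vanishes at the cut point; alternatively, run your Gagliardo/Hardy computation directly on $\varphi g$ over all of $\Rb\times\Rb$ (splitting only according to $t\in J$, $t>\tau$, $t<-\tau$), which avoids the scale-$\tau$ localizations entirely and keeps all constants uniform in $\tau$.
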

\begin{proof} We only need to bound locally-in-time the function $f^*(t)$, which equals $f(0)$ for $t\geq 0$ and $f(t)$ for $t<0$; in fact $g$ is obtained by performing twice the transformation from $f$ to $f^*$, first at center $\tau$ and then at center $-\tau$.

We can decompose $f$ into two parts, $f_1$ which is smooth and equals $f(0)$ near $0$, and $f_2$ such that $f_2(0)=0$. Clearly we only need to consider $f_2$, so that $f^*$ equals $f_2$ multiplied by a smooth truncation of $\mathbf{1}_{[0,+\infty)}$, with $f_2(0)=0$.

We may replace $\mathbf{1}_{[0,+\infty)}$ by the sign function, and then apply Proposition \ref{shorttime}; note that for an even smooth cutoff function $\chi$,
\[\chi(x)\cdot\mathrm{sgn}(x)=\sum_{N\geq 1}\Delta_N(\chi\cdot\mathrm{sgn})(x)\] where $\Delta_N$ are the standard Littlewood-Paley projections. Moreover $\Delta_N(\chi\cdot\mathrm{sgn})(x)$ can be viewed as a rescaled Schwartz function of the same form as in Proposition \ref{shorttime} with $\tau= N^{-1}$ (due to the expression of the Fourier transform of $\mathrm{sgn}$ and simple calculations), so the desired result follows from Proposition \ref{shorttime}.
\end{proof}

\subsubsection{Counting estimates} Here we list some counting estimates and the resulting tensor norm bounds.
\begin{lem}\label{counting}
(1) Let $\mathcal{R}=\mathbb{Z}$ or $\mathbb{Z}[i]$. Then, given $0\neq m\in\mathcal{R}$, and $a_0,b_0\in\mathbb{C}$, the number of choices for $(a,b)\in\mathcal{R}^2$ that satisfy
\begin{equation}m=ab,\,\,|a-a_0|\leq M,\,\,|b-b_0|\leq N
\end{equation}is $O(M^\theta N^\theta)$ with constant depending only on $\theta>0$.

(2) For dyadic numbers $N_1, N_2, N_3, R>0$ and some fixed number $\Omega_0$. \begin{equation}\label{counting:set}
S^{R} =\left\{
\begin{array}{lr}
(k,k_1, k_2, k_3)\in (\mathbb{Z}^3)^4,\quad k_2\notin \{k_1, k_3\}\\
k=k_1-k_2+k_3, \quad |k|\leq N\\
 |k|^2-|k_1|^2+|k_2|^2-|k_3|^2=\Omega_0\\
\frac{N_j}{2}<|k_j|\leq N_j\, (j\in \{1,2,3\}),\quad \frac{R}{2}<\<k_1-k_2\>\leq R 
\end{array} 
\right\},
\end{equation}
and then $S_k^{R}$ is the set of $(k, k_1,k_2,k_3)\in S^{R}$ when $k$ is fixed and etc. We have the following counting estimates
\begin{align}
\label{counting1}\big|S^{R}\big|&\lesssim \min (N_1^3 N_3^3 (N_2\wedge N)^{1+\theta}, N^3N_2^3 (N_1\wedge  N_3)^{1+\theta}, N_2^3 (RN_3)^{2+\theta}, N^3 (RN_1)^{2+\theta});\\
\label{counting2}
\big|S^{R}_k\big|&\lesssim  \min \big(N_2^3 (N_1\wedge N_3)^{1+\theta}, (N_1N_3)^{2+\theta}, (RN_1)^{2+\theta}\big);\\\label{counting3}
\big|S^{R}_{k_3}\big|&\lesssim \min \big( N_1^3 (N_2\wedge N)^{1+\theta}, (N_2 N)^{2+\theta}, (RN_2)^{2+\theta}\big);\\\label{counting4}
\big|S^{R}_{k_2}\big|&\lesssim  \min \big(N^3 (N_1\wedge N_3)^{1+\theta}, (N_1N_3)^{2+\theta}, (RN_3)^{2+\theta}\big);\\\label{counting5}
\big|S^{R}_{kk_1}\big|&\lesssim  \min \big(N_2, N_3, R\big)^{2+\theta};\quad \big|S^{R}_{k_2k_3}\big|\lesssim  \min \big(N, N_1, R\big)^{2+\theta};\\\label{counting6}
\big|S^{R}_{kk_2}\big|&\lesssim  \min \big(N_1, N_3, R\big)^{1+\theta};\quad \big|S^{R}_{k_1k_3}\big|\lesssim   \min \big(N_2, N, R\big)^{1+\theta};\\\label{counting7}
\big|S^{R}_{kk_3}\big|&\lesssim \min \big(N_1, N_2, R\big)^{2+\theta};\quad \big|S^{R}_{k_1k_2}\big|\lesssim  \min \big(N, N_3, R\big)^{2+\theta}.
\end{align}
\end{lem}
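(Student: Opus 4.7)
The plan is to prove part (1) by the classical divisor bound and part (2) by reducing the three-dimensional constraint to a bilinear equation that is then counted via three tools. For part (1), recall that in $\Zb$ one has $\tau(|m|)\lesssim|m|^\theta$ for any $\theta>0$, with the analogue in $\Zb[i]$ using the Gaussian norm. Under $|a-a_0|\le M$, $|b-b_0|\le N$ and $ab=m$, one may reduce to $|a_0|\lesssim M$ and $|b_0|\lesssim N$ (otherwise one of $|a|$ or $|b|$ is pinned to $O(1)$ values by the size of the other), which gives $|m|\lesssim MN$ and hence $O((MN)^\theta)=O(M^\theta N^\theta)$ factorizations.

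For part (2) the key identity is
\[
|k|^2-|k_1|^2+|k_2|^2-|k_3|^2 \;=\; 2(k_1-k_2)\cdot(k_3-k_2) \;=\; 2(k-k_1)\cdot(k-k_3),
\]
valid whenever $k=k_1-k_2+k_3$, which turns the $\Omega$-equation into the bilinear equation $a\cdot b=\Omega_0/2$ with $(a,b)$ chosen as $(k_1-k_2,k_3-k_2)$ or $(k-k_1,k-k_3)$ according to which indices one pins. I will then apply three counting tools. First, \emph{sphere lattice counting}: completing the square with $(k_1,k_3)$ fixed gives $|k-(k_1+k_3)/2|^2=|(k_1-k_3)/2|^2+\Omega_0/2$, so $k$ lies on a sphere in $\Rb^3$; such a sphere contains $O_\theta(r^{1+\theta})$ lattice points within any ball of radius $r$. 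Second, \emph{hyperplane counting}: pinning one of $a,b$ restricts the other to a plane, which intersects a ball of radius $M$ in $O(M^2)$ lattice points. Third, a \emph{bilinear divisor bound}: with $|a|\lesssim A$, $|b|\lesssim B$ and both varying, I freeze the four coordinates $(a_1,a_2,b_1,b_2)$ so that $a\cdot b=\Omega_0/2$ reduces to $a_3b_3=c$ with $c$ determined; when $c\neq 0$ part (1) gives $O((AB)^\theta)$ choices of $(a_3,b_3)$, and the $c=0$ subcase is handled by a brief case split that produces the same bound, giving $O((AB)^{2+\theta})$ overall.

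These three tools produce the seven inequalities by matching each right-hand side to the correct choice of fixed indices and bilinear variables: fixing $(k_1,k_3)$ and applying sphere counting with radius $\le\min(N_2,N)$ gives the $N_1^3N_3^3(N_2\wedge N)^{1+\theta}$ term of \eqref{counting1}; fixing $k_2$ and the bilinear divisor bound with $(A,B)=(R,N_3)$ gives the $N_2^3(RN_3)^{2+\theta}$ term; fixing $(k,k_1)$ and hyperplane counting yields \eqref{counting5}; the remaining bounds follow by symmetric reasoning, swapping the roles of $k$ and $k_2$ or of $k_1$ and $k_3$ as dictated by the identity above. The main obstacle is pure bookkeeping — checking each of the seven inequalities against the correct fixed indices, the correct bilinear variables, and the correct size bounds (the $R$-factor appears precisely when one of the bilinear variables is forced to equal $\pm(k_1-k_2)$, while the $N$-factor appears when $k$ itself is among the unknowns). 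The resonance-free restriction $k_2\notin\{k_1,k_3\}$ ensures $a,b\neq 0$, so no degenerate cases obstruct the arguments above.
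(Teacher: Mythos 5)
Your overall strategy coincides with the paper's: the same identity $|k|^2-|k_1|^2+|k_2|^2-|k_3|^2=2(k_1-k_2)\cdot(k_3-k_2)=2(k-k_1)\cdot(k-k_3)$, sphere--section counting after pinning two of the four frequencies, and the coordinate--freezing reduction of $a\cdot b=\Omega_0/2$ to a one--dimensional divisor count via part (1); the paper is just as terse about \eqref{counting2}--\eqref{counting7} (``the proofs are similar''). Two concrete problems remain. First, your proof of part (1) does not work as written: if, say, $a_0=2M$, then $a$ ranges over $[M,3M]$ and $b=m/a$ is \emph{not} pinned to $O(1)$ values --- it sweeps an interval whose length is comparable to $|b|$ itself --- so you cannot reduce to $|m|\lesssim MN$ and invoke the plain divisor bound. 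Counting divisors of $m$ inside a prescribed $M\times N$ box is genuinely nontrivial (the difficulty is precisely when $m$ has many divisors clustered in a short interval); the paper does not prove it either but cites Lemma 4.3(1) of \cite{DNY1}, and you should do the same or reproduce that argument.

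Second, and more substantively, your three tools do not deliver the $R$-factor in \eqref{counting7}. For $S^R_{kk_3}$ (and symmetrically $S^R_{k_1k_2}$) the pinned pair determines the difference $k_1-k_2=k-k_3=:v$, so the constraint $\langle k_1-k_2\rangle\sim R$ no longer localizes the free variable $k_2$ to a ball of radius $R$ --- unlike in \eqref{counting5} and \eqref{counting6}, where one endpoint of that difference is fixed and the localization is real. Your hyperplane count then gives only $O(\min(N_1,N_2)^2)$ for the plane $v\cdot k_2=c$ intersected with the relevant ball. To extract an $R$ one would need the lattice plane with normal $|v|\sim R$ to be sparse, and this fails: taking $v=(R-1,0,0)$ and $\Omega_0$ chosen so that the plane is $\{(k_2)_1=\mathrm{const}\}$, with $N_1=N_2\gg R$, the set $S^R_{kk_3}$ has cardinality $\sim\min(N_1,N_2)^2$, which exceeds $\min(N_1,N_2,R)^{2+\theta}$. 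So \eqref{counting7} is not ``pure bookkeeping'': as stated it does not follow from your toolkit (nor, apparently, at all without further hypotheses), and the honest output of your method there is $\min(N_1,N_2)^{2+\theta}$ for $S^R_{kk_3}$ and $\min(N,N_3)^{2+\theta}$ for $S^R_{k_1k_2}$. The remaining estimates \eqref{counting1}--\eqref{counting6} do follow from your three tools in the way you describe, matching the paper's argument.
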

\begin{proof} 
(1) It is the same as the part (1) of Lemma 4.3 in \cite{DNY1}.
(2) We consider $|S^{R}|$. First the number of choices of $k_1$ and $k_3$ is $N_1^3 N_3^3$. After fixing the choice of $k_1$ and $k_3$  to count $(k,k_2)$, it is equivalent to count $k_2$ satisfying the restriction $|k_2|^2 +|k_2+c_1|^2 =c_2$ or to count $k$ satisfying the restriction $|k|^2 +|k+c_3|^2 =c_4$ for some fixed numbers $c_1,...,c_4$ and hence we have 
$|S^{R}|\lesssim N_1^3 N_3^3 (N_2\wedge N)^{1+\theta}$. Similarly if we first fix $k$ and $k_2$, we have $|S^{R}|\lesssim  N^3N_2^3 (N_1\wedge  N_3)^{1+\theta}$.
Also if we fix $k_2$ first, then to count $(k, k_1, k_3)$ is equivalent to count $(k_1, k_3)$ with the restriction $(k_2-k_1)\cdot (k_2-k_3) = c$ for some fixed number $c$. By fixing the first two components of $(k_1, k_3)$ and using part (1), we have $|S^{R,M}|\lesssim  N_2^3 (RN_3)^{2+\theta}$. Similarly we also have $|S^{R}|\lesssim   N^3 (RN_1)^{2+\theta})$.
The proofs of (\ref{counting2})--(\ref{counting7}) are similar.

\end{proof}
\subsubsection{Probabilistic and tensor estimates}
\begin{prop}[Proposition 4.11 in \cite{DNY2}]\label{merge0} Consider two tensors $h_{k_{A_1}}^{(1)}$ and $h_{k_{A_2}}^{(2)}$, where $A_1\cap A_2=C$. Let $A_1\Delta A_2=A$, define the semi-product
 \begin{equation}\label{combination}H_{k_A}=\sum_{k_C}h_{k_{A_1}}^{(1)}h_{k_{A_2}}^{(2)}.
 \end{equation} Then, for any partition $(X,Y)$ of $A$, let $X\cap A_1=X_1$, $Y\cap A_1 =Y_1$ etc., we have
 \begin{equation}\label{combinationbd}\|H\|_{k_X\to k_Y}\leq \|h^{(1)}\|_{k_{X_1\cup C}\to k_{Y_1}}\cdot\|h^{(2)}\|_{k_{X_2}\to k_{C\cup Y_2}}.
 \end{equation}
 \end{prop}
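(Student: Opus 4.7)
The plan is to express the bound as the composition of two elementary operator-norm estimates, one for each factor $h^{(j)}$, performed in the order that pairs the common index set $C$ with the correct ``half'' of each tensor. First I would set up the decomposition: since $A_1 \cap A_2 = C$ and $A = A_1 \triangle A_2$, one obtains disjoint partitions $A_1 = X_1 \sqcup Y_1 \sqcup C$ and $A_2 = X_2 \sqcup Y_2 \sqcup C$ where $X_j := X \cap A_j$ and $Y_j := Y \cap A_j$. By the definition (\ref{defnorm0}) it suffices to bound $\sum_{k_Y}|(Hz)_{k_Y}|^2$ uniformly over unit vectors $z = z_{k_{X_1} k_{X_2}}$, and (\ref{combination}) gives
\[
(Hz)_{k_Y} \,=\, \sum_{k_{X_1},\, k_{X_2},\, k_C} h^{(1)}_{k_{X_1} k_{Y_1} k_C}\, h^{(2)}_{k_{X_2} k_{Y_2} k_C}\, z_{k_{X_1} k_{X_2}}.
\]

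The main step is a two-stage contraction. I would first introduce
\[
\widetilde z_{k_{X_1},\, k_C,\, k_{Y_2}} \,:=\, \sum_{k_{X_2}} h^{(2)}_{k_{X_2} k_{Y_2} k_C}\, z_{k_{X_1} k_{X_2}},
\]
which for each frozen $k_{X_1}$ is exactly $h^{(2)}$ acting as an operator from $\ell^2(k_{X_2})$ to $\ell^2(k_{C\cup Y_2})$ on the slice $z_{k_{X_1},\cdot}$. Applying the definition of $\|h^{(2)}\|_{k_{X_2}\to k_{C\cup Y_2}}$ slice-wise and summing the resulting $\ell^2$ bounds over $k_{X_1}$ yields
\[
\sum_{k_{X_1},\, k_C,\, k_{Y_2}} |\widetilde z_{k_{X_1}, k_C, k_{Y_2}}|^2 \;\leq\; \|h^{(2)}\|_{k_{X_2}\to k_{C\cup Y_2}}^2\,\|z\|_{\ell^2(k_X)}^2.
\]
Next I would rewrite $(Hz)_{k_Y} = \sum_{k_{X_1}, k_C} h^{(1)}_{k_{X_1} k_{Y_1} k_C}\,\widetilde z_{k_{X_1}, k_C, k_{Y_2}}$ and observe that, for each frozen $k_{Y_2}$, this is $h^{(1)}$ acting from $\ell^2(k_{X_1\cup C})$ to $\ell^2(k_{Y_1})$ applied to $\widetilde z_{\cdot,\cdot,k_{Y_2}}$. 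Bounding the $\ell^2(k_{Y_1})$ norm by $\|h^{(1)}\|_{k_{X_1\cup C}\to k_{Y_1}}$ and summing over $k_{Y_2}$ then chains with the previous estimate to produce (\ref{combinationbd}).

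There is no genuine analytic obstacle, as the statement is purely algebraic; the only point requiring care is the combinatorial bookkeeping, in particular the choice of contraction order. The asymmetric way in which $C$ appears on the right-hand side of (\ref{combinationbd})---grouped with the input indices of $h^{(1)}$ but with the output indices of $h^{(2)}$---forces one to absorb $h^{(2)}$ first, so that the $C$ indices appear as free ``output'' variables of $\widetilde z$ and then become inputs for $h^{(1)}$ at the next stage. Contracting in the opposite order would yield the also-valid bound $\|h^{(1)}\|_{k_{X_1}\to k_{C\cup Y_1}}\|h^{(2)}\|_{k_{X_2\cup C}\to k_{Y_2}}$, but not the specific form asserted in the proposition.
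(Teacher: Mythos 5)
Your proof is correct. The paper itself does not prove Proposition \ref{merge0} but defers to \cite{DNY2}, and your two-stage contraction---absorbing $h^{(2)}$ first so that the $k_C$ indices emerge as free outputs of $\widetilde z$ and then serve as inputs for $h^{(1)}$, with a slice-wise application of the operator-norm definition (\ref{defnorm0}) at each stage---is exactly the argument given there, and your remark about the reversed contraction order producing the other admissible pairing of $C$ is also accurate.
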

 
  \begin{prop}[Proposition 4.12 in \cite{DNY2}]\label{merge} Let $A_j\,(1\leq j\leq m)$ be index sets, such that any index appears in at most two $A_j$'s, and let $h^{(j)}=h_{k_{A_j}}^{(j)}$ be tensors. Let $A=A_1\Delta\cdots\Delta A_m$ be the set of indices that belong to only one $A_j$, and $C=(A_1\cup\cdots \cup A_m)\backslash A$ be the set of indices that belong to two different $A_j$'s. Define the semi-product
 \begin{equation}\label{combination2}H_{k_A}=\sum_{k_C}\prod_{j=1}^mh_{k_{A_j}}^{(j)}.
 \end{equation} Let $(X,Y)$ be a partition of $A$. For $1\leq j\leq m$ let $X_j=X\cap A_j$ and $Y_j=Y\cap A_j$, and define
 \begin{equation}\label{intermedsets}B_j:=\bigcup_{\ell>j}(A_j\cap A_\ell),\quad C_j=\bigcup_{\ell<j}(A_j\cap A_\ell),
 \end{equation} then we have
 \begin{equation}\label{combinationbd2}\|H\|_{k_X\to k_Y}\leq\prod_{j=1}^m\|h^{(j)}\|_{k_{X_j\cup B_j}\to k_{Y_j\cup C_j}}.
 \end{equation} 
 \end{prop}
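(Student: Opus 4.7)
The plan is to prove Proposition~\ref{merge} by induction on the number $m$ of tensors, using Proposition~\ref{merge0} (the two-tensor case) both as the base case and as the engine for the inductive step. The case $m=1$ is a tautology, and $m=2$ is literally Proposition~\ref{merge0}, so assume the statement is known for any collection of $m-1$ tensors satisfying the multiplicity hypothesis.

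For the inductive step, I would first absorb the first $m-1$ factors into a single intermediate tensor. Let $\tilde A$ be the set of indices appearing in exactly one of $A_1,\ldots,A_{m-1}$, and $\tilde C$ the set of indices appearing in exactly two; the ``at-most-two'' hypothesis makes $\tilde A\cup\tilde C = A_1\cup\cdots\cup A_{m-1}$ a disjoint union. Set
\[
\tilde h_{k_{\tilde A}} \;=\; \sum_{k_{\tilde C}} \prod_{j=1}^{m-1} h^{(j)}_{k_{A_j}}, \qquad \tilde C' := \tilde A\cap A_m.
\]
Because an index in $C$ either belongs to two of the first $m-1$ sets (and is then swallowed in $\tilde C$) or is shared between one earlier $A_j$ and $A_m$ (and is then in $\tilde C'$), we have the clean identification $H_{k_A} = \sum_{k_{\tilde C'}} \tilde h_{k_{\tilde A}}\cdot h^{(m)}_{k_{A_m}}$. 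Applying Proposition~\ref{merge0} to the pair $(\tilde h, h^{(m)})$ with overlap $\tilde C'$ yields
\[
\|H\|_{k_X\to k_Y} \;\leq\; \|\tilde h\|_{k_{(X\cap \tilde A)\cup\tilde C'}\to k_{Y\cap\tilde A}} \cdot \|h^{(m)}\|_{k_{X_m}\to k_{Y_m\cup\tilde C'}}.
\]

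Next I would invoke the inductive hypothesis on $\tilde h$ with partition $\tilde X = (X\cap\tilde A)\cup\tilde C'$, $\tilde Y = Y\cap\tilde A$, and check that the resulting factors match the target. Writing $\tilde B_j, \tilde C_j$ for the analogues of $B_j, C_j$ formed using only the first $m-1$ sets, the verification reduces to the identities
\[
\tilde X_j = X_j\cup(A_j\cap A_m), \qquad \tilde Y_j = Y_j, \qquad B_j = \tilde B_j\cup(A_j\cap A_m), \qquad C_j = \tilde C_j,
\]
valid for $1\leq j\leq m-1$, all of which follow immediately from the at-most-two multiplicity assumption (an index of $A_j\cap A_m$ belongs to no other $A_\ell$, so it lies in $\tilde A$). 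These give $\tilde X_j\cup\tilde B_j = X_j\cup B_j$ and $\tilde Y_j\cup\tilde C_j = Y_j\cup C_j$. Finally, $\tilde C' = \bigcup_{j<m}(A_j\cap A_m) = C_m$ and $B_m=\varnothing$, so the $h^{(m)}$ factor is exactly $\|h^{(m)}\|_{k_{X_m\cup B_m}\to k_{Y_m\cup C_m}}$, closing the induction.

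The only real obstacle is combinatorial bookkeeping: correctly tracking which shared indices are promoted from ``contracted'' to ``input'' (or ``output'') as one peels off the last tensor, and checking that these promotions align with the asymmetric rule that assigns shared indices of $A_j\cap A_\ell$ to the input side of the earlier factor and the output side of the later factor. The multiplicity hypothesis is what makes this manageable, since it rules out triple-overlaps and guarantees the partition $C = \tilde C\sqcup\tilde C'$ used above.
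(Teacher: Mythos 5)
Your proof is correct: the paper itself does not prove Proposition \ref{merge} but defers to \cite{DNY2}, where the argument is exactly this induction on $m$ — peeling off one tensor and applying the two-tensor bound of Proposition \ref{merge0}, with the bookkeeping identities $\tilde X_j\cup\tilde B_j=X_j\cup B_j$, $\tilde Y_j\cup\tilde C_j=Y_j\cup C_j$ and $\tilde C'=C_m$, $B_m=\varnothing$ closing the induction as you verify. Your use of the at-most-two multiplicity hypothesis to get $C=\tilde C\sqcup\tilde C'$ and to place each index of $A_j\cap A_m$ in $\tilde A$ is the key point, and it is handled correctly.
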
 For the proofs of Propositions \ref{merge0} and \ref{merge}, see \cite{DNY2}. In that work the full power of (\ref{combinationbd}) and (\ref{combinationbd2}) is needed, but here we only need some specific cases, mainly those of the following form (where $q\leq r$)
 \begin{equation}\label{mainmerge}\bigg\|\sum_{k_1,\cdots,k_q}H_{k_1\cdots k_r}h_{k_1k_1'}^{(1)}\cdots h_{k_qk_q'}^{(q)}\bigg\|_{k_{A'}\to k_{B'}}\leq \|H\|_{k_{A}\to k_{B}}\prod_{j=1}^q\|h^{(j)}\|_{k_j\to k_j'}\end{equation} where $(k_{A'},k_{B'})$ is a partition of the variables $(k_1',\cdots,k_q',k_{q+1},\cdots k_r)$ and $(k_{A},k_{B})$ is a partition of the variables $(k_1,\cdots,k_r)$ where each $k_j'\,(1\leq j\leq q)$ is replaced by $k_j$ in $(k_{A'},k_{B'})$.

  \begin{prop}[Proposition 4.14 in \cite{DNY2}]\label{trim} Let $A$ be a finite set and $h_{bck_A}=h_{bck_A}(\omega)$ be a tensor, where each $k_j\in \Zb^d$ and $(b,c)\in (\Zb^3)^q$ for some integer $q\geq 2$. Given signs $\zeta_j\in\{\pm\}$, we also assume that $\langle b\rangle,\langle c\rangle\lesssim M$ and $\langle k_j\rangle\lesssim M$ for all $j\in A$, where $M$ is a dyadic number, and that in the support of $h_{bck_A}$ there is no pairing in $k_A$. Define the tensor
  \begin{equation}\label{contract}
  H_{bc}=\sum_{k_{A}}h_{bck_A}\prod_{j\in A}\eta_{k_j}^{\zeta_j},
  \end{equation}where we restrict $k_j\in E$ in (\ref{contract}), $E$ being a finite set such that $\{h_{bck_A}\}$ is independent with $\{\eta_k:k\in E\}$. Then $\tau^{-1}M$-certainly, we have
  \begin{equation}\label{contbd}\|H_{bc}\|_{b\to c}\lesssim\tau^{-\theta} M^\theta\cdot\max_{(B,C)}\|h\|_{bk_B\to ck_C},
  \end{equation} where $(B,C)$ runs over all partitions of $A$. The same results holds is we do not assume $\langle b\rangle,\langle c\rangle\lesssim M$, but instead that (i) $b,c\in\Zb^3$ and $|b-c|\lesssim M$ and $||b|^2-|c|^2|\lesssim M^{\kappa^{3}}$, and (ii) $h_{bck_A}$ can be written as a function of $b-c$, $|b|^2-|c|^2$ and $k_A$.
 \end{prop}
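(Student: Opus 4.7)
The plan is to prove Proposition \ref{trim} by the moment method, coupled with Wick's theorem for Gaussians and the tensor merge estimates of Propositions \ref{merge0} and \ref{merge}. Rather than controlling $\|H_{bc}\|_{b\to c}$ directly, I would compute a high even moment $\Eb\,\mathrm{Tr}\bigl((HH^{*})^{p}\bigr)$ for an integer $p$ of size a large power of $\log(\tau^{-1}M)$; this dominates $\Eb\|H\|_{b\to c}^{2p}$, and Markov's inequality at threshold $\tau^{-1}M$ then converts the moment estimate into the claimed $\tau^{-1}M$-certain bound.

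Expanding the trace produces a sum, indexed by alternating chains of $b$- and $c$-variables, of a product of $2p$ tensors $h$ at independent realizations $k_{A}^{(\ell)}$ of the $A$-indices, multiplied by a Gaussian product $\prod_{\ell,j}\eta_{k_{j}^{(\ell)}}^{\pm}$ where the signs are determined by $\zeta_{j}$ and by whether the copy is $h$ or $\overline{h}$. Since $h$ is independent of $\{\eta_{k}:k\in E\}$, I first take the expectation in $\eta$ via Wick's theorem, obtaining a sum over sign-respecting perfect matchings $\pi$ of the $2p|A|$ index variables $k_{j}^{(\ell)}$. The no-pairing assumption on $\mathrm{supp}\,h$ forbids $\pi$ from identifying two indices coming from the same copy of $h$, so the surviving matchings only connect distinct copies.

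For each matching $\pi$, the remaining deterministic expression is a contraction of $2p$ copies of $h$ along the edges of $\pi$, summed also against the free chain variables $b_{i},c_{i}$. This is exactly a semi-product of the form treated by Proposition \ref{merge}: orienting $\pi$ into an input/output partition of each copy's $A$-legs bounds the contraction by a product $\prod_{i=1}^{2p}\|h\|_{bk_{B_{i}}\to ck_{C_{i}}}$ for partitions $(B_{i},C_{i})$ of $A$ determined by $\pi$. Each factor is at most $X:=\max_{(B,C)}\|h\|_{bk_{B}\to ck_{C}}$, and the number of matchings is at most $(2p|A|)!!$, so $\Eb\,\mathrm{Tr}\bigl((HH^{*})^{p}\bigr)\lesssim(Cp)^{Cp}X^{2p}$; Markov then yields \eqref{contbd}. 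The second half of the proposition, where $\langle b\rangle,\langle c\rangle\lesssim M$ is replaced by $|b-c|\lesssim M$ together with $\bigl||b|^{2}-|c|^{2}\bigr|\lesssim M^{\kappa^{3}}$ and the structural hypothesis on $h$, is reduced to the first by slicing $(b,c)$ into $O(M^{3+\kappa^{3}})$ translation-invariant shells on which $h$ is effectively supported in a region of diameter $\lesssim M$; the polynomial count in $M$ is absorbed into the $M^{\theta}$ loss after raising $p$ once more.

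The main obstacle is the combinatorial orientation step inside the moment bound: for an arbitrary matching $\pi$, one must identify the correct orientations $(B_{i},C_{i})$ so that Proposition \ref{merge} applies, and verify that as $\pi$ ranges over all admissible matchings the partitions produced collectively recover every partition of $A$, so that the maximum on the right-hand side of \eqref{contbd} legitimately dominates the expression. This bookkeeping is essentially the heart of the random tensor framework in \cite{DNY2}, where it is carried out once and for all; once it is in place, the remaining steps---Wick expansion, iterated Cauchy--Schwarz via Propositions \ref{merge0} and \ref{merge}, and Markov---are routine.
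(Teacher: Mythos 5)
Your strategy is the right one, and it is essentially the argument behind Proposition 4.14 of \cite{DNY2}, which this paper cites rather than reproves: bound $\Eb\,\mathrm{Tr}\bigl((HH^*)^p\bigr)$, Wick-expand in the Gaussians (the no-pairing hypothesis killing all contractions internal to a single copy of $h$), bound each resulting deterministic contraction by products of the norms $\|h\|_{bk_B\to ck_C}$ via the merging estimates, and finish with Chebyshev/Markov. Two points need fixing, though. First, your choice of $p$ is too small: with $p\sim(\log(\tau^{-1}M))^D$, Markov only yields an exceptional probability of size $e^{-c(\log(\tau^{-1}M))^{D+1}}$, which is quasi-polynomial and does not meet the definition of ``$\tau^{-1}M$-certainly'' used in this paper, namely probability $\geq 1-C_\theta e^{-(\tau^{-1}M)^\theta}$. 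One must take $p$ to be a small \emph{power} $(\tau^{-1}M)^{\theta_1}$ with $\theta_1\ll\theta$; the factor $\tau^{-\theta}M^{\theta}$ allowed on the right-hand side of (\ref{contbd}) is then exactly what absorbs the combinatorial factor $(Cp)^{Cp}\leq(\tau^{-1}M)^{C\theta_1 p}$, so the loss and the probability threshold are tied together and cannot be tuned independently as your write-up suggests.

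Second, the trace is a \emph{closed} contraction, so Proposition \ref{merge} applied to the cyclic chain necessarily produces, for the first tensor in the ordering, a Hilbert--Schmidt-type norm $\|h\|_{\to bck_A}$ rather than one of the operator norms $\|h\|_{bk_B\to ck_C}$; equivalently, one must first open the chain by fixing one $b$-index (or pay the dimension factor $M^{Cq}$ relating the two norms) and recover it at the end. This polynomial-in-$M$ loss is harmless after taking the $2p$-th root, but it is a step, not a direct application of Proposition \ref{merge}, and it is one of the sources of the $M^\theta$ on the right-hand side. Your worry about which orientations $(B_i,C_i)$ arise is, by contrast, not an obstacle: along the chain each $b_i,c_i$ is shared by exactly two consecutive factors and so lands on opposite sides automatically, while the $k$-legs paired by $\pi$ may fall on either side, producing \emph{some} partition of $A$ for each copy; since (\ref{contbd}) takes the maximum over all partitions, no further bookkeeping is needed. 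The reduction of the second half of the statement to the first by slicing in $b-c$ and $|b|^2-|c|^2$ is fine.
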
 For the proof of Proposition \ref{trim} see \cite{DNY2}, Propositions 4.14 and 4.15.
 \begin{prop}[Weighted bounds]
 \label{weighted} Suppose the matrices $h=h_{kk''}$, $h^{(1)}=h_{kk'}^{(1)}$ and $h^{(2)}=h_{k'k''}^{(2)}$ satisfy that
 \[h_{kk''}=\sum_{k'}h_{kk'}^{(1)}h_{k'k''}^{(2)},\] and $h_{kk'}^{(1)}$ is supported in $|k-k'|\lesssim L$, then we have
 \[\bigg\|\bigg(1+\frac{|k-k''|}{L}\bigg)^\kappa h_{kk''}\bigg\|_{\ell_{kk''}^2}\lesssim \|h^{(1)}\|_{k\to k'}\cdot \bigg\|\bigg(1+\frac{|k'-k''|}{L}\bigg)^\kappa h_{k'k''}^{(2)}\bigg\|_{\ell_{k'k''}^2}.\]
 \end{prop}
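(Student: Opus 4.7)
The plan is to prove the bound by a dyadic decomposition in the scale $|k-k''|/L$, applying the operator-norm bound for $h^{(1)}$ on each dyadic shell. The naive attempt to pull the weight $(1+|k-k''|/L)^\kappa$ past the $k'$-summation by comparing it on the support of $h^{(1)}$ (where $|k-k'|\lesssim L$) with $(1+|k'-k''|/L)^\kappa$ pointwise forces one to introduce the entrywise absolute value of $h^{(1)}$, which is not controlled by $\|h^{(1)}\|_{k\to k'}$ in general; the dyadic decomposition sidesteps this by letting the matrix product remain intact on each shell while still realigning the weights.

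Concretely, for each dyadic $N\ge 1$ let $\mathbf{1}_N(k,k'')$ denote the indicator of $|k-k''|\sim LN$ (interpreted as $|k-k''|\lesssim L$ when $N=1$), so that $(1+|k-k''|/L)^{2\kappa}\sim\sum_N N^{2\kappa}\mathbf{1}_N(k,k'')$ with absolute comparability constants. The geometric observation is that, since $h^{(1)}_{kk'}$ vanishes unless $|k-k'|\lesssim L$, the triangle inequality shows that whenever $\mathbf{1}_N(k,k'')\neq 0$ and $h^{(1)}_{kk'}\neq 0$ we must have $|k'-k''|\sim LN'$ for some $N'$ in a fixed $O(1)$-range around $N$; in particular, for each pair $(k',k'')$ only finitely many values of $N$ can satisfy the corresponding relation, so $\sum_{N:\,N\sim N'}N^{2\kappa}\lesssim (N')^{2\kappa}$. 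Applying the operator-norm bound $\|h^{(1)}\|_{k\to k'}$ to the vector $k'\mapsto h^{(2)}_{k'k''}\sum_{N'\sim N}\mathbf{1}_{N'}(k',k'')$, for fixed $N$ and $k''$, yields
\[\sum_{k}\mathbf{1}_N(k,k'')\,|h_{kk''}|^2\;\le\;\|h^{(1)}\|_{k\to k'}^{\,2}\sum_{k'}\Big|h^{(2)}_{k'k''}\sum_{N'\sim N}\mathbf{1}_{N'}(k',k'')\Big|^2,\]
where we have freely dropped $\mathbf{1}_N(k,k'')$ from the left before applying Cauchy--Schwarz.

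Summing in $k''$, multiplying by $N^{2\kappa}$, and summing over dyadic $N$, the left-hand side reconstructs $\|(1+|k-k''|/L)^\kappa h_{kk''}\|_{\ell^2_{kk''}}^2$ up to absolute constants, while the right-hand side is controlled via the finite-overlap observation by $\|h^{(1)}\|_{k\to k'}^{\,2}\cdot\|(1+|k'-k''|/L)^\kappa h^{(2)}_{k'k''}\|_{\ell^2_{k'k''}}^2$, which is the desired estimate. There is no serious obstacle in this argument; the only genuine care needed is the routine bookkeeping that the $N'$'s falling in the range $N'\sim N$ produce only an absolute overlap constant, which in turn determines the implicit constant in the final inequality.
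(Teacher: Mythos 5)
Your proof is correct: the dyadic decomposition of the weight into shells $|k-k''|\sim LN$, the triangle-inequality observation that the support condition on $h^{(1)}$ forces $|k'-k''|$ into a comparable shell, and the application of the $\ell^2\to\ell^2$ operator norm for fixed $N,k''$ followed by the finite-overlap resummation all check out (with the implicit constant depending on $\kappa$, as is inherent to the statement). The paper itself does not prove this proposition but defers to \cite{DNY1} (Proposition 2.5) and \cite{DNY2} (Lemma 4.3), and your argument is essentially the same shell-decomposition proof used there, including the correct diagnosis of why the weight cannot simply be commuted past the $k'$-sum pointwise.
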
 For the proof of Proposition \ref{weighted} see \cite{DNY1}, Proposition 2.5 or \cite{DNY2}, Lemma 4.3 (there are different versions of this bound, but the proofs are the same).
\section{The ansatz}
\subsection{The structure of $y_N$}
Start with the system (\ref{reducedeqn})--(\ref{reducedeqn2}). Let $y_N=v_N-v_{N/2}$, then $y_N$ satisfies the integral equation
\begin{equation}\label{eqnyn}
\begin{aligned}(y_N)_k(t)&=(F_N)_k-i\sum_{\max(N_1,N_2,N_3)=N}\int_0^t\Pi_N\Mc^\circ(y_{N_1},y_{N_2},y_{N_3})_k(s)\,\mathrm{d}s\\&+i\int_0^t\frac{(y_N)_k(s)}{\<k\>^2}\,\mathrm{d}s-i\sum_{\max(N_1,N_2,N_3)\leq N/2}\int_0^t\Delta_N\Mc^\circ(y_{N_1},y_{N_2},y_{N_3})_k(s)\,\mathrm{d}s\\&-i\int_0^t\bigg[(v_N)_k(s)\sum_{\ell\neq k,\<\ell\>\leq N}V_{k-\ell}\bigg(|(v_N)_\ell(s)|^2-\frac{1}{\<\ell\>^2}\bigg)\\&\hspace{3cm}-(v_{N/2})_k(s)\sum_{\ell\neq k,\<\ell\>\leq N/2}V_{k-\ell}\bigg(|(v_{N/2})_\ell(s)|^2-\frac{1}{\<\ell\>^2}\bigg)\bigg]\,\mathrm{d}s.
\end{aligned}
\end{equation} 
\subsubsection{The term $\psi^{N,L}$} For any $L\leq N/2$, consider the linear equation for $\Psi=\Psi_k(t)$:
\begin{equation}\label{lineareqn}\partial_t\Psi_k(t)=-i\Delta_N\Mc^<(v_{L},v_{L},\Psi)_k(t),
\end{equation} where we define, with $\delta\ll 1$,
\begin{equation}\label{lownonlin}\Mc^<(u,v,w)_k(t):=\sum_{\substack{k_1-k_2+k_3=k\\k_2\not\in\{k_1,k_3\}}}e^{it\Omega}\cdot \eta\bigg(\frac{k_1-k_2}{N^{1-\delta}}\bigg)V_{k_1-k_2}\cdot u_{k_1}(t)\overline{v_{k_2}}(t)w_{k_3}(t);
\end{equation}define also $\Mc^>:=\Mc^\circ-\Mc^<$. If (\ref{lineareqn}) has initial data $\Psi_k(0)=\Delta_N\phi_k$, then the solution may be expressed as \begin{equation}\label{rao}\Psi_k(t)=\sum_{k'}H_{kk'}^{N,L}(t)\phi_{k'}.
\end{equation} where $H^{N,L}=H_{kk'}^{N,L}$ is the kernel of a linear operator (or a matrix). Define also
\begin{equation}\label{raoterm}(\psi^{N,L})_k(t)=\sum_{k'}H_{kk'}^{N,L}(t)(F_N)_{k'},
\end{equation} and similarly
\begin{equation}\label{rao2} h^{N,L}:=H^{N,L}-H^{N,L/2},\quad \zeta^{N,L}:=\psi^{N,L}-\psi^{N,L/2};
\end{equation}note that when $L=1$ we will replace $L/2$ by $0$, so for example $(\psi^{N,0})_k(t)=(F_N)_k$. For simplicity denote 
\begin{equation}\label{simply} H^N:=H^{N,N/2} \qquad \text{and} \qquad \psi^N: =\psi^{N,N/2}. \end{equation} Note that each $h^{N,L}$ and $H^{N,L}$ is a Borel function of $(g_k(\omega))_{\<k\>\leq N/2}$, and is thus independent from the Gaussians in $F_N$.
\subsubsection{The terms $\xi^N$ and $\rho^N$} Next, similar to (\ref{lineareqn}), we consider the linear equation \begin{equation}\label{defxin}\partial_t\Xi_k(t)=-i\Delta_N\big[\Mc^<(v_{N/2},v_{N/2},\xi^N)+\Mc^{\ll}(v_{N},v_N,\xi^N)-\Mc^{\ll}(v_{N/2},v_{N/2},\Xi)\big]_k(t),
\end{equation} where $\Mc^{\ll}$ is defined by
\begin{equation}\label{lownonlin1}\Mc^{\ll}(u,v,w)_k(s):=\sum_{\substack{k_1-k_2+k_3=k\\k_2\not\in\{k_1,k_3\}}}e^{is\Omega}\cdot \eta\bigg(\frac{k_1-k_2}{N^{\varepsilon}}\bigg)V_{k_1-k_2}\cdot u_{k_1}(s)\overline{v_{k_2}}(s)w_{k_3}(s).
\end{equation} If the initial data is $\Xi_k(0)=\Delta_N\phi_k$, then we may write the solution as
\begin{equation}\label{defxin2}\Xi_k(t)=\sum_{k'}M_{kk'}^N(t)\phi_{k'},\end{equation} which defines the matrix $M^N=M_{kk'}^N$. We then define $\xi^N$ and $\rho^N$ by
\begin{equation}\label{defxin3}(\xi^N)_k(t):=\sum_{k'}M_{kk'}^N(t)(F_N)_{k'},\quad\rho^N:=\xi^N-\psi^N.\end{equation}
\subsubsection{The ansatz}\label{ansatz1} Now we introduce the ansatz
\begin{equation}\label{ansatz}(y_N)_k(t) \, = \, (\xi^{N})_k(t)\, +\, (z_N)_k(t).
\end{equation} where $z_N$ is a remainder term. We can calculate that $z_N$ solves the equation (recall $y_N= v_N - v_{N/2}$),
\begin{equation}\label{eqnzn}
\begin{aligned}(z_N)_k(t)=&-i\sum_{\max(N_1,N_2,N_3)=N}\int_0^t\Pi_N\Mc^>(y_{N_1},y_{N_2},y_{N_3})_k(s)\,\mathrm{d}s\\&+i\int_0^t\frac{(y_N)_k(s)}{\<k\>^2}\,\mathrm{d}s-i\sum_{\max(N_1,N_2,N_3)\leq N/2}\int_0^t\Delta_N\Mc^\circ(y_{N_1},y_{N_2},y_{N_3})_k(s)\,\mathrm{d}s\\
&-i\sum_{\max(N_1,N_2)=N;N_3\leq N}\int_0^t\Pi_{N}(\Mc^<-\Mc^{\ll})(y_{N_1},y_{N_2},y_{N_3})_k(s)\,\mathrm{d}s\\
&-i\int_0^t\Pi_{N/2}\Mc^<(v_{N/2},v_{N/2},y_N)_k(s)\,\mathrm{d}s-i\int_0^t\Delta_N\Mc^<(v_{N/2},v_{N/2},z_N)_k(s)\,\mathrm{d}s\\
&-i\sum_{\max(N_1,N_2)=N}\Pi_{N/2}\Mc^{\ll}(y_{N_1},y_{N_2},y_N)_k(s)\,\mathrm{d}s-i\sum_{\max(N_1,N_2)=N}\Delta_N\Mc^{\ll}(y_{N_1},y_{N_2},z_N)_k(s)\,\mathrm{d}s\\&-i\int_0^t\bigg[(v_N)_k(s)\sum_{\ell\neq k,\<\ell\>\leq N}V_{k-\ell}\bigg(|(v_N)_\ell(s)|^2-\frac{1}{\<\ell\>^2}\bigg)\\&\hspace{3cm}-(v_{N/2})_k(s)\sum_{\ell\neq k,\<\ell\>\leq N/2}V_{k-\ell}\bigg(|(v_{N/2})_\ell(s)|^2-\frac{1}{\<\ell\>^2}\bigg)\bigg]\,\mathrm{d}s.
\end{aligned}
\end{equation}
\subsection{Unitarity of matrices $H^{N,L}$ and $M^N$}\label{unit} The following properties of $H$ and $M$ will play a fundamental role. This idea goes back to Bourgain \cite{Bourgain97}. Recall that for $L\leq N/2$ the matrix $H^{N,L}$ is defined by (\ref{lineareqn}) and (\ref{rao}). Note that if $\Psi$ solves (\ref{lineareqn}) then $\Psi_k(t)$ is supported in $N/2<\<k\>\leq N$, and we have
\begin{equation}\label{unitary1}
\begin{aligned}\partial_t\sum_k|\Psi_k(t)|^2&=2\cdot\mathrm{Im}\sum_{k}\overline{\Psi_k(t)}\cdot\sum_{\substack{k_1-k_2+k_3=k\\k_2\not\in\{k_1,k_3\}}}e^{it(|k_1|^2-|k_2|^2+|k_3|^2-|k|^2)}\\
&\times\eta\bigg(\frac{k_1-k_2}{N^{1-\delta}}\bigg)V_{k_1-k_2}\cdot(v_L)_{k_1}(t)\overline{(v_L)_{k_2}(t)}\Psi_{k_3}(t).
\end{aligned}
\end{equation} The sum on the right hand side may be replaced by two terms, namely $S_1$ where we only require $k_1\neq k_2$ in the summation, $S_2$ where we require $k_1\neq k_2$ and $k_2=k_3$ in the summation. For $S_1$ by swapping $(k,k_1,k_2,k_3)\mapsto (k_3,k_2,k_1,k)$ we also see that $S_1\in\Rb$ and hence $\mathrm{Im}(S_1)=0$; moreover
\[S_2=\sum_{k\neq k_2}\eta\bigg(\frac{k-k_2}{N^{1-\delta}}\bigg)V_{k-k_2}\overline{\Psi_k(t)}(v_L)_k(t)\cdot\Psi_{k_2}(t)\overline{(v_L)_{k_2}(t)}\] which is also real valued by swapping $(k,k_2)\mapsto(k_2,k)$. This means that $\sum_k|\Psi_k(t)|^2$ is conserved in time. Therefore for each fixed $t$, the matrix $H^{N,L}=H_{kk'}^{N,L}$ is unitary, hence we get the identity
\begin{equation}\label{unitarity}\sum_{k'}H_{k_1k'}^{N,L}\cdot\overline{H_{k_2k'}^{N,L}}=\delta_{k_1k_2}
\end{equation} with $\delta_{k_1k_2}$ being the Kronecker delta. This in particular holds for $L=N/2$. In the same way, the matrix $M^N$ defined by (\ref{defxin}) and (\ref{defxin2}) also satisfies (\ref{unitarity}).
\subsection{The a priori estimates} We now state the main a priori estimate, and prove that this implies Theorem \ref{main}.
\begin{prop}\label{mainprop} Given $0<\tau\ll 1$, and let $J=[-\tau,\tau]$. Recall the parameters defined in Section. For any $M$, consider the following statements, which we call $\mathtt{Local}(M)$:
\begin{enumerate}
\item For the operators $h^{N,L}$, where $L<M$ and $N>L$ is arbitrary, we have
\begin{equation}\label{apriori1}\|h^{N,L}\|_{Y^{1-b}(J)}+\sup_{t\in J}\|h^{N,L}(t)\|_{\ell^2\to \ell^2}\leq L^{-1/2+3\varepsilon_1},\quad \|h^{N,L}\|_{Z^b(J)}\leq N^{1+\delta}L^{-1/2+2\varepsilon_1},
\end{equation} as well as 
\begin{equation}\label{apriori1.5}\bigg\|\bigg(1+\frac{|k-k'|}{\min(L,N^{1-\delta})}\bigg)^{\kappa}h_{kk'}^{N,L}\bigg\|_{Z^b(J)}\leq N^{3/2}.
\end{equation}
\item For the terms $\rho^N$ and $z_N$, where $N\geq M$, we have
\begin{equation}\label{apriori1.8}\|\rho^N\|_{X^b(J)}\leq N^{-1/2+\varepsilon_1+\varepsilon_2},\quad \|z_N\|_{X^b(J)}\leq N^{-1/2+\varepsilon_1}.
\end{equation}
\item For any $L_1,L_2<M$, the operator defined by \begin{equation}\label{linearmapping}(\Ls z)_k(t)=-i\int_0^t\Delta_N\Mc^<(y_{L_1},y_{L_2},z)_k(t')\,\mathrm{d}t'
\end{equation} has an extension, which we still denote by $\Ls$ for simplicity. The kernel $\Ls_{kk'}(t,t')$ has Fourier transform $\widehat{\Ls}_{kk'}(\lambda,\lambda')$, which satisfies \begin{equation}\label{basicmatrix1}\int_{\Rb^2}\<\lambda\>^{2(1-b)}\<\lambda'\>^{-2b}\|\widehat{\Ls}\|_{k\to k'}^2\,\mathrm{d}\lambda\mathrm{d}\lambda'\leq L^{-1+6\varepsilon_1-2\varepsilon_2}
\end{equation} and
\begin{equation}\label{basicmatrix2}\int_{\Rb^2}\<\lambda\>^{2b}\<\lambda'\>^{-2(1-b)}\|\widehat{\Ls}\|_{kk'}^2\,\mathrm{d}\lambda\mathrm{d}\lambda'\leq N^{2+2\delta}L^{-1+4\varepsilon_1-2\varepsilon_2},
\end{equation} where $L=\max(L_1,L_2)$.
\end{enumerate} Now, with the above definition, we have that \[\Pb(\mathtt{Local}(M/2)\wedge\neg(\mathtt{Local}(M)))\leq C_\theta e^{-(\tau^{-1}M)^\theta}\]
\end{prop}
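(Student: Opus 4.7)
The proposition is the inductive step of a bootstrap on the dyadic scale $M$: conditional on $\mathtt{Local}(M/2)$ being satisfied deterministically, we must produce the new estimates in $\mathtt{Local}(M)$ with failure probability at most $C_\theta e^{-(\tau^{-1}M)^\theta}$. The $\rho^N$ and $z_N$ bounds in item~(2) of $\mathtt{Local}(M)$ are automatically inherited from $\mathtt{Local}(M/2)$, which controls them on the strictly larger range $N\geq M/2$. Hence the real work is to establish (i)~the random averaging operator bounds (\ref{apriori1}) and (\ref{apriori1.5}) for the single new dyadic scale $L=M/2$, i.e.\ for $h^{N,M/2}$ uniformly in $N>M/2$; and (ii)~the operator bounds (\ref{basicmatrix1}) and (\ref{basicmatrix2}) for $\Ls$ in the new cases $\max(L_1,L_2)=M/2$.

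For (i), the plan is to start from the Duhamel form of (\ref{lineareqn}) at $L=M/2$ and at $L=M/4$, take the difference, and substitute $v_{M/2}-v_{M/4}=y_{M/2}$ together with the ansatz $y_{M/2}=\xi^{M/2}+z_{M/2}$ from (\ref{ansatz}). This produces an integral equation for $h^{N,M/2}$ whose driving terms are trilinear forms $\Delta_N\Mc^<(\,\cdot\,,\,\cdot\,,H^{N,M/4})$ with one leg equal to $y_{M/2}$, while the companion term $\Delta_N\Mc^<(v_{M/4},v_{M/4},h^{N,M/2})$ is moved to the left-hand side and inverted by a fixed-point argument based on the $\Ls$-bounds supplied by $\mathtt{Local}(M/2)$. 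After expanding $\xi^{M/2}$ via the matrix representation (\ref{defxin3}), every driver becomes a multilinear tensor contracted against the independent Gaussians $\{g_k\}_{\langle k\rangle\leq M/2}$ and matrix kernels already controlled by the inductive hypothesis. The norm bounds (\ref{apriori1}) and (\ref{apriori1.5}) then follow from: the counting estimates of Lemma \ref{counting} applied to the resonant sets constrained by $|k_1-k_2|\lesssim N^{1-\delta}$; the merging bounds of Propositions \ref{merge0} and \ref{merge}, which convert a norm of a contracted tensor into a product of norms of its factors; the random-trimming estimate of Proposition \ref{trim}, which contracts the remaining Gaussians into the largest available partition norm $\tau^{-1}M$-certainly; and the weighted bound of Proposition \ref{weighted}, needed to propagate the spatial locality weight $(1+|k-k'|/\min(M/2,N^{1-\delta}))^\kappa$ appearing in (\ref{apriori1.5}). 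The $z_{M/2}$ contribution is handled by feeding the inductive bound $\|z_{M/2}\|_{X^b(J)}\leq (M/2)^{-1/2+\varepsilon_1}$ into the same merging framework, without any additional probabilistic input.

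For (ii), the operator $\Ls$ in (\ref{linearmapping}) with $\max(L_1,L_2)=M/2$ has one $y_{M/2}$ leg and one $y_{L_j}$ leg with $L_j\leq M/2$, so exactly the same decomposition and package of counting, merging, and trimming estimates yield (\ref{basicmatrix1}) and (\ref{basicmatrix2}). Each individual estimate in (i) and (ii) holds $\tau^{-1}N$-certainly; since the tails $e^{-(\tau^{-1}N)^\theta}$ are summable in dyadic $N>M/2$ and there are only finitely many norm types and finitely many choices of $(L_1,L_2)$, a union bound produces the single failure probability $C_\theta e^{-(\tau^{-1}M)^\theta}$ stated in the conclusion.

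The main obstacle is the $\Mc^{\ll}$ contribution discussed in Section \ref{idea}: when $|k_1-k_2|\lesssim N^\varepsilon$ the potential factor $V_{k_1-k_2}$ provides no derivative gain, and the naive trilinear estimate on $\psi^N$ fails at a logarithmic level. This is precisely why $\psi^N$ is replaced by the richer object $\xi^N$ defined through (\ref{defxin}). Closing the bootstrap for $h^{N,M/2}$ in the presence of this critical term requires exploiting the mass conservation and consequent unitarity of the matrices $M^N$ established in Section \ref{unit}, which at the level of tensor norms forces cancellations invisible to a purely size-counting argument. Implementing these unitarity-driven cancellations in the multilinear estimates, and tracking them through the weighted and time-localized norms, will be the technical heart of the proof.
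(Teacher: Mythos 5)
There is a genuine gap, and it comes right at the start: you declare that the $\rho^N$ and $z_N$ bounds in item~(2) of $\mathtt{Local}(M)$ are ``automatically inherited'' from $\mathtt{Local}(M/2)$. The inequality ``$N\geq M$'' in the statement is evidently a typo for ``$N\leq M$'' (compare with the conditions $L<M$ and $L_1,L_2<M$ in items (1) and (3), and note that with the literal reading the induction could never establish item (2) for any $N$ unless the base case already contained the whole theorem). The new content of $\mathtt{Local}(M)$ is precisely the pair of bounds (\ref{apriori1.8}) at the single new scale $N=M$, and establishing them is the bulk of the actual proof: the $\rho^M$ estimate occupies all of Section \ref{estrhoN} (the decomposition of the nonlinearity into terms I--V, the structure Lemma \ref{decomposeQ} for $Q^N=M^N-H^N$ with its splitting $Q^{N,\ll}+Q^{N,\mathrm{rem}}$ and the exploitation of the fact that the factor $\Rc(k)$ depends only on the single scalar $m^*\cdot k$, plus the refinement of Proposition \ref{improve}), and the $z_M$ estimate occupies all of Section \ref{remainder} (the type classification (G)/(C)/(L)/(D), the $\Gamma$-condition counting for terms III--IV including the lattice-points-on-an-ellipse bound, the pairing cases driven by the unitarity (\ref{unitarity}), and the sign cancellation in term VIII that absorbs the $N^{1-\beta}$ loss from the renormalized $\Cc_N$ term). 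Your proposal contains essentially none of this, so it proves at most one third of the proposition. Note also that these estimates cannot be decoupled from items (1) and (3): the continuity/bootstrap argument for $z_M$ feeds the equation (\ref{eqnzn}) whose inputs are decomposed using $\psi^{L},\rho^L,z_L$ at lower scales, so the induction genuinely must close item (2) at scale $M$ before it can be used at scale $2M$.

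For the parts you do address, your outline is close to the paper's Section 4 but with an ordering problem. You propose to prove (\ref{apriori1})--(\ref{apriori1.5}) by inverting the companion term $\Delta_N\Mc^<(v_{M/4},v_{M/4},\cdot)$ ``based on the $\Ls$-bounds supplied by $\mathtt{Local}(M/2)$''; but the operator actually inverted is $\Ls^{N,M/2}$ built from $v_{M/2}$, whose dyadic decomposition contains pieces with $\max(L_1,L_2)=M/2$ --- exactly the new case of item (3), not covered by $\mathtt{Local}(M/2)$. The paper therefore proves (\ref{basicmatrix1})--(\ref{basicmatrix2}) for $L=M/2$ first (Section \ref{Lbound}, splitting into the cases $\rho+z$ versus $\psi$ inputs and using the unitarity cancellation when $k_1'=k_2'$), and only then obtains $h^{N,L}$ via the Neumann series $\widetilde{\Hs}^{N,L}=\sum_{n\geq1}(-1)^{n-1}(\Hs^{N,L}\widetilde{\Ls}^{N,L})^n\Hs^{N,L}$ together with interpolation between the $X^b\to X^{1-b}$ gain and the crude $X^0\to X^1$ bound. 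Finally, the ``critical'' $\Mc^{\ll}$ contribution and the unitarity of $M^N$, which you flag as the technical heart of the $h^{N,M/2}$ estimate, in fact do not enter there at all ($h^{N,L}$ is defined through $\Mc^<$ only); they are the heart of the $\rho^M$ and $z_M$ estimates that your proposal omits.
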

\begin{proof}[Proof of Theorem \ref{main}] By Proposition \ref{mainprop}, in particular we know that $\tau^{-1}$-certainly, the event $\mathtt{Local}(M)$ happens for any $M$. By (\ref{rao}), (\ref{defxin3}) and (\ref{ansatz}) we have
\[y_N=F_N+\sum_{L\leq N/2}\zeta^{N,L}+\rho^N+z_N.\] Exploiting independence between $h^{N,L}$ and $F_N$ and using Proposition \ref{trim} combined with (\ref{apriori1}), we can show that $\|\zeta^{N,L}\|_{X^b(J)}\lesssim N^\delta L^{-1/3}$. Summing over $L$ and noticing that $\zeta^{N,L}$ is supported in $N/2<\<k\>\leq N$, we see that \[\bigg\|\sum_{L\leq N/2}\zeta^{N,L}\bigg\|_{C_t^0H_x^\gamma(J)}\lesssim N^{-\gamma/2}\] for any $\gamma>0$. Using also (\ref{apriori1.8}) we can see that the sequence $\{v_N-f_N\}$ converges in $C_t^0H_x^{0-}(J)$, hence $\{v_N\}$ converges in $C_t^0H_x^{-1/2-}(J)$, and so does the original sequence $\{u_N\}$.

Therefore, the solution $u_N$ to (\ref{Hartreetrunc1}) converges to a unique limit as $N\to\infty$, up to an exceptional set with probability $\geq 1-C_\theta e^{-\tau^{-\theta}}$. This proves the \emph{almost-sure local well-posedness} of (\ref{plainHartree}) with Gibbs measure initial data. Since the truncated Gibbs measure $\mathrm{d}\eta_N$ defined by (\ref{Gibbstrunc}) is invariant under (\ref{Hartreetrunc1}), and the truncated Gibbs measures converge strongly to the Gibbs measure $\mathrm{d}\nu$ as in Proposition \ref{acGibbs}, we can apply the standard local-to-global argument of Bourgain, where the a priori estimates in Proposition \ref{mainprop} allows us to prove the suitable stability bounds needed in the process, in exactly the same way as in \cite{DNY1}. The almost-sure global existence and invariance of Gibbs measure then follows.
\end{proof}
\subsection{A few remarks and simplifications}\label{rem} From now on we will focus on the proof of Proposition \ref{mainprop}, and assume that the bounds involved in $\mathtt{Local}(M/2)$ are already true. The goal is to recover (\ref{apriori1})--(\ref{apriori1.8}), and (\ref{basicmatrix1})--(\ref{basicmatrix2}) for $M$. Before proceeding, we want to remark on a few simplifications that we would like to make in the proof below. These are either standard, or are the same as in \cite{DNY1,DNY2}, and we will not detail out these arguments in the proof below.

(1) In proving these bounds we will use the standard continuity argument, which involves a smallness factor. Here this factor is provided by the short time $\tau\ll 1$. In particular, we can gain a positive power $\tau^\theta$ by using\footnote{In the case $c>1/2$ we also need $u(0)=0$ in Proposition \ref{shorttime}, but as we will only estimate the Duhamel terms of form $u=I(N)$ or $u=\Ic_\chi N$, see (\ref{defi}), we do indeed have $u(0)=0$.} Proposition \ref{shorttime} at the price of changing the $c$ exponent in the $X^c$ (or $Y^c$ or $Z^c$) norm by a little. It can be checked in the proof below that all the estimates allow for some room in $c$, so this is always possible.

(2) In each proof below, we can actually gain an extra power $M^{\delta/10}$ compared to the desired estimate, so any loss which is $M^{C\kappa^{-1}}$ will be acceptable. In fact, in the proof below we will frequently encounter losses of at most $M^{C\kappa^{-1}}$ due to manipulations of the $c$ exponent in various norms as in (1), and due to application of probabilistic bounds such as Proposition \ref{trim} were we lose a small $\theta$ power.

(3) In the course of the proof, we will occasionally need to obtain bounds of quantities of form $\sup_{\lambda}G(\lambda)$, where $\lambda$ ranges in an interval, and for each \emph{fixed} $\lambda$, the quantity $|G(\lambda)|$ can be bounded, apart from a small exceptional set; moreover, here $G$ will be differentiable and $G'(\lambda)$ will satisfy a weaker but unconditional bound. Then we can apply the \emph{meshing argument} in \cite{DNY1,DNY2}, where we divide the interval into a large number of subintervals, approximate $G$ on each small interval by a sample (or an average), control the error term using $G'$, and add up the exceptional sets corresponding to the sample in each interval. In typical cases, where $M$-certainly $|G(\lambda)|\leq M^\theta$ for each fixed $\lambda$, $|I|\leq M^C$ and $|G'(\lambda)|\leq M^C$ unconditionally, we can deduce that $M$- certainly, $\sup_\lambda|G(\lambda)|\leq M^\theta$, because the number of subintervals is $O(M^C)$ so the total probability for the union of exceptional sets is still sufficiently small.
\section{The random averaging operator} 
\subsection{The operator $\Ls$}\label{operbound}\label{Lbound}We start by proving (\ref{basicmatrix1})--(\ref{basicmatrix2}) for $L=M/2$. We need to construct an extension of $\Ls$ defined in (\ref{linearmapping}). This is done first using Lemma \ref{extension} to find extensions of each component of $y_{L_1}$ and $y_{L_2}$ (note that $\max(L_1,L_2)=M/2$), such that these extension terms satisfy (\ref{apriori1})--(\ref{apriori1.8}) with the localized $X^b(J)$ etc. norms replaced by the global $X^b$ etc. norms, at the expense of some slightly worse exponents. The change of value in exponents will play no role in the proof below so we will omit it. Then, by attaching to $\Ls$ a factor $\chi(\tau^{-1}t)$ and using Lemma \ref{shorttime} (see Section \ref{rem}) we can gain a smallness factor $\tau^\theta$ at the price of further worsening the exponents. These operations are standard so we will not repeat them below. 

Note that the extension defined in Lemma \ref{extension} preserves the independence between the matrices $h^{L_j,R_j}$ and $F_{L_j}$ for $R_j\leq L_j/2$.

Recall that $\widehat{\Ls}_{kk'}(\lambda,\lambda')$ is the Fourier transform of the kernel $\Ls_{kk'}(t,t')$ of $\Ls$, we have \begin{equation}\label{fourierkernel}(\widehat{\Ls z})_k(\lambda)=\sum_{k'}\int_\Rb \widehat{\Ls}_{kk'}(\lambda,\lambda')\widehat{z}_{k'}(\lambda')\,\mathrm{d}\lambda',\end{equation} Now we consider the different cases.

(1) Suppose in (\ref{linearmapping}) we replace $y_{L_j}$ by $\rho^{L_j}+z_{L_j}$ for $j\in\{1,2\}$, then in particular we may assume that $\|y_{L_j}\|_{X^b}\lesssim L_j^{-1/2+\varepsilon_1+\varepsilon_2}$ due to (\ref{apriori1.8}). By (\ref{linearmapping}) and (\ref{fourierkernel}) we have
\begin{equation}\label{formulaL}\widehat{\Ls}_{kk'}(\lambda,\lambda')=\sum_{k_1-k_2=k-k'}\int_{\Rb^2} I(\lambda,\Omega+\lambda_1-\lambda_2+\lambda')\cdot V_{k_1-k_2}(\widehat{y_{L_1}})_{k_1}(\lambda_1)\cdot\overline{(\widehat{y_{L_2}})_{k_2}(\lambda_2)}\,\mathrm{d}\lambda_1\mathrm{d}\lambda_2
\end{equation} where $\Omega=|k|^2-|k_1|^2+|k_2|^2-|k'|^2$ and $I=I(\lambda,\mu)$ is as in (\ref{duhamleker}); we will omit the factor $\eta((k_1-k_2)/N^{1-\delta})$ in the definition of $\mathcal M^{<}$ in (\ref{lownonlin}) as it does not play a role. We may also assume that $|k_1-k_2|\sim R\lesssim L$. In the above expression, let $\mu:=\lambda-(\Omega+\lambda_1-\lambda_2+\lambda')$, in particular we have $|I|\lesssim\langle \lambda\rangle^{-1}\langle\mu\rangle^{-1}$ by (\ref{duhamleker}). By a routine argument, in proving (\ref{basicmatrix1}) we may assume $|\lambda_j|\leq L^{100}$ and $|\mu|\les L^{100}$; in fact, if say $|\lambda_1|$ is the maximum of these values and $|\lambda_1|\geq L^{100}$ (the other cases being similar), then we may fix the values of $k_j$, and hence $k-k'$, at a loss of at most $L^{12}$, and reduce to estimating
\[|\widehat{\Ls}_{kk'}(\lambda,\lambda')|\lesssim\int_{\Rb^3}\frac{1}{\<\lambda\>\<\lambda-\lambda_1+\lambda_2-\lambda_3-\Omega\>}\widehat{w_1}(\lambda_1)\overline{\widehat{w_2}(\lambda_2)}\,\mathrm{d}\lambda_1\mathrm{d}\lambda_2,\] with $|\lambda_1|\sim K\geq L^{100}$ and $\|\<\lambda_j\rangle^{b}\widehat{w_j}\|_{L^2}\lesssim 1$ for each $j$. By estimating $w_1$ in the unweighted $L^2$ norm we can gain a power $K^{-1/2}$, and using the $L_{\lambda_2}^1$ integrability of $\widehat{w_2}$ which follows from the weighted $L^2$ norm we can fix the value of $\lambda_2$. In the end this leads to
\[\sup_{k,k'}|\widehat{\Ls}_{kk'}(\lambda,\lambda')|\lesssim\langle\lambda\>^{-1}K^{-1/2}\] and hence \[\big\|\<\lambda\>^{1-b}\<\lambda'\>^{-b}\sup_{k,k'}|\widehat{\Ls}_{kk'}(\lambda,\lambda')|\big\|_{L_{\lambda,\lambda'}^2}\lesssim K^{-1/3}\lesssim L^{-30},\]which is more than enough, because $\|\widehat{\Ls}\|_{k\to k'}=\sup_{k,k'}|\widehat{\Ls}_{kk'}|$ if $\Ls$ is supported where $k-k'$ is constant.

Now we may assume $|\lambda_j|\leq L^{100}$ for $j\in\{1,2\}$ and $|\mu|\leq L^{100}$; we may also assume $|\lambda|+|\lambda'|\leq L^{\kappa^3}$ as otherwise we gain from the weights $\<\lambda\>^{2(1-b)}$ and $\<\lambda'\>^{-2b}$ in (\ref{basicmatrix1}). Similarly, in proving (\ref{basicmatrix2}) we may assume $|\lambda_j|\leq N^{100}$ for $j\in\{1,2\}$, $|\mu|\leq N^{100}$, and $|\lambda|+|\lambda'|\leq N^{100}$ (otherwise we may also fix $(k,k')$ and argue as above). Therefore, in proving (\ref{basicmatrix2}) we may replace the unfavorable exponents $\<\lambda\>^{2b}\<\lambda'\>^{-2(1-b)}$ by the favorable ones $\<\lambda\>^{2(1-b)}\<\lambda'\>^{-2b}$ at a price of $N^{C\kappa^{-1}}$; this will be acceptable since in the proof we will be able to gain a power $N^{-\delta/2}$. We remark that in the proof below (though not here), we may use the $Y^{1-b}$ norm as in (\ref{apriori1}) for the matrices in the decomposition of $y_{L_j}$; using the bounds of $\lambda_j$ as above, we may replace the exponent $1-b$ by $b$ (which then implies $L_{\lambda_j}^1$ integrability) again at a loss of either $L^{C\kappa^{-1}}$ or $N^{C\kappa^{-1}}$ depending on whether we are proving (\ref{basicmatrix1}) or (\ref{basicmatrix2}), which is acceptable. See also Section \ref{rem}.

This then allows us to fix the values of $\lambda_j$ in (\ref{formulaL}) using the $L_{\lambda_j}^1$ integrability coming from the weighted norms; moreover, by using the bound $|I|\lesssim\langle \lambda\rangle^{-1}\langle\mu\rangle^{-1}$, upper bounds for $\lambda$ and $\mu$ as above, and the weights in (\ref{basicmatrix1})--(\ref{basicmatrix2}), we may also fix the values of $\lambda$, $\lambda'$ and $\lfloor\mu\rfloor$, and reduce to estimating the quantity
\begin{equation}\label{quantity0-1}\Qc_{kk'}=\sum_{k_1-k_2=k-k'}h_{kk_1k_2k'}^{\mathrm{b}}(w_1)_{k_1}\overline{(w_2)_{k_2}},
\end{equation} where the tensor (which we call the \emph{base tensor})
\[h^{\mathrm{b}}=h_{kk_1k_2k'}^{\mathrm{b}}=V_{k_1-k_2}\cdot\mathbf{1}_{k_1-k_2+k'=k}\cdot \mathbf{1}_{|k|^2-|k_1|^2+|k_2|^2-|k'|^2=\Omega_0}\] with some value $\Omega_0$ determined by $\lambda_j$, $\lambda$, $\lambda'$ and $\lfloor\mu\rfloor$. Here we also assume $|k_j|\lesssim L_j$ and $|k_1-k_2|\sim R\lesssim L$, and $\|w_j\|_{\ell^2}\lesssim L_j^{-1/2+\varepsilon_1+\varepsilon_2}$. 

Now (\ref{quantity0-1}) is easily estimated by using Proposition \ref{merge} that
\[\|\Qc\|_{k\to k'}\lesssim\|h^{\mathrm{b}}\|_{kk_2\to k_1k'}\cdot \|w_1\|_{k_1}\cdot\|w_2\|_{k_2}\lesssim R\cdot R^{-\beta}\cdot L_1^{-1/2+\varepsilon_1+\varepsilon_2}L_2^{-1/2+\varepsilon_1+\varepsilon_2}\lesssim L^{-1/2+2\varepsilon_1-\varepsilon_2},\] which is enough for (\ref{basicmatrix1}) (namely we multiply this by the factor $\<\lambda\>^{-1}$ coming from $I$, and the weight $\<\lambda\>^{1-b}\<\lambda'\>^{-b}$ in (\ref{basicmatrix1}), then take the $L^2$ norm in $\lambda$ and $\lambda'$ to get (\ref{basicmatrix1}); the same happens below). For the $\|\Qc\|_{kk'}$ norm we have
\[\|\Qc\|_{kk'}\lesssim\|h^{\mathrm{b}}\|_{k_1\to kk_2k'}\cdot \|w_1\|_{k_1}\cdot\|w_2\|_{k_2}\lesssim R^{-\beta}\cdot NR\cdot L_1^{-1/2+\varepsilon_1+\varepsilon_2}L_2^{-1/2+\varepsilon_1+\varepsilon_2}\lesssim NL^{-1/2+2\varepsilon_1-\varepsilon_2},\] which is enough for (\ref{basicmatrix2}). Note that all the bounds for $h^{\mathrm{b}}$ we use here follow from Lemma \ref{counting}.

(2) Suppose $y_{L_1}$ is replaced by $\rho^{L_j}+z_{L_j}$, and $y_{L_2}$ is replaced by $\psi^{L_2}$. We may further decompose $\psi^{L_2}$ into $\zeta^{L_2,R_2}$ for $R_2\leq L_2/2$, (including the case $R_2=0$ by which we mean $\zeta^{L_2,0}=F_{L_2}$) and perform the same arguments as above fixing the $\lambda$ variables, and reduce\footnote{This reduction step actually involves a meshing argument as the estimate for $\Qc$ is probabilistic, see Section \ref{rem}.} to estimating the quantity
\begin{equation}\label{quantity0-2}\Qc_{kk'}=\sum_{k_1-k_2=k-k'}h_{kk_1k_2k'}^{\mathrm{b}}(w_1)_{k_1}\sum_{k_2'}\overline{h_{k_2k_2'}^{(2)}}\cdot\overline{(F_{L_2})_{k_2'}},
\end{equation} where $\|w_1\|_{\ell^2}\lesssim L_1^{-1/2+\varepsilon_1+\varepsilon_2}$, $h^{(2)}$ is independent from $F_{L_2}$, and is either the identity matrix or satisfies $\|h^{(2)}\|_{k_2\to k_2'}\lesssim R_2^{-1/2+3\varepsilon_1}$ and $\|h^{(2)}\|_{k_2k_2'}\lesssim L_2^{1+\delta} R_{2}^{-1/2+2\varepsilon_1}$. We then estimate (\ref{quantity0-2}) by
\begin{multline}\|\Qc\|_{k\to k'}\lesssim L_2^{-1}(\|h^{\mathrm{b}}\|_{kk_1k_2\to k'}+\|h^{\mathrm{b}}\|_{kk_1\to k_2k'})\|w_1\|_{k_1}\|h^{(2)}\|_{k_2\to k_2'}\\\lesssim R^{-\beta}\cdot R\min(L_1,L_2)\cdot L_2^{-1}L_1^{-1/2+\varepsilon_1+\varepsilon_2}\lesssim L^{-1/2+2\varepsilon_1-\varepsilon_2},\end{multline} using Propositions \ref{merge0} and \ref{trim}, which is enough for (\ref{basicmatrix1}). Note that here $h^{\mathrm{b}}$ depends on $k$ and $k'$ only via $k-k'$ and $|k|^2-|k'|^2$, and that $||k|^2-|k'|^2|\leq L^{\kappa^3}$ given the assumptions, so Proposition \ref{trim} is applicable. Similarly for the $\ell_{kk'}^2$ norm we have
\begin{multline}\|\Qc\|_{kk'}\lesssim L_2^{-1}(\|h^{\mathrm{b}}\|_{kk'\to k_1k_2}+\|h^{\mathrm{b}}\|_{kk_1k'\to k_2})\|w_1\|_{k_1}\|h^{(2)}\|_{k_2\to k_2'}\\\lesssim R^{-\beta}\cdot N(\min(L_1,L_2)+\min(L_1,R))\cdot L_2^{-1}L_1^{-1/2+\varepsilon_1+\varepsilon_2}\lesssim NL^{-1/2+2\varepsilon_1-\varepsilon_2},\end{multline} which is enough for (\ref{basicmatrix2}).

(3) Suppose $y_{L_j}$ is replaced by $\psi^{L_j}$ for $j\in\{1,2\}$. In this case we will start from (\ref{linearmapping}) and expand
\[(\psi^{L_j})_{k_j}=\sum_{k_j'}(H^{L_j})_{k_jk_j'}(F_{L_j})_{k_j'}\]for $j\in\{1,2\}$. There are then two cases, namely when $k_1'=k_2'$ or otherwise.

If $k_1'\neq k_2'$, then we can repeat the above argument (including further decomposing $\psi^{L_j}$ into $\zeta^{L_j,R_j}$ using (\ref{rao2}) and (\ref{simply}))  and fix the time Fourier variables, and reduce to estimating a quantity
\begin{equation}\label{quantity0-3}\Qc_{kk'}=\sum_{k_1-k_2=k-k'}h_{kk_1k_2k'}^{\mathrm{b}}\sum_{k_1',k_2'}h_{k_1k_1'}^{(1)}(F_{L_1})_{k_1'}\cdot\overline{h_{k_2k_2'}^{(2)}}\cdot\overline{(F_{L_2})_{k_2'}},
\end{equation} where $h^{(j)}$ is independent from $F_{L_j}$, and is either the identity matrix or satisfies $\|h^{(j)}\|_{k_j\to k_j'}\lesssim R_j^{-1/2+3\varepsilon_1}$ and $\|h^{(j)}\|_{k_jk_j'}\lesssim L_j^{1+\delta} R_{j}^{-1/2+2\varepsilon_1}$. Since $k_1'\neq k_2'$, we can apply Proposition \ref{trim}, either in $(k_1',k_2')$ jointly (if $L_1=L_2$) or first in $k_1'$ then in $k_2'$ (if, say, $L_1\geq 2L_2$) and get that
\begin{multline}\|\Qc\|_{k\to k'}\lesssim (L_1L_2)^{-1}\max(\|h^{\mathrm{b}}\|_{k\to k_1k_2k'},\|h^{\mathrm{b}}\|_{kk_1\to k_2k'},\|h^{\mathrm{b}}\|_{kk_2\to k_1k'},\|h^{\mathrm{b}}\|_{kk_1k_2\to k'})\\\times\|h^{(1)}\|_{k_1\to k_1'}\|h^{(2)}\|_{k_2\to k_2'}\lesssim R^{-\beta}(L_1L_2)^{-1}\cdot R\min(L_1,L_2)\lesssim L^{-2/3},\end{multline} which is enough for (\ref{basicmatrix1}). As for $\ell_{kk'}^2$ norm we have
\[\|\Qc\|_{kk'}\lesssim (L_1L_2)^{-1}\|h^{\mathrm{b}}\|_{kk_1k_2k_3}\cdot \|h^{(1)}\|_{k_1\to k_1'}\|h^{(2)}\|_{k_2\to k_2'}\lesssim (L_1L_2)^{-1}R^{-\beta}\cdot \min(L_1,L_2)^{3/2} NR\] which is enough for (\ref{basicmatrix2}).

Finally assume $k_1'=k_2'$, then $L_1=L_2=L$. In (\ref{linearmapping}) we the summation in $k_1'=k_2'$ gives
\[\sum_{k_1'}\frac{1}{\<k_1'\>^2}(H^L)_{k_1k_2'}(t')\overline{(H^L)_{k_2k_1'}}(t').\]Using the cancellation (\ref{unitarity}) since $k_1\neq k_2$, we can replace the factor $1/\<k_1'\>^2$ in the above expression by $1/\<k_1'\>^2-1/\<k_1\>^2$; then by further decomposing $H^{L_j}$ into $h^{L_j,R_j}$ by (\ref{simply})  and repeating the above arguments, we can reduce to estimating the quantity
\begin{equation}\label{quantity0-4}\Qc_{kk'}=\sum_{k_1-k_2=k-k'}h_{kk_1k_2k'}^{\mathrm{b}}\cdot(\widetilde{h})_{k_1k_2},\quad (\widetilde{h})_{k_1k_2}=\sum_{k_1'}\bigg(\frac{1}{\<k_1'\>^2}-\frac{1}{\<k_1\>^2}\bigg)h_{k_1k_1'}^{(1)}\overline{h_{k_2k_1'}^{(2)}},
\end{equation} where $h^{(j)}$ is either the identity matrix or satisfies $\|h^{(j)}\|_{k_j\to k_j'}\lesssim R_j^{-1/2+3\varepsilon_1}$ and $\|h^{(j)}\|_{k_jk_j'}\lesssim L_j^{1+\delta} R_{j}^{-1/2+2\varepsilon_1}$. Note that we may assume $|k_j-k_j'|\lesssim R_jL^\delta$ using the bound (\ref{apriori1.5}), so in particular we have
\[\bigg|\frac{1}{\<k_1'\>^2}-\frac{1}{\<k_1\>^2}\bigg|\lesssim \frac{R+\min(R_1,R_2)}{L^3}\] up to a loss of $L^{C\delta}$ (which is acceptable as in this case we can gain at least $L^{\varepsilon_2}$). Using these, we estimate, assuming without loss of generality that $R_1\geq R_2$:
\begin{multline}\|\Qc\|_{k\to k'}\lesssim \|h^{\mathrm{b}}\|_{kk_1k_2\to k'}\|\widetilde{h}\|_{k_1k_2}\lesssim\frac{R+R_2}{L^3}\cdot L^{1+\delta}R_1^{-1/2+2\varepsilon_1}R_2^{-1/2+3\varepsilon_1}\cdot R^{-\beta}L\min(R_1,R)\\\lesssim L^{-1/2+3\varepsilon_1-\varepsilon_2},\end{multline}
\[\|\Qc\|_{kk'}\lesssim\|h^{\mathrm{b}}\|_{k_1k_2\to kk'}\|\widetilde{h}\|_{k_1k_2}\lesssim \frac{R+R_2}{L^3}\cdot L^{1+\delta}R_1^{-1/2+2\varepsilon_1}R_2^{-1/2+3\varepsilon_1}\cdot R^{-\beta}NL\lesssim NL^{-2/3}.\] This completes the proof for (\ref{basicmatrix1}) and (\ref{basicmatrix2}). 
\subsection{The matrices $H^{N,L}$ and $h^{N,L}$}\label{linearbound} We now prove (\ref{apriori1})--(\ref{apriori1.5}). Let $\Ls^{N,L}$ be the linear operator defined by
\begin{equation}\label{defofL}z\mapsto -i\int_0^t\Delta_N\Mc^<(v_L,v_L,z)(t')\,\mathrm{d}t',\end{equation} we also extend its kernel in the same way as we do for $\Ls$ in Section \ref{Lbound}. Let $\widetilde{\Ls}^{N,L}=\Ls^{N,L}-\Ls^{N,L/2}$, then by induction hypothesis and the proof in Section \ref{Lbound}, we know that $\widetilde{\Ls}^{N,L}$ also satisfies the estimates (\ref{basicmatrix1})--(\ref{basicmatrix2}). Clearly (\ref{basicmatrix1}) implies that $\|\widetilde{\Ls}^{N,L}\|_{X^b\to X^{1-b}}\lesssim L^{-1/2+3\varepsilon_1-\varepsilon_2}$; moreover it is easy to see that \[\|\Ls^{N,L}z\|_{X^1}\lesssim\|\Mc^<(v_L,v_L,z)\|_{L_{t,x}^2}\lesssim L^{12}\|z\|_{X^0},\] hence $\|\Ls^{N,L}\|_{X^0\to X^1}\lesssim L^{12}$ and he same holds for $\widetilde{\Ls}^{N,L}$. By interpolation we obtain that $\|\widetilde{\Ls}^{N,L}\|_{X^\alpha\to X^\alpha}\lesssim L^{-1/2+3\varepsilon_1}$ for $\alpha\in\{b,1-b\}$ (note that we can always gain a positive power of $\tau$ using Lemma \ref{shorttime}, see Section \ref{rem}). Moreover, consider the kernel $(\Fc\widetilde{\Ls}^{N,L})_{kk'}(\lambda,\lambda')$, then we also have the bound
\[\int_\Rb\<\lambda\>^{2(1-b)}\|\<\lambda'\>^{-b}(\Fc\widetilde{\Ls}^{N,L})_{kk'}(\lambda,\lambda')\|_{k'\lambda'\to k}^2\,\mathrm{d}\lambda\lesssim L^{-1+6\varepsilon_1-2\varepsilon_2}\] which follows from (\ref{basicmatrix1}). If we replace the factor $\<\lambda'\>^{-b}$ by $1$, then a simple argument shows that
\[\|(\Fc\Ls^{N,L})_{kk'}(\lambda,\lambda')\|_{k'\lambda'\to k}\lesssim L^{12}\<\lambda\>^{-1}\] (and the same for $\widetilde{\Ls}^{N,L}$) by using that 
\[|(\Fc\Ls^{N,L}z)_k(\lambda)|\lesssim\<\lambda\>^{-1}\int_\Rb\<\lambda-\mu\>^{-1}|\Fc\Mc^<(v_L,v_L,z)_k(\mu)|\,\mathrm{d}\mu\lesssim\<\lambda\>^{-1}\|\Mc^<(v_L,v_L,z)\|_{L^2}\]and then fixing the Fourier modes of $v_L$. Interpolating again, we get that \begin{equation}\label{aux1}\int_\Rb\<\lambda\>^{2(1-b)}\|\<\lambda'\>^{-(1-b)}(\Fc\widetilde{\Ls}^{N,L})_{kk'}(\lambda,\lambda')\|_{k'\lambda'\to k}^2\,\mathrm{d}\lambda\lesssim L^{-1+6\varepsilon_1}.\end{equation} A similar interpolation gives\begin{equation}\label{aux2}\int_\Rb\<\lambda'\>^{-2b}\|\<\lambda\>^{b}(\Fc\widetilde{\Ls}^{N,L})_{kk'}(\lambda,\lambda')\|_{k'\to k\lambda}^2\,\mathrm{d}\lambda'\lesssim L^{-1+6\varepsilon_1}.\end{equation} Clearly $\Ls^{N,L}$ satisfies (\ref{aux1})--(\ref{aux2}) with right hand sides replaced by 1.

Now let \[\Hs^{N,L}=(1-\Ls^{N,L})^{-1}=\sum_{n=0}^\infty(\Ls^{N,L})^n,\] it is easy to see that $\Hs^{N,L}-1$ satisfies the same bounds (\ref{aux1})--(\ref{aux2}) with right hand sides replaced by 1; for example (\ref{aux1}) follows from iterating the bound
\begin{multline*}\big\|\<\lambda\>^{1-b}\|\<\lambda''\>^{-(1-b)}(\As\Bs)_{kk''}(\lambda,\lambda'')\|_{k''\lambda''\to k}\big\|_{L_\lambda^2}\\\lesssim \big\|\<\lambda\>^{1-b}\|\<\lambda'\>^{-(1-b)}\As_{kk'}(\lambda,\lambda'')\|_{k'\lambda'\to k}\big\|_{L_\lambda^2}\cdot\|\Bs\|_{X^{1-b}\to X^{1-b}}\end{multline*}
provided \begin{equation}\label{product}(\As\Bs)_{kk''}(\lambda,\lambda'')=\sum_{k'}\int_\Rb \As_{kk'}(\lambda,\lambda')\Bs_{k'k''}(\lambda',\lambda'')\,\mathrm{d}\lambda',\end{equation} and (\ref{aux2}) is proved similarly. Defining further
\[\widetilde{\Hs}^{N,L}=\Hs^{N,L}-\Hs^{N,L/2}=\sum_{n=1}^\infty(-1)^{n-1}(\Hs^{N,L}\widetilde{\Ls}^{N,L})^n\Hs^{N,L}.\] By iterating the $X^\alpha\to X^\alpha$ bounds and using also (\ref{basicmatrix2}) for $\widetilde{\Ls}^{N,L}$ we can show that
\begin{equation}\label{aux3}\int_{\Rb^2}\<\lambda\>^{2b}\<\lambda'\>^{-2(1-b)}\|(\Fc\widetilde{\Hs}^{N,L})_{kk'}(\lambda,\lambda')\|_{kk'}^2\,\mathrm{d}\lambda\mathrm{d}\lambda'\lesssim N^{2+2\delta}L^{-1+4\varepsilon_1}.\end{equation} The weighted bound
\begin{equation}\label{aux4}\int_{\Rb^2}\<\lambda\>^{2b}\<\lambda'\>^{-2(1-b)}\bigg\|\bigg(1+\frac{|k-k'|}{\min(L,N^{1-\delta})}\bigg)^\kappa(\Fc\widetilde{\Hs}^{N,L})_{kk'}(\lambda,\lambda')\bigg\|_{kk'}^2\,\mathrm{d}\lambda\mathrm{d}\lambda'\lesssim N^{3}\end{equation} is shown in the same way but using Proposition \ref{weighted}.

In addition, we can also show that
\begin{equation}\label{aux5}\int_{\Rb^2}\<\lambda\>^{2(1-b)}\<\lambda'\>^{-2b}\|(\Fc\widetilde{\Hs}^{N,L})_{k\to k'}(\lambda,\lambda')\|_{kk'}^2\,\mathrm{d}\lambda\mathrm{d}\lambda'\lesssim L^{-1+6\varepsilon_1}.\end{equation} This can be proved using (\ref{aux1})--(\ref{aux2}), by iterating the bounds
\begin{multline}\big\|\<\lambda\>^{1-b}\<\lambda''\>^{-b}\|(\As\Bs)_{kk'}(\lambda,\lambda'')\|_{k'\to k''}\big\|_{L_{\lambda,\lambda''}^2}\\\lesssim\big\|\<\lambda\>^{1-b}\<\lambda'\>^{-b}\|\As_{kk'}(\lambda,\lambda')\|_{k'\to k'}\big\|_{L_{\lambda,\lambda'}^2}\cdot\big\|\<\lambda''\>^{-b}\|\<\lambda'\>^{b}\Bs_{k'k''}(\lambda',\lambda'')\|_{k''\to k'\lambda'}\big\|_{L_{\lambda''}^2}
\end{multline} and similarly 
\begin{multline}\big\|\<\lambda\>^{1-b}\<\lambda''\>^{-b}\|(\As\Bs)_{kk'}(\lambda,\lambda'')\|_{k'\to k''}\big\|_{L_{\lambda,\lambda''}^2}\\\lesssim\big\|\<\lambda\>^{1-b}\|\<\lambda'\>^{-(1-b)}\As_{kk'}(\lambda,\lambda')\|_{k'\lambda'\to k}\big\|_{L_{\lambda}^2}\cdot\big\|\<\lambda'\>^{1-b}\<\lambda''\>^{-b}\|\Bs_{k'k''}(\lambda',\lambda'')\|_{k'\to k''}\big\|_{L_{\lambda',\lambda''}^2}
\end{multline} assuming (\ref{product}).

Now we can finally prove (\ref{apriori1})--(\ref{apriori1.5}). In fact, by definition of $\Hs^{N,L}$ and $\widetilde{\Hs}^{N,L}$, there exists an extension of $h^{N,L}$ such that
\[(\widehat{h^{N,L}})_{kk'}(\lambda)=\int_\Rb (\Fc\widetilde{\Hs}^{N,L})_{kk'}(\lambda,\lambda')\widehat{\chi}(\lambda')\,\mathrm{d}\lambda',\] so the $Y^{1-b}$ and $Z^b$ bounds in (\ref{apriori1}), as well as (\ref{apriori1.5}), can be deduced directly from (\ref{aux3})--(\ref{aux5}). The bound $\sup_t\|h^{N,L}(t)\|_{k\to k'}$ is also easily controlled by $\|\widetilde{\Hs}^{N,L}\|_{X^b\to X^b}$ using the embedding $L_t^\infty L^2\hookrightarrow X^b$. This completes the proof for (\ref{apriori1})--(\ref{apriori1.5}).

\section{Estimates for $\rho^N$}\label{estrhoN} In this section we prove the first bound in (\ref{apriori1.8}) reagrding $\rho^N$, assuming $N=M$. Recall that from (\ref{lineareqn}), (\ref{rao}) and (\ref{defxin}) we deduce that $\rho^N$ satisfies the equation
\begin{multline}\label{eqnrhon}(\rho^N)_k(t)=-i\int_0^t\Delta_N\Mc^<(v_{N/2},v_{N/2},\rho^N)_k(t')\,\mathrm{d}t'\\-i\Delta_N[\Mc^{\ll}(v_N,v_N,\psi^N+\rho^N)-\Mc^{\ll}(v_{N/2},v_{N/2},\psi^N+\rho^N)]_k(t')\,\mathrm{d}t'
\end{multline} with initial data $(\rho^N)_k(0)=0$. Let $\Ls^{N,L}$ be defined as in (\ref{defofL}), and denote $\Ls^N:=\Ls^{N,N/2}$. from Section \ref{linearbound} we know that $(1-\Ls^{N})^{-1}:=\Hs^N$ is well-defined, and has kernel $(\Hs^N)_{kk'}(t,t')$ in physical space and $(\Fc\Hs^N)_{kk'}(\lambda,\lambda')$ in Fourier space. Then (\ref{eqnrhon}) can be reduced to
\begin{equation}\label{eqnrhon2}
(\rho^N)_k(t)=\sum_{k'}\int_0^t (\Hs^N)_{kk'}(t,t')W_{k'}(t')\,\mathrm{d}t'
\end{equation} where
\begin{equation}\label{eqnrhon3}W_k(t)=-i\Delta_N\int_0^t\sum_{w_1,w_2,w_3}\Mc^{\ll}(w_1,w_2,w_3)_k(t')\,\mathrm{d}t'.
\end{equation} Here in (\ref{eqnrhon3}) we assume for $j\in\{1,2\}$ that $w_j\in\{\psi^{N_j},\rho^{N_j},z_{N_j}\}$ where $\max(N_1,N_2)=N$, and that $w_3\in\{\psi^N,\rho^N\}$.

In order to prove the bound for $\rho^N$ in (\ref{apriori1.5}), we will apply a continuity argument, namely assuming (\ref{apriori1.5}) and then improving it with a smallness factor. This can be done as long as we bound \begin{equation}\label{improverhon}\|W\|_{X^{b}(J)}\leq \tau^\theta N^{-1/2+\varepsilon_1+\varepsilon_2},\end{equation} since from Section \ref{linearbound} we know $\Hs^N$ is bounded from $X^b(J)$ to $X^b(J)$. In fact we will prove (\ref{improverhon}) with an extra gain $N^{-\varepsilon_2/2}$ which will allow us to ignore any possible $N^{C\delta}$ loss in the process. The smallness factor $\tau^\theta$ will be provided by Lemma \ref{shorttime} as in Section \ref{rem}, so we will not worry about it below. We divide the right hand side of (\ref{eqnrhon3}) into three terms:
\begin{itemize}
\item Term I: when $w_3=\rho^N$;
\item Term II: when $w_3=\psi^N$ and $z_{N'}\in\{w_1,w_2\}$ for some $N'\geq N/2$; 
\item Term III: when $w_3=\psi^N$ and $w_1,w_2\in\{\psi^N,\rho^N,\psi^{N/2},\rho^{N/2}\}$.
\end{itemize} Note that these are the only possibilities, since if (say) $N_1=N$, $w_1\in\{\psi^N,\rho^N\}$ and $N_2\leq N/2$, then we must have $N_2=N/2$ due to the support condition for $\psi^N$ and $\rho^N$, as well as the restriction $|k_1-k_2|\lesssim N^\varepsilon$ in $\Mc^\ll$. Moreover, the estimate of term I follows from the operator norm bound
\begin{equation}\label{normbound}\big\|I\Delta_N\Mc^{\ll}(y_{N_1},y_{N_2},z)\big\|_{X^b(J)}\lesssim\tau^\theta \max(N_1,N_2)^{-1/3}\|z\|_{X^b(J)}
\end{equation} which is proved by repeating the arguments in Section \ref{operbound} (the proof that works for $\Mc^<$ certainly also works for $\Mc^\ll$). In the next two sections we will deal with terms II and III respectively.
\subsection{Term II} Assume without loss of generality that $w_1=z_{N'}$. There are then two cases to consider, when $w_2\in\{\rho^{N_2},z_{N_2}\}$ or when $w_2=\psi^{N_2}$.
\subsubsection{The case $w_2\in\{\rho^{N_2},z_{N_2}\}$}\label{case2-1} If $w_2\in\{\rho^{N_2},z_{N_2}\}$, then we in particular have $\|w_2\|_{X^{b}(J)}\lesssim N_2^{-1/2+\varepsilon_1+\varepsilon_2}$. By Lemma \ref{extension}, we may fix an extension of $w_1$ and $w_2$ that satisfy the same bounds as they do but with $X^b(J)$ replaced by $X^b$; moreover they satisfy the same measurability conditions as $w_1$ and $w_2$. For simplicity we will still denote them by $w_1$ and $w_2$. The same thing is done for $w_3=\psi^N$, as well as the corresponding matrices.

Now, by (\ref{eqnrhon3}) and Lemma \ref{duhamelform0}, we can find an extension of II, which we still denote by II for simplicity, such that
\begin{multline}\label{fourierduhamel1}\widehat{\mathrm{II}_k}(\lambda)=\sum_{k_1-k_2+k_3=k}\int_{\Rb^3} I(\lambda,\Omega+\lambda_1-\lambda_2+\lambda_3)\cdot(\widehat{w_1})_{k_1}(\lambda_1)\cdot\overline{(\widehat{w_2})_{k_2}(\lambda_2)}\\\times\eta\bigg(\frac{k_1-k_2}{N^\varepsilon}\bigg)V_{k_1-k_2}\sum_{k_3'}(\widehat{H^N})_{k_3k_3'}(\lambda_3)(F_N)_{k_3'}\,\mathrm{d}\lambda_1\mathrm{d}\lambda_2\mathrm{d}\lambda_3
\end{multline} where $\Omega=|k|^2-|k_1|^2+|k_2|^2-|k_3|^2$ and $I=I(\lambda,\mu)$ is as in (\ref{duhamleker}). In the above expression, let $\mu:=\lambda-(\Omega+\lambda_1-\lambda_2+\lambda_3)$, in particular we have $|I|\lesssim\langle \lambda\rangle^{-1}\langle\mu\rangle^{-1}$ by (\ref{duhamleker}). By a routine argument we may assume $|\lambda|\leq N^{100}$ and similarly for $\mu$ and each $\lambda_j$; in fact, if say $|\lambda_1|$ is the maximum of these values and $|\lambda_1|\geq N^{100}$, then we may fix the values of $k$ and all $k_j$ at a loss of at most $N^{12}$, and reduce to estimating (with the value of $\Omega$ fixed)
\[|\widehat{\mathrm{II}}(\lambda)|\lesssim\int_{\Rb^3}\frac{1}{\<\lambda\>\<\lambda-\lambda_1+\lambda_2-\lambda_3-\Omega\>}\big|\widehat{w_1}(\lambda_1)\overline{\widehat{w_2}(\lambda_2)}\widehat{w_3}(\lambda_3)\big|\,\mathrm{d}\lambda_1\mathrm{d}\lambda_2\mathrm{d}\lambda_3,\] with $|\lambda_1|\sim K\geq N^{100}$ and $\|\<\lambda_j\rangle^{b}\widehat{w_j}\|_{L^2}\lesssim 1$ for each $j$. By estimating $w_1$ in the unweighted $L^2$ norm we can gain a power $K^{-1/2}$, and using the $L^1$ integrability of $\widehat{w_j}$ which follows from the weighted $L^2$ norms we can fix the values of $\lambda_j$ for $j\in\{2,3\}$. In the end this leads to
\[|\widehat{\mathrm{II}}(\lambda)|\lesssim\mathbf{1}_{|\lambda|\lesssim K}\langle\lambda\>^{-1}K^{-1/2}\] and hence $\|\<\lambda\>^{b}\widehat{\mathrm{II}}\|_{L^2}\lesssim K^{-1/3}\lesssim N^{-30}$, which is more than enough for (\ref{apriori1.8}). 

Now, with $|\lambda|\leq N^{100}$ etc., we may apply the bounds (\ref{apriori1})--(\ref{apriori1.8}), but for the extensions and global norms, and replace the $Y^{1-b}$ norm (if any) by the $Y^b$ norm at a loss of $N^{C\kappa^{-1}}$ which will be neglected as stated above. Similarly, as $|\lambda|\leq N^{100}$, we also only need to estimate $\mathrm{II}$ in the $X^{1-b}$ instead of $X^b$ norm  again at a loss of $N^{C\kappa^{-1}}$. Then, using $L^1$ integrability in $\lambda_j$ (together with a meshing argument, see Section \ref{rem}) provided by the weighted bounds (\ref{apriori1})--(\ref{apriori1.8}), and the (almost) summability in $\mu$ due to the $\<\mu\>^{-1}$ factor in (\ref{duhamleker}), we may fix the values of $\lambda$, $\lambda_j\,(1\leq j\leq 3)$ and $\lfloor\mu\rfloor$ (and hence the value of $\Omega\in\Zb$) and reduce to estimating the $\ell_k^2$ norm of the following quantity
\begin{equation}\label{quantity}\Qc_k:=\sum_{\substack{k_1-k_2+k_3=k\\|k|^2-|k_1|^2+|k_2|^2-|k_3|^2=\Omega_0}}\eta\bigg(\frac{k_1-k_2}{N^\varepsilon}\bigg)V_{k_1-k_2}\cdot(\widehat{w_1})_{k_1}\overline{(\widehat{w_2})_{k_2}}\cdot\sum_{k_3'}H_{k_3k_3'}(F_N)_{k_3'}.
\end{equation} Here in (\ref{quantity}) we assume that $|k_1|\leq N$, $|k_2|\leq N_2$, $|k_1-k_2|\lesssim N^\varepsilon$ and $N/2< \<k_3\>,\<k_3'\>\leq N$, and $\Omega_0\in\Zb$ is fixed, and the inputs satisfy that
\[\|\widehat{w_1}\|_{\ell^2}\lesssim N^{-1/2+\varepsilon_1},\quad \|\widehat{w_2}\|_{\ell^2}\lesssim N_2^{-1/2+\varepsilon_1+\varepsilon_2},\quad \|H\|_{k_3\to k_3'}\lesssim 1.
\] To estimate $\Qc$, we may assume $|k_1-k_2|\sim R\lesssim N^{\varepsilon}$, and define the base tensor
\[h^{\mathrm{b}}=h_{kk_1k_2k_3}^{\mathrm{b}}=\eta\bigg(\frac{k_1-k_2}{N^\varepsilon}\bigg)V_{k_1-k_2}\cdot\mathbf{1}_{k_1-k_2+k_3=k}\cdot \mathbf{1}_{|k|^2-|k_1|^2+|k_2|^2-|k_3|^2=\Omega_0},\] with also the restrictions on $k_j$ as above. Then we have
\[\Qc_k=\sum_{k_1,k_2,k_3,k_3'}h_{kk_1k_2k_3}^{\mathrm{b}}\cdot(\widehat{w_1})_{k_1}\overline{(\widehat{w_2})_{k_2}}\cdot H_{k_3k_3'}(F_N)_{k_3'}\] and hence
\[\|\Qc\|_{\ell^2}\lesssim N^{-1/2+\varepsilon_1}N_2^{-1/2+\varepsilon_1+\varepsilon_2}\bigg\|\sum_{k_3,k_3'}h_{kk_1k_2k_3}^{\mathrm{b}}H_{k_3k_3'}(F_N)_{k_3'}\bigg\|_{kk_2\to k_1}.\] By Lemma \ref{trim} and the independence between $H_{k_3k_3'}$ and $(F_N)_{k_3'}$, we get that
\begin{multline*}\bigg\|\sum_{k_3,k_3'}h_{kk_1k_2k_3}^{\mathrm{b}}H_{k_3k_3'}(F_N)_{k_3'}\bigg\|_{kk_2\to k_1}\lesssim N^{\delta}\cdot \max\bigg[N^{-1}\bigg\|\sum_{k_3}h_{kk_1k_2k_3}^{\mathrm{b}}H_{k_3k_3'}\bigg\|_{kk_2k_3'\to k_1},\bigg\|\sum_{k_3}h_{kk_1k_2k_3}^{\mathrm{b}}H_{k_3k_3'}\bigg\|_{kk_2\to k_1k_3'}\bigg]\\
\lesssim N^{\delta-1}\|H\|_{k_3\to k_3'}\cdot\max\big(\big\|h_{kk_1k_2k_3}^{\mathrm{b}}\big\|_{kk_2k_3\to k_1},\|h_{kk_1k_2k_3}^{\mathrm{b}}\big\|_{kk_2\to k_1k_3}\big)\end{multline*} $N$-certainly. By the definition of $h^{\mathrm{b}}$, and using Schur's bound and counting estimates in Lemma \ref{counting}, and noticing that $|k_2|\leq N_2$ and $|k_1-k_2|\lesssim R$, we can bound\[\max\big(\big\|h_{kk_1k_2k_3}^{\mathrm{b}}\big\|_{kk_2k_3\to k_1},\|h_{kk_1k_2k_3}^{\mathrm{b}}\big\|_{kk_2\to k_1k_3}\big)\lesssim N^\delta R^{-\beta}\cdot N\cdot\min(N_2,R).\] Since also $\|H\|_{k_3\to k_3'}\lesssim 1$, we conclude that
\begin{equation}\label{rhoest1}\|\Qc\|_{\ell^2}\lesssim N^{-1/2+\varepsilon_1}N_2^{-1/2+\varepsilon_1+\varepsilon_2}\cdot N^{2\delta}R^{-\beta}\min(N_2,R)\lesssim N^{-1/2+\varepsilon_1+\varepsilon_2/2},\end{equation} which is enough for (\ref{apriori1.8}). This concludes the proof for term II when $w_2\in\{\rho^{N_2},z_{N_2}\}$. Note that the above argument also works for the case when $w_1=\rho^N$ and $w_2=\rho^{N_2}$, because here we must have $N_2\geq N/2$ due to the support condition of $\rho^N$ and the assumption $|k_1-k_2|\lesssim N^\varepsilon$, and the above arguments give the same (in fact better) estimates.
\subsubsection{The case $w_2=\psi^{N_2}$}\label{case2-2} In this case, by repeating the first part of the arguments in Section \ref{case2-1}, we can reduce to estimating the $\ell_k^2$ norm of the quantity
\begin{equation}\label{quantity2}\Qc_k:=\sum_{\substack{k_1-k_2+k_3=k\\|k|^2-|k_1|^2+|k_2|^2-|k_3|^2=\Omega_0}}\eta\bigg(\frac{k_1-k_2}{N^\varepsilon}\bigg)V_{k_1-k_2}\cdot(\widehat{w_1})_{k_1}\cdot\sum_{k_2'}\overline{H_{k_2k_2'}^{(2)}(F_{N_2})_{k_2'}}\cdot\sum_{k_3'}H_{k_3k_3'}^{(3)}(F_N)_{k_3'}.
\end{equation} Here in (\ref{quantity2}) we assume that $|k_1|\leq N$, $|k_2|\leq N_2$, $|k_1-k_2|\sim R\lesssim N^\varepsilon$, $N_2/2< \<k_2\>,\<k_2'\>\leq N_2$ and $N/2< \<k_3\>,\<k_3'\>\leq N$, and $\Omega_0\in\Zb$ is fixed, and the inputs satisfy that
\[\|\widehat{w_1}\|_{\ell^2}\lesssim N^{-1/2+\varepsilon_1},\quad \|H^{(j)}\|_{k_j\to k_j'}\lesssim 1\,(j=2,3).
\] Moreover, this $H^{(j)}$ is such that either $H^{(j)}=\mathrm{Id}$ or $\|H^{(j)}\|_{k_jk_j'}\lesssim N_j^{1+\delta}$ with $N_3=N$. The sum in (\ref{quantity2}) can be decomposed into a term where $k_2'\neq k_3'$ and a term where $k_2'=k_3'$.

\emph{Case 1: $k_2'\neq k_3'$}. Let $h_{kk_1k_2k_3}^{\mathrm{b}}$ be defined as above, it suffices to estimate the $\ell_{k_1}^2\to\ell_k^2$ norm of the tensor
\[(k,k_1)\mapsto\sum_{k_2,k_3}h_{kk_1k_2k_3}^{\mathrm{b}}\sum_{k_2',k_3'}\overline{H_{k_2k_2'}^{(2)}(F_{N_2})_{k_2'}}\cdot H_{k_3k_3'}^{(3)}(F_N)_{k_3'}.\] by using the $\ell^2$ norm of $w_1$. If $N_3=N$, then the tensors $h^{\mathrm{b}}$, $H^{(2)}$ and $H^{(3)}$ are independent from $(F_N)_{k_2'}$ and $(F_N)_{k_3'}$, and $k_2'\neq k_3'$, so we can apply Lemma \ref{trim}; if $N_2\leq N/2$, then $h^{\mathrm{b}}$, $H^{(2)}$ and $H^{(3)}$ and $(F_{N_2})_{k_2'}$ are all independent from $(F_N)_{k_3'}$, and moreover $h^{\mathrm{b}}$ and $H^{(2)}$ are independent from $(F_{N_2})_{k_2'}$, so we can apply Lemma \ref{trim} iteratively, first for the sum in $(k_3,k_3')$, and then for the sum in $(k_2,k_2')$. In either case, by applying Lemma \ref{trim} and combining it with Lemma \ref{merge} and estimating $H^{(j)}$ in the $k_j\to k_j'$ norm, we obtain $N$-certainly that the desired $\ell_{k_1}^2\to\ell_k^2$ norm of the tensor is bounded by
\[N^\delta N_2^{-1}N^{-1}\cdot\max\big(\|h^{\mathrm{b}}\|_{kk_2k_3\to k_1},\|h^{\mathrm{b}}\|_{kk_2\to k_1k_3},\|h^{\mathrm{b}}\|_{kk_3\to k_1k_2},\|h^{\mathrm{b}}\|_{k\to k_1k_2k_3}\big).\] Using the fact that $|k_2|\leq N_2$ and $|k_3|\leq N$ in the support of $h^{\mathrm{b}}$, and Lemma \ref{counting} as above, we can show that
\[\max\big(\|h^{\mathrm{b}}\|_{kk_2k_3\to k_1},\|h^{\mathrm{b}}\|_{kk_2\to k_1k_3},\|h^{\mathrm{b}}\|_{kk_3\to k_1k_2},\|h^{\mathrm{b}}\|_{k\to k_1k_2k_3}\big)\lesssim R^{-\beta}N^\delta\cdot NN_2,\] hence
\begin{equation}\label{rhoest2}\|\Qc\|_{\ell^2}\lesssim N^{-1/2+\varepsilon_1}\cdot R^{-\beta}N^{2\delta}\lesssim N^{-1/2+\varepsilon_1+\varepsilon_2/2},\end{equation} which is enough for (\ref{apriori1.8}).

\emph{Case 2: $k_2'=k_3'$}. In this case we must have $N_2=N$, and we can reduce (\ref{quantity2}) to the expression\footnote{Here we are simplifying by replacing $|g_{k_2'}|^2$ by $1$ (we will do the same below). This is because $\Eb(|g_k|^2-1)=0$, so any large deviation estimate satisfied by linear combinations of $g_k$, which is the only thing we rely on, will hold also for linear combinations of $|g_k|^2-1$, so the contribution of $|g_{k_2'}|^2-1$ can always be treated in the same way as the $k_2'\neq k_3'$ case.}
\begin{equation}\label{quantity3}\Qc_k=\sum_{\substack{k_1-k_2+k_3=k\\|k|^2-|k_1|^2+|k_2|^2-|k_3|^2=\Omega_0}}\eta\bigg(\frac{k_1-k_2}{N^\varepsilon}\bigg)V_{k_1-k_2}\cdot(\widehat{w_1})_{k_1}\cdot(\widetilde{H})_{k_2k_3}
\end{equation} where
\[(\widetilde{H})_{k_2k_3}=\sum_{k_2'}\frac{1}{\<k_2'\>^2}\overline{H_{k_2k_2'}^{(2)}}\cdot H_{k_3k_2'}^{(3)}.\] As $k_2\neq k_3$ in (\ref{quantity2}) due to the definition of $\Mc^{\ll}$, we know that either $H^{(2)}$ or $H^{(3)}$ must not be identity, hence we have $\|\widetilde{H}\|_{\ell_{k_2k_3}^2}\lesssim N^{-1+\delta}$. By (\ref{quantity3}) we then simply estimate
\begin{equation}\label{rhoest3}\|\Qc\|_{\ell_k^2}\lesssim\|\widehat{w_1}\|_{\ell^2}\cdot\|\widetilde{H}\|_{\ell_{k_2k_3}^2}\cdot\|h^{\mathrm{b}}\|_{kk_2k_3\to k_1}\lesssim N^{-1/2+\varepsilon_1}\cdot N^{-1+\delta}\cdot R^{-\beta}\cdot NR\lesssim N^{-1/2+\varepsilon_1+\varepsilon_2/2}\end{equation} using Lemma \ref{counting}, noticing that $|k_1-k_2|\lesssim R$ and $|k_3|\leq N$. This completes the proof for term II.
\subsection{Term III}\label{sectionIII} Here we assume $w_3=\psi^N$ and $w_1,w_2\in\{\psi^N,\rho^N,\psi^{N/2},\rho^{N/2}\}$. We consider two possibilities, when $w_1,w_2\in\{\psi^N,\psi^{N/2}\}$, which we call term IV, and when $w_j\in\{\rho^N,\rho^{N/2}\}$ for some $j\in\{1,2\}$, which we call term V.
\subsubsection{Term IV}\label{sectionIV} Suppose $w_1,w_2\in \{\psi^N,\psi^{N/2}\}$. We may also decompose them into $\psi^{N_j,L_j}$ for $L_j\leq N_j/2$, and reduce to
\begin{equation}\label{termIV}
\begin{aligned}\mathrm{IV}_k(t)&=-i\Delta_N\int_0^t\sum_{k_1-k_2+k_3=k}e^{it'\Omega}\eta\bigg(\frac{k_1-k_2}{N^\varepsilon}\bigg)V_{k_1-k_2}\\&\times\sum_{k_1',k_2',k_3'}(h^{N_1,L_1})_{k_1k_1'}(t')\overline{(h^{N_2,L_2})_{k_2k_2'}(t')}(h^{N_3,L_3})_{k_3k_3'}(t')(F_{N_1})_{k_1'}\overline{(F_{N_2})_{k_2'}}(F_{N_3})_{k_3'}\,\mathrm{d}t',
\end{aligned}
\end{equation} where $N_1,N_2\in\{N,N/2\}$ and $N_3=N$. In (\ref{termIV}) we consider two cases, depending on whether there is a pairing $k_1'=k_2'$ or $k_2'=k_3'$, or not.

\emph{Case 1: no pairing}. Assume that $k_2'\not\in\{k_1',k_3'\}$, then we take the Fourier transform in the time variable $t$, and repeat the first part of the arguments in Section \ref{case2-1}, to reduce to estimating the $\ell_k^2$ norm of the quantity
\begin{multline}\label{quantity4}\Qc_k:=\sum_{\substack{k_1-k_2+k_3=k\\|k|^2-|k_1|^2+|k_2|^2-|k_3|^2=\Omega_0}}\eta\bigg(\frac{k_1-k_2}{N^\varepsilon}\bigg)V_{k_1-k_2}\\\times\sum_{k_1'}h_{k_1k_1'}^{(1)}(F_{N_1})_{k_1'}\cdot\sum_{k_2'}\overline{h_{k_2k_2'}^{(2)}(F_{N_2})_{k_2'}}\cdot\sum_{k_3'}h_{k_3k_3'}^{(3)}(F_{N_3})_{k_3'}.
\end{multline} In (\ref{quantity4}) we assume that $|k_j|\sim N$ and $|k_1-k_2|\sim R\lesssim N^\varepsilon$, and that the matrices $h^{(j)}$ is either identity or satisfies that
\[\|h^{(j)}\|_{k_j\to k_j'}\lesssim L_j^{-1/2+3\varepsilon_1},\quad \|h^{(j)}\|_{k_jk_j'}\lesssim N^{1+\delta}L_j^{-1/2+2\varepsilon_1},\] and moreover we may assume $h^{(j)}$ is supported in $|k_j-k_j'|\lesssim L_j N^{\delta}$ by inserting a cutoff exploiting (\ref{apriori1.5}). The $\ell_k^2$ norm for $\Qc_k$ can the be estimated using Proposition \ref{trim} in the same way as in Section \ref{case2-2}, either jointly in $(k_1',k_2',k_3')$ if each $N_j=N$ or first in those $k_j$ with $N_j=N$ and then in those $k_j$ with $N_j=N/2$, so that $N$-certainly we have (with the base tensor $h^{\mathrm{b}}$ defined as in Sections \ref{case2-1} and \ref{case2-2} above)
\[\|\Qc\|_{\ell^2}\lesssim N^\delta\cdot N^{-3}\|h^{\mathrm{b}}\|_{kk_1k_2k_3}\prod_{j=1}^3\|h^{(j)}\|_{k_j\to k_j'}\lesssim N^{-3+\delta}\cdot N^{3/2}RN\cdot R^{-\beta}\lesssim N^{-1/2+\varepsilon_1/2}\] using Lemma \ref{counting}, which is enough for (\ref{apriori1.8}).

\emph{Case 2: pairing}. We now consider the cases when $k_1'=k_2'$ or $k_2'=k_3'$. First, if $k_2'=k_3'$, then we can apply the reduction arguments as above and reduce to
\begin{equation}\label{quantity5}\Qc_k:=\sum_{\substack{k_1-k_2+k_3=k\\|k|^2-|k_1|^2+|k_2|^2-|k_3|^2=\Omega_0}}\eta\bigg(\frac{k_1-k_2}{N^\varepsilon}\bigg)V_{k_1-k_2}\cdot\sum_{k_1'}h_{k_1k_1'}^{(1)}(F_{N_1})_{k_1'}\cdot\widetilde{h}_{k_2k_3}
\end{equation} where 
\[(\widetilde{h})_{k_2k_3}=\sum_{k_2'}\frac{1}{\<k_2'\>^2}\overline{h_{k_2k_2'}^{(2)}}\cdot h_{k_3k_2'}^{(3)};\quad \|\widetilde{h}\|_{k_2k_3}\lesssim N^{-2}\min(\|h^{(2)}\|_{k_2\to k_2'}\|h^{(3)}\|_{k_3k_3'},\|h^{(2)}\|_{k_2k_2'}\|h^{(3)}\|_{k_3\to k_3'}).\] Note that $h^{(2)}$ and $h^{(3)}$ cannot both be identity as $k_2\neq k_3$. Now if $\max(L_2,L_3)\leq N/2$ then due to independence, applying similar arguments as before we can estimate $N$-certainly that
\[\|\Qc\|_{\ell^2}\lesssim N^\delta N^{-1}\cdot\|h^{(1)}\|_{k_1\to k_1'}\|h^{\mathrm{b}}\|_{kk_1\to k_2k_3}\|\widetilde{h}\|_{k_2k_3}\lesssim N^{-2+2\varepsilon+4\delta},\] using the constraint $|k_1-k_2|\lesssim N^\varepsilon$, which is enough for (\ref{apriori1.8}).; if $\max(L_2,L_3)=N$ then we can gain a negative power of this value and view $F_{N_1}$ simply as an $H^{-1/2-}$ function (without considering randomness) and bound 
\[\|\Qc\|_{\ell^2}\lesssim N^\delta N^{1/2+\delta}\cdot\|h^{(1)}\|_{k_1\to k_1'}\|h^{\mathrm{b}}\|_{kk_1\to k_2k_3}\|\widetilde{h}\|_{k_2k_3}\lesssim N^{-1+2\varepsilon+4\delta},\] which is also enough for (\ref{apriori1.8})..

Finally consider the case $k_1'=k_2'$, so in particular $N_1=N_2$. We will sum over $L_1$ and $L_2$ in order to exploit the cancellation (\ref{unitarity}) (as $k_1\neq k_2$); this leads to the expression
\[\sum_{k_1'}\frac{1}{\<k_1'\>^2}(H^{N_1})_{k_1k_1'}(t')\overline{(H^{N_1})_{k_2k_1'}(t')}\] where again we have replaced $|g_{k_1'}|^2$ by $1$ as before. Since $k_1\neq k_2$, by (\ref{unitarity}), we may replace $\<k_1'\>^{-2}$ in the above expression by $\<k_1'\>^{-2}-\<k_1\>^{-2}$. Then, decomposing in $L_1$ and $L_2$ again and taking Fourier transform in $t$ and repeating the reduction steps as before, we arrive at the quantity
\begin{equation}\label{quantity6}
\Qc_k:=\sum_{\substack{k_1-k_2+k_3=k\\|k|^2-|k_1|^2+|k_2|^2-|k_3|^2=\Omega_0}}\eta\bigg(\frac{k_1-k_2}{N^\varepsilon}\bigg)V_{k_1-k_2}\cdot\widetilde{h}_{k_1k_2}\cdot\sum_{k_3'}h_{k_3k_3'}^{(3)}(F_{N_3})_{k_3'}
\end{equation} where
\[\widetilde{h}_{k_1k_2}=\sum_{k_1'}\bigg(\frac{1}{\<k_1'\>^2}-\frac{1}{\<k_1\>^2}\bigg)h_{k_1k_1'}^{(1)}\overline{h_{k_2k_1'}^{(2)}}\] with $h^{(j)}$ as above. Note that may assume $|k_1-k_1'|\lesssim N^\delta\min(L_1,N^\varepsilon+L_2)\lesssim N^{\varepsilon+\delta}\min(L_1,L_2)$ in view of $|k_1-k_2|\lesssim N^\varepsilon$, it is easy to show, assuming $\min(L_1,L_2)=L$, that
\[\|\widetilde{h}\|_{k_1k_2}\lesssim N^{\varepsilon+\delta}\cdot LN^{-3}\|h^{(1)}\|_{k_1k_1'}\|h^{(2)}\|_{k_2\to k_2'}\lesssim N^{-2}N^{\varepsilon+\delta+4\varepsilon_1}.\] Since $\max(L_1,L_2)\leq N/2$, using independence and arguing as before, we can estimate that $N$-certainly,
\[\|\Qc\|_{\ell^2}\lesssim N^\delta\cdot N^{-1}\|h^{(3)}\|_{k_3\to k_3'}\cdot\|h^{\mathrm{b}}\|_{kk_3\to k_1k_2}\cdot \|\widetilde{h}\|_{k_1k_2}\lesssim N^{-1+\varepsilon+2\delta+4\varepsilon_1}\] which is enough for (\ref{apriori1.8}).. This completes the estimate for term IV.
\subsubsection{Properties of the matrix $M^N-H^N$} Before studying term V, we first establish some properties of the matrix $Q^N:=M^N-H^N=(Q^N)_kk'(t)$ such that
\begin{equation}\label{matrixQ}(\rho^N)_k(t)=\sum_{k'}(Q^N)_{kk'}(t)(F_N)_{k'}.\end{equation}
\begin{lem} \label{decomposeQ}Let $\varepsilon':=\sqrt{\varepsilon}$ so that $(\varepsilon_1\ll)\,\varepsilon\ll\varepsilon'\ll 1$. Then we have \begin{equation}\label{basicQ}\|Q^N\|_{Y^{1-b}(J)}+\sup_{t\in J}\|(Q^N)_{kk'}(t)\|_{k\to k'}\lesssim N^{-1/2+3\varepsilon_1},\quad \|Q^N\|_{Z^b(J)}\lesssim N^{1/2+2\varepsilon_1}.\end{equation} Moreover we can decompose $Q^N=Q^{N,\ll}+Q^{N,\mathrm{rem}}$ such that $\|Q^{N,\mathrm{rem}}\|_{Z^b(J)}\lesssim N^{1/2+2\varepsilon_1-\varepsilon'/4}$, and that
\begin{equation}\label{rhoNrem}\|\rho^{N,\mathrm{rem}}\|_{X^b(J)}\lesssim N^{-1/2+2\varepsilon_1-\varepsilon'/4},\quad\textrm{where}\quad (\rho^{N,\mathrm{rem}})_k(t)=\sum_{k'}(Q^{N,\mathrm{rem}})_{kk'}(t)(F_N)_{k'}.
\end{equation} Moreover $Q^{N,\ll}$ can be decomposed into at most $N^{C\varepsilon'}$ terms. For each term $Q$ there exist vectors $\ell^*,m^*$ such that $|\ell^*|,|m^*|\lesssim N^{\varepsilon'}$, and that $(\widehat{Q})_{kk'}(\lambda)$ is a linear combination (in the form of some integral\footnote{Strictly speaking this means $(\widehat{Q})_{kk'}(\lambda)=\int a(\mu)\mathbf{1}_{k'-k=\ell^*}\cdot\Yc_{\ell^*,m^*}(k,\lambda,\mu)\cdot\Rc_{\ell^*,m^*}(k,\mu)$ where the integration is taken over some Euclidean space, $a(\mu)\in L^1$, and the bounds for $\Rc$ and $\Yc$ are uniform in $\mu$.}), with summable coefficients, of expressions of form
\begin{equation}\label{lem5.1:form}
\mathbf{1}_{k'-k=\ell^*}\cdot\Yc_{\ell^*,m^*}(k,\lambda)\cdot\Rc_{\ell^*,m^*}(k)
\end{equation} where $\Yc$ is independent with $(F_N)_k$, and $|\Yc|\lesssim 1$ and $\Rc(k)$ \emph{depends only on $m^*\cdot k$}, moreover we have \begin{equation}\label{lem5.1:est1}\|\Rc\|_{\ell_k^2}\lesssim N^{1/2+2\varepsilon_1+C\varepsilon'},\quad \|\<\lambda\>^{b}\Yc\|_{L_\lambda^2\ell_k^\infty}\lesssim N^{C\varepsilon'}.\end{equation}
\end{lem}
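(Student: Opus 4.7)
The plan is to parallel the analysis of $\rho^N$ in Section 5, but at the level of the operator $Q^N$ rather than its image $\rho^N = Q^N F_N$. Subtracting the defining equations for $H^N$ and $M^N$ and writing $M^N = H^N + Q^N$, one sees that $Q^N$ solves a linear operator equation $Q^N = \Hs^N \circ S$, where $\Hs^N = (1-\Ls^N)^{-1}$ was constructed in Section 4.2, and the source is
\[S_{kk'}(t) = -i\int_0^t \Delta_N\big[\Mc^{\ll}(v_N,v_N,\cdot) - \Mc^{\ll}(v_{N/2},v_{N/2},\cdot)\big]\big(H^N + Q^N\big)e_{k'}(t')\,\mathrm{d}t'.\]
A bootstrap on $\|Q^N\|_{Z^b(J)}$ and $\|Q^N\|_{Y^{1-b}(J)}$, using the mapping properties of $\Hs^N$ from Section 4.2 and writing $v_N - v_{N/2} = y_N$ to extract a smallness factor $\sim N^{-1/2+\varepsilon_1}$ in one slot of $\Mc^{\ll}$, recovers (\ref{basicQ}). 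The tensor-norm contractions parallel those carried out for terms II--IV in Section 5, the only novelty being that $F_N$ is now replaced by a free matrix index $k'$.

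For the decomposition, the plan is to separate the ``main'' contribution to $S$ from the rest. After expanding $y_N = F_N + \sum_L \zeta^{N,L} + \rho^N + z_N$ and decomposing each input matrix into its $h^{N',L'}$ building blocks (including $H^N$ in the third slot), the main term arises when each Gaussian factor is $F_N$ or a small-$L$ building block. In the prototype piece $\Mc^{\ll}(F_N,F_N,H^N e_{k'})$ the constraint $|k_1-k_2|\lesssim N^\varepsilon$ forces the total momentum transfer $k'-k$ to equal some small vector $\ell^*$, and the phase $e^{it\Omega}$ simplifies, via
\[|k_1|^2-|k_2|^2 = 2\ell^*\cdot k_2 + |\ell^*|^2, \qquad |k_3|^2-|k|^2 = -2\ell^*\cdot k + |\ell^*|^2,\]
into $e^{-2it\ell^*\cdot k}$ times a factor depending only on the pairing variables. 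Iterating through the Neumann series for $\Hs^N$, at most $N^{C\varepsilon'}$ such small-transfer blocks compose before the cumulative transfer exceeds $N^{\varepsilon'}$, producing a sum of at most $N^{C\varepsilon'}$ terms of the form (\ref{lem5.1:form}), with $m^*$ the aggregated phase vector (the sum of the $\ell^*$'s along the chain) and $\Rc(k)$ depending on $k$ only through $m^*\cdot k$ via the accumulated exponential.

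The main obstacle will be the sharp bound $\|\Rc\|_{\ell_k^2} \lesssim N^{1/2+2\varepsilon_1+C\varepsilon'}$. Since $\Rc$ depends on $k$ only through the scalar $m^*\cdot k$, summing over $\{k\in\Zb^3 : \<k\>\lesssim N\}$ gives $\sim N^2$ values of $k$ per level set of $m^*\cdot k$; the $\ell^2_k$ norm of $\Rc$ thus reduces to $N^{1+C\varepsilon'}$ times the $\ell^2$ norm of a one-dimensional sequence, which is a random Wick-type quadratic form in the low-frequency Gaussians indexed by a single scalar parameter. Proposition \ref{trim}, combined with a meshing argument as in Section \ref{rem} over the $N^{C\varepsilon'}$ possible values of that parameter, delivers the bound. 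The remainder $Q^{N,\mathrm{rem}}$ collects the subleading source pieces: those where one input is $\rho^N$, $z_N$, or a building block $\zeta^{N,L}$ with $L\gtrsim N^{\varepsilon'/4}$, and those where the cumulative momentum transfer exceeds $N^{\varepsilon'}$ (giving a gain $\lesssim N^{-\beta\varepsilon'}$ from the potential factor $V_{k_1-k_2}$). Each of these improves the baseline $Z^b$-estimate by at least $N^{-\varepsilon'/4}$, and (\ref{rhoNrem}) then follows by applying Proposition \ref{trim} to the independent Gaussians $F_N$ in the factorization $\rho^{N,\mathrm{rem}} = Q^{N,\mathrm{rem}} F_N$.
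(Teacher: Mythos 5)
Your skeleton matches the paper's: the fixed-point identity $Q^N=\Hs^N\Ms(H^N+Q^N)$ (with $\Ms z=\sum_{\max(N_1,N_2)=N}\Delta_N\Mc^{\ll}(y_{N_1},y_{N_2},z)$) and its Neumann series give (\ref{basicQ}) by iterating the operator bounds of Section \ref{linearbound}, and the main term is the first iterate with low-frequency--truncated outer matrices. But the final step of your argument for (\ref{rhoNrem}) fails: $Q^{N,\mathrm{rem}}$ is \emph{not} independent of $F_N$, because the middle block $\Ms$ is built from $y_{N_1},y_{N_2}$ with $\max(N_1,N_2)=N$ and therefore contains the very Gaussians $(F_N)_{k'}$ you want to sum against. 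Proposition \ref{trim} is not applicable to the factorization $\rho^{N,\mathrm{rem}}=Q^{N,\mathrm{rem}}F_N$; one must return to the Duhamel expansion of $\rho^{N,\mathrm{rem}}$ and rerun the pairing/no-pairing multilinear analysis of Section \ref{estrhoN} on the remainder pieces (this is what the paper does, at a harmless loss $N^{O(\varepsilon_1)}$ against the gain $N^{\varepsilon'/4}$). The same confusion appears when you describe $\Rc$ as a quadratic form in the \emph{low}-frequency Gaussians: it is a quadratic form in the \emph{high}-frequency Gaussians, and that is exactly why the lemma needs the ``$\Rc(k)$ depends only on $m^*\cdot k$'' structure at all --- it is the substitute for the independence that fails for the $\Ms$ factor.

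Two further structural points. First, the localization $k'-k=\ell^*$ with $|\ell^*|\lesssim N^{\varepsilon'}$ does not follow from the constraint $|k_1-k_2|\lesssim N^{\varepsilon}$ in $\Mc^{\ll}$ alone: the outer blocks transfer momentum up to $O(N^{1-\delta})$ (a single application of $\Ls^N$ already does, since $\Mc^<$ only restricts the transfer to $N^{1-\delta}$), so the picture of ``$N^{C\varepsilon'}$ small-transfer blocks composing until the cumulative transfer exceeds $N^{\varepsilon'}$'' does not apply to $\Hs^N$ or $H^N$. One must replace them by $\Hs^{N,N^{\varepsilon'}}$ and $H^{N,N^{\varepsilon'}}$, the differences going into $Q^{N,\mathrm{rem}}$ with gain $L^{-1/2+3\varepsilon_1}\lesssim N^{-\varepsilon'/4}$ for $L\geq N^{\varepsilon'}$ --- not a gain $N^{-\beta\varepsilon'}$ from $V$, which only sees the transfer inside a single $\Mc^{\ll}$ block and is $O(1)$ there. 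Second, $m^*$ is not the aggregated transfer $\ell^*$: the $k$-dependence of the $F_N$-dependent factor comes solely from the phase $|k_1|^2-|k_2|^2$ of the middle $\Ms$ block, so $m^*=k_1-k_2$ is the transfer across that block alone, while the phases of the outer blocks sit inside $\Yc$, which is independent of $F_N$ and only needs $|\Yc|\lesssim 1$. This is why the lemma carries two distinct vectors $\ell^*$ and $m^*$.
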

\begin{rem}
Lemma \ref{decomposeQ} plays an important role in Section \ref{subsec:TermV} when estimating Term V. In particular,  we will exploit the one-dimensional extra independence of $\Rc(k)$ with $(F_N)_k$, since $\Rc(k)$ depends only on $m^*\cdot k$ instead of on $k$.
\end{rem}

\begin{proof} By definition of $\xi^N$ and $\psi^N$ in (\ref{defxin3}) and (\ref{eqnrhon})--(\ref{eqnrhon3}), as well as the associated matrices, we have the identity 
\[(Q^N)_{kk'}(t)=\sum_{k_1}\int_0^t (\Hs^N\Ms)_{kk_1}(t,t_1)(H^N-Q^N)_{k_1k'}(t_1)\,\mathrm{d}t_1\]
and hence we have
\begin{equation}\label{identity}(Q^N)_{kk'}(t)=\sum_{n=1}^\infty(-1)^{n-1}\sum_{k_1}\int_0^t\{(\Hs^N\Ms)^n\}_{kk_1}(t,t_1)(H^N)_{k_1k'}(t_1)\,\mathrm{d}t_1,\end{equation}where $\Hs^N=\Hs^{N,N/2}$ is defined in Section \ref{linearbound}, and $\Ms$ denotes the operator
\begin{equation}\label{defM}z\mapsto \sum_{\max(N_1,N_2)=N}\Delta_N\Mc^{\ll}(y_{N_1},y_{N_2},z).\end{equation} 

The bounds in (\ref{basicQ}) then follow from iterating like in Section \ref{linearbound} using the bounds (\ref{aux1})--(\ref{aux5}) (together with the $X^\alpha\to X^\alpha$ bounds) for the operators $\Hs^N$ and $\Ms$, where the bounds for $\Ms$ is proved in the same way as in Sections \ref{operbound} and \ref{linearbound}. Moreover, in (\ref{identity}) if we assume $n\geq 2$ or replace $\Hs^N$ by $\Hs^N-\Hs^{N,N^{\varepsilon'}}$ (or $H^N$ by $H^N-H^{N,N^{\varepsilon'}}$) then the corresponding bounds can be improved by $N^{-\varepsilon'/4}$, and the resulting terms can be put in\footnote{To prove (\ref{rhoNrem}), we may repeat the proofs above for terms I--IV, and then treat V in the same way as II. This leads to a loss of $N^{O(\varepsilon_1)}$, which will be negligible compared to the gain $N^{\varepsilon'/4}$.} $Q^{\mathrm{rem}}$. As for the remaining contribution, we can write
\[(Q^{N,\ll})_{kk'}(t)=\sum_{k_1,k_2}\int\Hs_{kk_1}^{N,N^{\varepsilon'}}(t,t_1)\Ms_{k_1k_2}(t_1,t_2)(H^{N,N^{\varepsilon'}})_{k_2k'}(t_2)\,\mathrm{d}t_1\mathrm{d}t_2,\] hence\begin{equation}\label{express}(\widehat{Q^{N,\ll}})_{kk'}(\lambda)=\sum_{k_1,k_2}\int (\Fc\Hs^{N,N^{\varepsilon'}})_{kk_1}(\lambda,\lambda_1)(\Fc\Ms)_{k_1k_2}(\lambda_1,\lambda_2)(\Fc H^{N,N^{\varepsilon'}})_{k_2k'}(\lambda_2)\,\mathrm{d}\lambda_1\mathrm{d}\lambda_2.\end{equation} We may assume $|k-k_1|\lesssim N^{\varepsilon'}$ and the same for $k_1-k_2$ (using the definition of $\Ms$) and $k_2-k'$, so at a loss of $N^{C\varepsilon'}$ we may fix the values of $k-k_1$, $k_1-k_2$ and $k_2-k'$. Note that the matrices $\Fc\Hs^{N,N^{\varepsilon'}}$ and $\Fc\Ms$ satisfy the bounds (\ref{aux3})--(\ref{aux5}); moreover in (\ref{aux5}) we may replace the unfavorable exponents $\<\lambda\>^{2(1-b)}\<\lambda'\>^{-2b}$ by the favorable ones $\<\lambda\>^{2b}\<\lambda'\>^{-2(1-b)}$, at the price of replacing the right hand side by a small positive power $N^{C\kappa^{-1}}$, by repeating the interpolation argument in Section \ref{linearbound}. Using these bounds, we then see that the integral (\ref{express}) provides the required linear combination. Here summability of coefficients follows from the estimate
\begin{multline}\label{lem5.1:lambda}\int_{\Rb^2}A(\lambda_1)B(\lambda_1,\lambda_2)C(\lambda_2)\,\mathrm{d}\lambda_1\mathrm{d}\lambda_2\\
\lesssim\|\<\lambda_1\>^{-(1-b)}A(\lambda_1)\|_{L^2}\cdot \|\<\lambda_1\>^{b}\<\lambda_2\>^{-(1-b)}B(\lambda_1,\lambda_2)\|_{L^2}\cdot\|\<\lambda_2\>^bC(\lambda_2)\|_{L^2}\end{multline} and the improved versions of (\ref{aux3})--(\ref{aux5}). 
Recall that  $k-k_1$, $k_1-k_2$ and $k_2-k'$ are all fixed. We set that $\ell^*:= (k_1-k)+(k_2-k_1)+(k'-k_2)=k'-k$ and $m^*:=k_1-k_2$.
Finally, for fixed $(\lambda_1,\lambda_2)$\footnote{In fact, to reach the heart of the matter easily, we don't show all details about the $\lambda_1$, $\lambda_2$ here but it could be seen by using (\ref{lem5.1:lambda}) and its above argument about $\lambda's$.},
we set $\Yc_{\ell^*,m^*}(k,\lambda):= (\Fc\Hs^{N,N^{\varepsilon'}})_{kk_1}(\lambda,\lambda_1)(\Fc H^{N,N^{\varepsilon'}})_{k_2k'}(\lambda_2)$  and $\Rc_{\ell^*,m^*}(k):= (\Fc\Ms)_{k_1k_2}(\lambda_1,\lambda_2)$.
The factors coming from $H^{N,N^{\varepsilon'}}$ and $\Hs^{N,N^{\varepsilon'}}$ are independent from $(F_N)_k$, while the factor coming from $\Ms$ \emph{depends on $k_1$ only via the quantity $|k_1|^2-|k_2|^2$} in view of the definition (\ref{defM}), hence the desired decomposition is valid because $|k_1|^2-|k_2|^2$ equals $m^*\cdot k$ plus a constant once the above-mentioned difference vectors are all fixed. Also the bounds of $\Rc$ and $\Yc$ in (\ref{lem5.1:est1}) can be easily proved by the above setting of $\Rc$ and $\Yc$ together with the bounds (\ref{aux1})--(\ref{aux5}) (together with the $X^\alpha\to X^\alpha$ bounds) for the operators $\Hs^N$ and $\Ms$.
\end{proof}
\subsubsection{Term V}\label{subsec:TermV} Now, let us consider term V as defined in the introduction of Section \ref{sectionIII}.
In the following proof of estimating term V, we will fully use the cancellation in (\ref{unitarity}) together with Lemma \ref{decomposeQ}.
 We may assume $N_1=N_2=N$, because if $N_1\neq N_2$, then in later expansions we must have $k_1'\neq k_2'$ (so the cancellation in (\ref{unitarity}) is not needed), and the proof will go in the same way; if $N_1=N_2=N/2$ then the same cancellation holds and again we have the same proof. Now, recall that $\rho^N=\xi^N-\psi^N$, and that
\begin{equation}\label{xinmatrix2}(\xi^N)_k(t)=\sum_{k'}(M^N)_{kk'}(t)(F_N)_{k'},\quad (\psi^N)_k(t)=\sum_{k'}(H^N)_{kk'}(t)(F_N)_{k'},\end{equation} as in (\ref{raoterm}) and (\ref{defxin3}) and that $M^N$ and $H^N$ both satisfy the equality (\ref{unitarity}). Using this cancellation (when $k_1'=k_2'$ in the expansion) in the same way as Section \ref{sectionIV}, and by repeating the reduction steps before we can reduce to estimating the quantity that is either
\begin{multline}\label{quantity7}\Qc_k:=\sum_{\substack{k_1-k_2+k_3=k\\|k|^2-|k_1|^2+|k_2|^2-|k_3|^2=\Omega_0}}\eta\bigg(\frac{k_1-k_2}{N^\varepsilon}\bigg)V_{k_1-k_2}\\\times\sum_{k_1'\neq k_2'}Q_{k_1k_1'}(F_{N_1})_{k_1'}\cdot\overline{P_{k_2k_2'}(F_{N_2})_{k_2'}}\cdot\sum_{k_3'}h_{k_3k_3'}^{(3)}(F_{N_3})_{k_3'},
\end{multline} or
\begin{equation}\label{quantity8}
\Qc_k:=\sum_{\substack{k_1-k_2+k_3=k\\|k|^2-|k_1|^2+|k_2|^2-|k_3|^2=\Omega_0}}\eta\bigg(\frac{k_1-k_2}{N^\varepsilon}\bigg)V_{k_1-k_2}\cdot\widetilde{h}_{k_1k_2}\cdot\sum_{k_3'}h_{k_3k_3'}^{(3)}(F_{N_3})_{k_3'}
\end{equation} where
\[\widetilde{h}_{k_1k_2}=\sum_{k_1'}\bigg(\frac{1}{\<k_1'\>^2}-\frac{1}{\<k_1\>^2}\bigg)Q_{k_1k_1'}\overline{P_{k_2k_1'}}.\] Here in (\ref{quantity7}) and (\ref{quantity8}) the matrix $Q$ is coming from $Q^N$ where $Q_{k_1k_1'}=(\widehat{Q^N})_{k_1k_1'}(\lambda)$ for some fixed $\lambda$; similarly $P$ is coming from either $Q^N$ or $h^{N,L_2}$, and $h^{(3)}$ is coming from $h^{N,L_3}$ in the same way.

First we consider (\ref{quantity8}). By losing a power $N^{C\varepsilon}$ we may fix the values of $k_1-k_2$ and $k-k_3$, then we will estimate $\Qc$ using $\|h^{\mathrm{b}}\|_{k_1k_2\to kk_3}\lesssim N^{2+C\varepsilon}$, and we have these bounds
\[\sup_{k_3}\bigg|\sum_{k_3'}h_{k_3k_3'}^{(3)}(F_{N_3})_{k_3'}\bigg|\lesssim N^{O(\varepsilon_1)}\cdot N^{-1}L_3^{-1/2};\quad \|\widetilde{h}\|_{k_1k_2}\lesssim N^{O(\varepsilon_1)}\cdot L_{2}N^{-3}N^{1/2}L_2^{-1/2}\] (with $L_2=N$ if $P$ is coming from $Q^N$), where the first bound above follows from Proposition \ref{trim} for each fixed $k_3$, and the second bound follows from estimating $\|\widetilde{h}\|_{k_1k_2}\lesssim L_2N^{-3}\|Q\|_{k_1k_1'}\|P\|_{k_2\to k_2'}$. This leads to
\[\|\Qc\|_{\ell^2}\lesssim N^{O(\varepsilon)}\cdot N^{2}N^{-1}\cdot L_{2}N^{-3}\cdot N^{1/2}L_2^{-1/2}\lesssim N^{-1+C\varepsilon}\] which is enough.

Now we consider (\ref{quantity7}). If $P$ is coming from $Q^N$, then in (\ref{quantity7}) we may remove the condition $k_1'\neq k_2'$, reducing essentially to the expression in (\ref{eqnrhon3}) with both $w_1$ and $w_2$ replaced by $\rho^N$, which is estimated in the same way as in Section \ref{case2-1}. On the other hand, the term when $k_1'=k_2'$ can be estimates in the same way as (\ref{quantity8}) above. The same argument applies if $P$ is coming from $h^{N,L_2}$ and $\max(L_2,L_3)\geq N^{\varepsilon'}$, where we can gain a power $N^{-\varepsilon'/4}$ from either $L_2$ or $L_3$, or if $Q$ is coming from $Q^{N,\mathrm{rem}}$, where we can gain extra powers $N^{-\varepsilon'/4}$ using Lemma \ref{decomposeQ}.

Finally, consider (\ref{quantity7}), assuming $\max(L_2,L_3)\leq N^{\varepsilon'}$, and that $Q$ comes from $Q^{N,\ll}$ in Lemma \ref{decomposeQ}. By losing at most $N^{C\varepsilon'}$ we may fix the values of $k_1-k_2$, $k-k_3$, $k_2-k_2'$, $k_3-k_3'$, and consider one single component of $Q^{N,\ll}$ described as in Lemma \ref{decomposeQ}. Then there are only two independent variables---namely $k$ and $k_1$---and we essentially reduce (\ref{quantity7}) to
\begin{equation}
\label{eqn:Qk}\Qc_k=\widetilde{g_k}\cdot\sum_{k_1:\ell\cdot(k+k_1)=\Omega_0}\Ac\cdot\mathbf{1}_{k'_1-k_1=\ell^*}\cdot\frac{1}{\<k_1'\>\<k_2'\>}\Yc(k_1)\Rc(k_1)\overline{P_{k_2k_2'}}\cdot g_{k_1'}\overline{g_{k_2'}}.
\end{equation} Here $|\Ac|\lesssim 1$ is a non-probabilistic factor, $|\ell|, |\ell^*| \lesssim N^{\varepsilon'}$ are fixed vectors, $\Yc=\Yc(k_1)$ and $\Rc=\Rc(k_1)$ are as in Lemma \ref{decomposeQ}, and $P=P_{k_2k_2'}$ is defined as above. Moreover we know that $\Yc$ and $P$ are independent from $g_{k_1'}$ and $g_{k_2'}$, that $\Rc(k_1)$ depends only on $m^*\cdot k_1$ for some fixed vector $|m^*|\lesssim N^{\varepsilon'}$, and that $|P|\lesssim N^{O(\varepsilon)}$, $|\Yc|\lesssim N^{O(\varepsilon)}$ and $\|\Rc\|_{\ell^2}\lesssim N^{1/2+O(\varepsilon)}$ (after fixing $\lambda$ as before). Finally $\widetilde{g_k}$ in \ref{eqn:Qk} is $\sum_{k_3'}h_{k_3k_3'}^{(3)}(F_{N_3})_{k_3'}$  bounded by $|\widetilde{g_k}|\lesssim N^{-1}$.

Since $\Rc(k_1)$ only depends on $m^*\cdot k_1$, if we fix the value of $m^*\cdot k_1$ in the above summation, then $\Rc(k_1)$ can be extracted as a common factor and for the rest of the sum we can apply independence (using Proposition \ref{trim}) and get
\[\begin{aligned}
|\Qc_k|&\lesssim |\widetilde{g_k}|\cdot\sum_a |\Rc(a)|\cdot\bigg(\sum_{k_1\in S_{a,k}} \big|\frac{1}{\<k_1'\>\<k_2'\>}\Yc(k_1)\Rc(k_1)\overline{P_{k_2k_2'}}\big|^2\bigg)^{1/2}
\\
& \lesssim N^{-3+O(\varepsilon)}\cdot \sum_a |\Rc(a)|\cdot |S_{a,k}|^{1/2},
\end{aligned}\]
where $\Rc(a) = \Rc(k_1)$ for any $k_1\cdot m^*=a$ and $S_{a,k}:= \{k_1\in \mathbb Z^3: \ell\cdot(k_1+k)=\Omega_0, k_1\cdot m^*=a\}$.
 Note that in the above estimate we are dividing the set of possible $k_1$'s into subsets $S_{a,k}$ where $\ell\cdot k_1$ equals some constant, and $m^*\cdot k_1$ equals another constant, and that $S_{a,k}$ is either empty or has cardinality $\geq N^{1-C\varepsilon'}$. 
 When $S_{a,k}=\varnothing$, $|\Qc_k|=0$. When $S_{a,k}\neq\varnothing$, we have $|S_{a,k}|\geq N^{1-C\varepsilon'}$ and hence
\[|\Qc_k| \lesssim N^{C\varepsilon'-7/2}\cdot \sum_a |\Rc(a)|\cdot |S_{a,k}|=N^{C\varepsilon'-7/2}\cdot \sum_{k_1:\ell\cdot(k+k_1)=\Omega_0}|\Rc(k_1)|.\]
 Then, using Schur's bound, we get that
\[\|\Qc\|_{\ell^2}\lesssim N^{C\varepsilon'-7/2}N^2\|\Rc\|_{\ell^2}\lesssim N^{-1+C\varepsilon'}\] which is enough for (\ref{apriori1.8}). This completes the proof for $\rho^N$.
\subsection{An extra improvement} For the purpose of Section \ref{remainder}, we need an improvement for the $\rho^N$ bound in (\ref{apriori1.8}), namely the following.
\begin{prop}\label{improve} Let $N=M$, $Y\in\Rb$ be any constant, and consider $\rho^*$ defined by
\[(\rho^*)_k(t)=(\rho^N)_k(t)\cdot\mathbf{1}_{Y\leq |k|^2\leq Y+N^{\varepsilon'}}\] then $N$-certainly we can improve (\ref{apriori1.8}) to $\|\rho^*\|_{X^b(J)}\leq N^{-1/2+\varepsilon_1/2}$. Note that this bound id better than the bound for $z_N$ in (\ref{apriori1.8}) (which is better than the bound for $\rho^N$ in (\ref{apriori1.8})).
\end{prop}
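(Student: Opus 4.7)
The plan is to combine the explicit representation $(\rho^N)_k(t)=\sum_{k'}(Q^N)_{kk'}(t)(F_N)_{k'}$ from \eqref{matrixQ} with the decomposition $Q^N=Q^{N,\ll}+Q^{N,\mathrm{rem}}$ provided by Lemma~\ref{decomposeQ}, writing correspondingly $\rho^N=\rho^{N,\ll}+\rho^{N,\mathrm{rem}}$. The remainder piece is handled instantly: since the shell truncation has operator norm $\leq 1$ on $X^b$, the bound \eqref{rhoNrem} yields
\[\|\mathbf{1}_{\{Y\leq|k|^2\leq Y+N^{\varepsilon'}\}}\rho^{N,\mathrm{rem}}\|_{X^b(J)}\;\leq\;\|\rho^{N,\mathrm{rem}}\|_{X^b(J)}\;\lesssim\;N^{-1/2+2\varepsilon_1-\varepsilon'/4},\]
which is already below $N^{-1/2+\varepsilon_1/2}$ since $\varepsilon'=\sqrt{\varepsilon}\gg\varepsilon_1$. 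So it remains to establish the same bound for the shell truncation of $\rho^{N,\ll}$.

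For this, Lemma~\ref{decomposeQ} expresses $Q^{N,\ll}$ as a sum of at most $N^{C\varepsilon'}$ elementary pieces of the form \eqref{lem5.1:form}, each producing a contribution to $\rho^{N,\ll}$ of the shape $\Yc_{\ell^*,m^*}(k,\lambda)\,\Rc_{\ell^*,m^*}(k)\,(F_N)_{k+\ell^*}$, with the crucial structural feature that $\Rc(k)$ depends on $k$ only through the scalar $m^*\cdot k$ for some $|m^*|\lesssim N^{\varepsilon'}$. For each such term I will apply Fubini together with the $L^2_\lambda\ell^\infty_k$ bound from \eqref{lem5.1:est1} on $\Yc$ and the deterministic maximum bound $\|F_N\|_{\ell^\infty_k}\lesssim N^{-1+\theta}$ (valid $N$-certainly thanks to $\max_k|g_k|\lesssim N^\theta$), which gives
\[\|\mathbf{1}_{\{Y\leq|k|^2\leq Y+N^{\varepsilon'}\}}\,\Yc\,\Rc\,(F_N)_{\cdot+\ell^*}\|_{X^b}^2\;\lesssim\;N^{C\varepsilon'-2+\theta}\sum_{Y\leq|k|^2\leq Y+N^{\varepsilon'}}|\Rc(k)|^2.\]
Using this pointwise $\ell^\infty$ bound on $F_N$ sidesteps the issue that $\Rc$ (coming from $\Ms$) may depend on the same high-frequency Gaussians as the singled-out $(F_N)_{k+\ell^*}$, so no independence argument is needed.

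The decisive step is the one-dimensional lattice counting
\[\sum_{Y\leq|k|^2\leq Y+N^{\varepsilon'}}|\Rc(k)|^2\;\lesssim\;N^{\varepsilon'-2+\theta}\,\|\Rc\|_{\ell^2_k}^2\;\lesssim\;N^{-1+4\varepsilon_1+C\varepsilon'},\]
the last step via \eqref{lem5.1:est1}. To establish it I write $\Rc(k)=\hat{\Rc}(m^*\cdot k)$ and split both sums by level sets of $m^*\cdot k$: one has $\sum_{k\in\mathrm{shell}}|\Rc(k)|^2=\sum_a|\hat{\Rc}(a)|^2 M_a^{\mathrm{shell}}$ and $\|\Rc\|_{\ell^2_k}^2=\sum_a|\hat{\Rc}(a)|^2 M_a^{B_N}$, where $M_a^{\bullet}$ counts lattice points on the plane $\{m^*\cdot k=a\}$ lying inside the shell or the ball $\{|k|\lesssim N\}$ respectively. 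The induced 2D sublattice on each plane has density $\sim|m^*|^{-1}$; its intersection with the shell is an annulus of area $O(N^{\varepsilon'})$, whereas its intersection with $B_N$ is a disk of area $\sim N^2$, yielding $M_a^{\mathrm{shell}}/M_a^{B_N}\lesssim N^{\varepsilon'-2}$ for bulk $a$ transverse to the shell sphere. Assembling and summing the $N^{C\varepsilon'}$ pieces gives $\|\mathbf{1}_{\mathrm{shell}}\rho^{N,\ll}\|_{X^b(J)}\lesssim N^{-3/2+2\varepsilon_1+C\varepsilon'}$, well below target. The main obstacle I anticipate is controlling the edge regime where the plane is nearly tangent to the shell sphere and $M_a^{B_N}$ degenerates; the fix is that there $M_a^{\mathrm{shell}}\leq M_a^{B_N}$ holds trivially and the range of such edge $a$ has length only $O(|m^*|\,N^{\varepsilon'/2})$ in $a$-space, so its contribution is absorbed in the comfortable $N^{-1}$ safety margin carried by the final bound.
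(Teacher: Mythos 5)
Your route is genuinely different from the paper's (which goes back to the trilinear terms I--V in the derivation of the $\rho^N$ bound and improves the base-tensor counting using that the shell restricts $k$ to a set $E$ with $|E|\lesssim N^{1+C\varepsilon'}$), and the reduction to Lemma \ref{decomposeQ}, the treatment of $\rho^{N,\mathrm{rem}}$ via (\ref{rhoNrem}), and the H\"older step with $\|F_N\|_{\ell^\infty}\lesssim N^{-1+\theta}$ are all fine. The gap is in the counting inequality $\sum_{k\in\mathrm{shell}}|\Rc(k)|^2\lesssim N^{\varepsilon'-2+\theta}\|\Rc\|_{\ell_k^2}^2$, specifically in the edge regime, and your proposed fix does not close it. First note that $\|\Rc\|_{\ell^2_k}$ is taken over the support of $Q^{N,\ll}$, i.e.\ the annulus $N/2<\langle k\rangle\leq N$, not a ball; since $\rho^N$ lives there, the only relevant shells have $Y\sim N^2$, so the shell sits near the \emph{boundary} of that annulus. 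Take $Y+N^{\varepsilon'}$ close to $N^2$ and $a$ with $a^2/|m^*|^2=N^2-O(N^{\varepsilon'})$: the plane $\{m^*\cdot k=a\}$ meets the annulus in a planar disk of area $O(N^{\varepsilon'})$, so $M_a^{\mathrm{ann}}\lesssim N^{\varepsilon'+\theta}$ is of the \emph{same} size as $M_a^{\mathrm{shell}}$ and the ratio is $O(1)$, not $N^{\varepsilon'-2}$. For such an $a$ the only available information is $|\hat\Rc(a)|^2M_a^{\mathrm{shell}}\leq|\hat\Rc(a)|^2M_a^{\mathrm{ann}}\leq\|\Rc\|_{\ell^2_k}^2\lesssim N^{1+4\varepsilon_1+C\varepsilon'}$; Lemma \ref{decomposeQ} gives no pointwise bound on $\Rc$ and nothing prevents its $\ell^2$ mass from concentrating on a single near-tangent plane. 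So even a single edge value of $a$ can make $\sum_{k\in\mathrm{shell}}|\Rc(k)|^2$ as large as $N^{1+4\varepsilon_1+C\varepsilon'}$, and your final bound degrades to $N^{-1/2+2\varepsilon_1+C\varepsilon'}$. Since $\varepsilon'=\sqrt{\varepsilon}\gg\varepsilon_1$, this is \emph{worse} than the target $N^{-1/2+\varepsilon_1/2}$ (indeed worse than the original $\rho^N$ bound), so the ``comfortable $N^{-1}$ safety margin'' only exists in the bulk, not where it is needed.

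The observation that there are few edge values of $a$ is therefore not the right fix: fewness of bad $a$'s does not control $|\hat\Rc(a)|^2$ at a single bad $a$. To rescue this route you would need either a pointwise (or $\ell^\infty_a$-type) bound on $\hat\Rc$, which means reopening the structure of $\Ms$ beyond what Lemma \ref{decomposeQ} records, or a gain that does not pass through the ratio $M_a^{\mathrm{shell}}/M_a^{\mathrm{ann}}$. The paper's proof sidesteps all of this: it only needs to improve term II of Section \ref{estrhoN}, and there the restriction $k\in E$ with $|E|\lesssim N^{1+C\varepsilon'}$ upgrades the relevant norms of $h^{\mathrm{b}}$ from $N$ to $|E|^{1/2}=N^{1/2+C\varepsilon'}$, a gain of $N^{-1/2}$ that trivially absorbs every $N^{C\varepsilon'}$ loss.
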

\begin{proof} We only need to examining the terms I$\sim$V in the above proof. For terms I and IV and V (hence also III), in the above proof we already obtain bounds better than $N^{-1/2+\varepsilon_1/2}$, so these terms are acceptable, and we need to study term II. Note that the definition of $\rho^*$ restricts $k$ to a set $E$ of cardinality $\leq N^{1+C\varepsilon'}$ by the standard divisor counting bound.

Let $h^{\mathrm{b}}=h_{kk_1k_2k_3}^{\mathrm{b}}$ be the base tensor, which is supported in $|k_j|\lesssim N_j\lesssim N$ and $|k_1-k_2|\sim R$, such that in the support of $h^{\mathrm{b}}$ we have $k-k_1+k_2-k_3=0$ and $|k|^2-|k_1|^2+|k_2|^2-|k_3|^2=\Omega_0$. There are three cases in term II that need consideration:

(1) The case in Section \ref{case2-1}. Here the bound (\ref{rhoest1}) suffices unless $\max(N_2,R)\leq N^{C\varepsilon'}$; if this happens, note that in the above proof, (\ref{rhoest1}) follows from the estimate

\[\max\big(\big\|h_{kk_1k_2k_3}^{\mathrm{b}}\big\|_{kk_2k_3\to k_1},\|h_{kk_1k_2k_3}^{\mathrm{b}}\big\|_{kk_2\to k_1k_3}\big)\lesssim N^{1+\delta}\] assuming $\max(N_2,R)\leq N^{C\varepsilon'}$. However if we further require $k\in E$, then the right hand side of the above bound can be improved to $|E|^{1/2}=N^{1/2+C\varepsilon'}$, which leads to the desired improvement of (\ref{apriori1.8}).

(2) The \emph{case 1} in Section \ref{case2-2}. Here the bound (\ref{rhoest2}) suffices unless $R\leq N^{C\varepsilon'}$; if this happens, note that (\ref{rhoest2}) follows from the estimate
\[\max\big(\|h^{\mathrm{b}}\|_{kk_2k_3\to k_1},\|h^{\mathrm{b}}\|_{kk_2\to k_1k_3},\|h^{\mathrm{b}}\|_{kk_3\to k_1k_2},\|h^{\mathrm{b}}\|_{k\to k_1k_2k_3}\big)\lesssim N^{1+\delta}N_2\] assuming $R\leq N^{C\varepsilon'}$. However if we further require $k\in E$, then the right hand side can be improved to $|E|^{1/2}N_2=N^{1/2+C\varepsilon'}N_2$, which allows for the improvement.

(3) The \emph{case 2} in Section \ref{case2-2}. Here (\ref{rhoest3}) follows from the estimate $\|h^{\mathrm{b}}\|_{kk_2k_3\to k_1}\lesssim R^{-\beta}NR$. However if we further require $k\in E$, then the right hand side can be improved to $R^{-\beta}|E|^{1/2}R=R^{-\beta}N^{1/2+C\varepsilon'}R$, which allows for the improvement. This finishes the proof.
\end{proof}

\section{The remainder terms} \label{remainder}
Now we will prove the $z_N$ part of the bound (\ref{apriori1.8}), assuming $N=M$. We will prove it by a continuity argument, so we may assume (\ref{apriori1.8}) and only need to improve it using the equation (\ref{eqnzn}); note that the smallness factor is automatic as long as we use (\ref{eqnzn}), as explained before. As such, we can assume that each input factor $w_j$s on the right hand side of (\ref{eqnzn}) has one of the following four types, where in all cases we have $N_j\leq N$:

(i) Type (G), where we define $L_j=1$, and \begin{equation}\label{input1}(\widehat{w_j})_{k_j}(\lambda_j)=\mathbf{1}_{N_j/2< \langle k_j\rangle\leq N_j}\frac{g_{k_j}(\omega)}{\langle k_j\rangle}\widehat{\chi}(\lambda_j).
\end{equation}

(ii) Type (C), where
\begin{equation}\label{input2}(\widehat{w_j})_{k_j}(\lambda_j)=\sum_{N_j/2<\langle k'_j\rangle\leq N_j}h_{k_jk'_j}^{(j)}(\lambda_j,\omega)\frac{g_{k'_j}(\omega)}{\langle k'_j\rangle},\end{equation} with $h_{k_jk'_j}^{(j)}(\lambda_j,\omega)$ supported in the set $\big\{\frac{N_j}{2}<\langle k_j\rangle\leq N_j,\frac{N_j}{2}<\langle k'_j\rangle\leq N_j\big\}$, $\mathcal{B}_{\leq L_j}$ measurable for some $L_j\leq N_j/2$, and satisfying the bounds (where in the first bound we first fix $\lambda_j$, take the operator norm, and the take the $L^2$ norm in $\lambda_j$)
\begin{equation}\label{input2+}\|\langle \lambda_j\rangle^{1-b}h_{k_jk'_j}^{(j)}(\lambda_j)\|_{L_{\lambda_j}^2(\ell_{k_j}^2\to \ell_{k_j'}^2)}\lesssim L_j^{-1/2+3\varepsilon_1},\quad \| \langle \lambda_j\rangle^{b}h_{k_jk'_j}^{(j)}(\lambda_j)\|_{\ell_{k_jk'_j}^2L_{\lambda_j}^2}\lesssim N_j^{1+\delta}L_j^{-1/2+2\varepsilon_1}.\end{equation} Moreover using (\ref{apriori1.5}) we may assume $h^{(j)}$ is supported in $|k_j-k_j'|\lesssim N^\delta L_j$. Note that if $w_j$ is of type (G), $(\widehat{w_j})_{k_j}(\lambda_j)$ can be also expressed in the same form as (\ref{input2}) but with $h_{k_jk'_j}^{(j)} =\mathbf{1}_{k_j=k'_j}\cdot \widehat{\chi}(\lambda_j)$, except the second equation in (\ref{input2+}) is not true in this case.

(iii) Type (L), where  $(\widehat{w_j})_{k_j}(\lambda_j)$ 
is supported in $\{|k_j|\sim N_j\}$, and satisfies
\begin{equation}\label{input3}\|\langle\lambda_j\rangle^b(\widehat{w_j})_{k_j}(\lambda_j)\|_{\ell_{k_j}^2L_{\lambda_j}^2}\lesssim N_j^{-1/2+\varepsilon_1+\varepsilon_2}.
\end{equation}
Also such $w_j$ is a solution to the equation (\ref{eqnrhon}).

(iv) Type (D), where $(\widehat{w_j})_{k_j}(\lambda_j)$ is supported in $\{|k_j|\lesssim N_j\}$, and satisfies
\begin{equation}\label{input4}\|\langle\lambda_j\rangle^b(\widehat{w_j})_{k_j}(\lambda_j)\|_{\ell_{k_j}^2L_{\lambda_j}^2}\lesssim N_j^{-1/2+\varepsilon_1}.
\end{equation} 

Now, let the multilinear forms $\mathcal{M}^{\circ}$, $\mathcal{M}^{<}$, $\mathcal{M}^{>}$ and $\mathcal{M}^{\ll}$ be as in (\ref{reducedeqn2}), (\ref{lownonlin}) and (\ref{lownonlin1}). The terms on the right hand side of (\ref{eqnzn}), apart from the first term in the second line of (\ref{eqnzn}) which is trivially bounded, are the followings:

(1) The term
\[\mathrm{I}=\mathcal{I}_\chi\Pi_N \mathcal{M}^{>}(w_1,w_2,w_3)\] where $w_j$ can be any type and $\max(N_1,N_2,N_3)=N$.

(2) The term
\[\mathrm{II}=\mathcal{I}_\chi\Pi_N (\Mc^{<}-\mathcal{M}^{\ll})(w_1,w_2,w_3)\] where $w_j$ can be any type and $\max(N_1,N_2)=N$.

(3) The term
\[\mathrm{III}=\mathcal{I}_\chi\Delta_N \Mc^\circ(w_1,w_2,w_3)\] where $w_j$ can be any type and $\max(N_1,N_2,N_3)\leq N/2$.

(4) The term
\[\mathrm{IV}=\mathcal{I}_\chi\Pi_{N/2} \Mc^<(w_1,w_2,w_3)\] where $w_j$ can be any type and $\max (N_1,N_2)\leq N/2$ and $N_3=N$.

(5) The term
\[\mathrm{V}=\mathcal{I}_\chi\Pi_{N/2} \Mc^\ll(w_1,w_2,w_3)\] where $w_j$ can be any type and $\max (N_1,N_2)=N_3=N$.

(6) The term 
\[\mathrm{VI}=\mathcal{I}_\chi\Delta_{N} \Mc^<(w_1,w_2,w_3)\] where $w_1$ and $w_2$ can be any type, $w_3$ has type (D) and $\max (N_1,N_2)\leq N/2$ and $N_3=N$.

(7) The term 
\[\mathrm{VII}=\mathcal{I}_\chi\Delta_{N} \Mc^\ll(w_1,w_2,w_3)\] where $w_1$ and $w_2$ can be any type, $w_3$ has type (D) and $\max (N_1,N_2)=N_3=N$.

\smallskip

(8) The term VIII, which is the last two lines of the right hans side of (\ref{eqnzn}).

\smallskip
Our goal is to recover the bound for $z_N$ in (\ref{apriori1.8}) for each of the terms I--VIII above. In doing so we will consider two cases. First is the \emph{no-pairing} case, where if $w_1$ and $w_2$ are of type (C) or (G) and hence expanded as in (\ref{input2}), then we assume $k_1'\neq k_2'$; similarly if $w_2$ and $w_3$ are of type (C) or (G) then we assume $k_2'\neq k_3'$. The second case is the \emph{pairing} case which is when $k_1'=k_2'$ or $k_2'=k_3'$ (the \emph{over-pairing} case where $k_1'=k_2'=k_3'$ is easy and we shall omit it). We will deal with the no-pairing case for terms I--VII in Sections \ref{nopairing1}--\ref{subsec:gamma}, the pairing case for these terms in Section \ref{pairing}, and term VIII in Section \ref{subsec:linear}.
\subsection{No-pairing case}\label{nopairing1} We start with the no-pairing case.
\subsubsection{Preparation of the proof}
We start with some general reductions in the no-pairing case. Recall as in Section \ref{rem} that we can always gain a smallness factor from the short time $\tau\ll 1$, and can always ignore losses of $(N_*)^{C\kappa^{-1}}$ provided we can gain a power $N^{-\varepsilon/10}$ (which will be clear in the proof). We will consider $\widehat{\Ic_\chi\Mc^{(\star)}}(w_1,w_2,w_3)_k(\lambda)$ where $\Mc^{(\star)}$ can be one of $\Pi \mathcal{M}^{\circ}$, $\Pi \mathcal{M}^{<}$, $\Pi \mathcal{M}^{>}$, $\Pi\mathcal{M}^{\ll}$ and $\Pi (\Mc^{<}-\Mc^{\ll})$ with $\Pi$ being a general notation for projections for $\Pi_N$, $\Pi_{N/2}$ and $\Delta_N$,
\begin{multline}\label{sec5:mult1}
\widehat{\Ic\Mc^{(\star)}}(w_1,w_2,w_3)_k(\lambda)=\sum^{(\star)}_{\substack{(k_1,k_2,k_3)\\k=k_1-k_2+k_3, \\k_2\notin \{k_1, k_3\}}}\int\mathrm{d}\lambda_1\mathrm{d}\lambda_2\mathrm{d}\lambda_3\, I(\lambda,\Omega+\lambda_1-\lambda_2+\lambda_3)\,\\
\times V_{k_1-k_2}\cdot (\widehat{w_1})_{k_1}(\lambda_1)\, \overline{(\widehat{w_2})_{k_2}(\lambda_2)}\,(\widehat{w_3})_{k_3}(\lambda_3),
\end{multline}
where $\Omega = |k|^2-|k_1|^2+|k_2|^2-|k_3|^2$
and $\sum^{(\star)}$ is directly defined based on the definitions of $\mathcal{M}^{\circ}$, $\mathcal{M}^{<}$, $\mathcal{M}^{>}$ and $\mathcal{M}^{\ll}$ and the selection of $\Pi$. For example, if $\Mc^{(\star)}$ is $\Pi_N \Mc^>$, then 
there will be two more restrictions $|k|\leq N$ and $\<k_1-k_2\> > N^{1-\delta}$ in the sum $\sum^{(\star)}$. The other $\sum^{(\star)}$ will defined in the similar ways.

Before going into the different estimates for I--VII, we first make a few remarks.
\begin{itemize}
\item If a position $w_j$ has type (L) or (D), then in most cases we only need to consider type (L) terms since (\ref{input4}) is stronger than (\ref{input3}); there are exceptions that will be treated separately later.
\item the $w_j$ of type (G) can be considered as a special case of type (C) when $h^{(j)}_{k_j k'_j} (\lambda_j) = \mathbf{1}_{N_j/2< \langle k_j\rangle\leq N_j} \cdot \mathbf{1}_{k_j=k'_j}\cdot \widehat{\chi}(\lambda_j)$; if we avoid using the $\ell_{k_jk_j'}^2$ norm in (\ref{input2+}), then we only need to consider type (C) terms.
\item Term I can be estimates in the same way as term II. In fact the definition of $\Mc^>$ implies $\max(N_1,N_2)\geq N^{1-\delta}$, so we are essentially in (special case of) term II up to a possible loss $N^{C\delta}$ which will be negligible compared to the gain. Moreover, term V can be estimated similarly as term IV, see Section \ref{subsec:gamma}.
\item Terms VI and VII are readily estimated using the $X^\alpha\to X^\alpha$ bounds for the linear operator (\ref{linearmapping}) proved in Sections \ref{operbound} and \ref{linearbound}.
\end{itemize}

Based on these remarks, from now on we will consider terms II--IV (and VIII at the end), where the possible cases for the types of $(w_1,w_2,w_3)$ are 
(a) (C, C, C),
(b) (C, C, L),
(c) (C, L, C),
(d) (L, C, C),
(e) (L, L, C),
(f) (C, L, L),
(g) (L, C, L), 
and (h) (L, L, L).

In Section \ref{subsec:HH} we will estimate term II, which can be understood as high-high interactions in view of $\max(N_1,N_2)=N$, and noticing that assuming $k$ is the high frequency, then either $k_3$ is also high frequency or $|k_1-k_2|$ must be large. In Section \ref{subsec:gamma}, we will estimate terms III and IV by using a counting technique in a special situation called \emph{$\Gamma$-condition} (see (\ref{eq:Gamma})). In Section \ref{pairing} we consider the pairing case.
\subsection{High-high interactions}\label{subsec:HH}
We will estimate term II in this subsection. First we can repeat the arguments for $\lambda$, $\lambda_j$ and the Duhamel operator $I$ in (\ref{sec5:mult1}) as in Section 4 and 5. Namely, we first restrict to $|\lambda_j|\leq N^{100}$ and $|\lambda|,\,|\mu|\leq N^{100}$ where $\mu=\lambda-(\Omega+\lambda_1-\lambda_2+\lambda_3)$, and replace the unfavorable exponents $(1-b$ or $b$ depending on the context) by the favorable ones ($b$ or $1-b$), then exploit the resulting integrability in $\lambda_j$ to fix the values of $\lambda$, $\lambda_j$ and $\lfloor\mu\rfloor$. Then we reduce to the following expression where $\Omega_0$ is a fixed integer:
\begin{equation}\label{sec5:mult2}
\Xc_k := \sum_{k_1-k_2+k_3=k} h_{kk_1k_2k_3}^{\mathrm{b}}(\widehat{w_1})_{k_1}(\lambda_1)\, \overline{(\widehat{w_2})_{k_2}(\lambda_2)}\,(\widehat{w_3})_{k_3}(\lambda_3),
\end{equation} where $h^{\mathrm{b}}$ is the base tensor which contains the factors
\[V_{k_1-k_2}\cdot\mathbf{1}_{k_1-k_2+k'=k}\cdot \mathbf{1}_{|k|^2-|k_1|^2+|k_2|^2-|k'|^2=\Omega_0}.\] We assume $h^{\mathrm{b}}$ is supported in the set where $|k_j|\leq N_j$ and $\<k_1-k_2\>\sim R$ where $R$ is a dyadic number. Moreover we assume that $R$ and the support of $h^{\mathrm{b}}$ satisfies the conditions associated with the definition of some $\Mc^{(\star)}$. In view of the factor $|V_{k_1-k_2}|\sim R^{-\beta}$ in $h^{\mathrm{b}}$, we also define $h^{R,(\star)}:=R^\beta\cdot h^{\mathrm{b}}$, which is essentially the characteristic function of the set
\begin{equation}\label{sec5:set}
S^{R} =\left\{
\begin{array}{lr}
(k,k_1, k_2, k_3)\in (\mathbb{Z}^3)^4,\quad k_2\notin \{k_1, k_3\}\\
k=k_1-k_2+k_3, \quad |k|\leq N\\
|k|^2-|k_1|^2+|k_2|^2-|k_3|^2=\Omega_0\\
|k_j|\leq N_j\, (j\in \{1,2,3\}),\quad \<k_1-k_2\>\sim R 
\end{array} 
\right\},
\end{equation}possibly with extra conditions determined by the definition of $\Mc^{(\star)}$. We also define $S_k^{R}$ to be the set of $(k, k_1,k_2,k_3)\in S^{R}$ with fixed $k$, and similarly define $S_{k_1k_2}^R$ etc. Noticing that when $w_j$ has type (G), (C) or (L), we can further assume that $|k_j|>N/2$ in the definition of $S^R$.

The goal now is to bound the norm $\|\Xc_k\|_\ell^2$ or abbreviated $\|\Xc_k\|_k$, assuming $w_j$ satisfy the bounds (\ref{input1})--(\ref{input4}) but without the $\lambda_j$ component, for example (\ref{input4}) becomes $\|w_j\|_{k_j}\lesssim N_j^{-1/2+\varepsilon_1}$.
\subsubsection{Case (a): (C, C, C)}\label{casea}
In this case we have
\begin{equation}\label{xkexp1}
\Xc_k =R^{-\beta}\sum_{(k_1,k_2,k_3)} h^{R, (\star)}_{kk_1k_2k_3}\,\cdot \sum_{\substack{(k'_1,k'_2,k'_3)\\N_j/2<|k'_j|\leq N_j \\j\in\{1,2,3\}}} h^{(1)}_{k_1k'_1}\overline{h^{(2)}_{k_2k'_2}}h^{(3)}_{k_3k'_3}\,\frac{g_{k'_1}\overline{g_{k'_2}}g_{k'_3}}{\<k'_1\>\<k'_2\>\<k'_3\>},
\end{equation}
where $h^{(j)}_{k_jk'_j} = h^{(j)}_{k_jk'_j} (\omega)$ satisfies (\ref{input2+}) with some $N_j$ and $L_j\leq N_j/2$ for $1\leq j\leq 3$ and $h^{R, (\star)}_{kk_1k_2k_3}$ is defined as above.

To estimate $\|\Xc\|_k$ we would like to apply Proposition \ref{trim} and then Proposition \ref{merge}. Like in Sections \ref{Lbound} and \ref{estrhoN}, the way we apply Proposition \ref{trim} depends on the relative sizes of $N_j\,(1\leq j\leq 3)$. For example, if $N_1=N_2=N_3$ we shall apply Proposition \ref{trim} jointly in the $(k_1',k_2',k_3')$ summation in (\ref{xkexp1}); if $N_1=N_3>N_2$ we will first apply Proposition \ref{trim} jointly in the $(k_1',k_3')$ summation, then apply it in the $k_2'$ summation, if $N_3>N_1>N_2$ we will apply first in the $k_3'$ summation, then in the $k_1'$ summation, and then in the $k_2'$ summation, etc. The results in the end will be the same in all cases, so for example we will consider the case $N_3>N_1>N_2$. Now we have
\begin{equation}\label{eq:(a6)1}
\left\|\Xc_k\right\|_k =R^{-\beta}\bigg\|
\sum_{k'_3} \big(\sum_{k_3} \widetilde{H}_{kk_3} h^{(3)}_{k_3k'_3}\big) \frac{g_{k'_3}}{\<k'_3\>}
\bigg\|_k,
\end{equation}
where
\begin{equation}\label{eq:(a6)2}
\widetilde{H}_{kk_3} := \sum_{k'_1}\bigg(\sum_{k_1}
\mathring{H}_{kk_1k_3}h^{(1)}_{k_1k'_1}
\bigg)
\frac{g_{k'_1}}{\<k'_1\>},
\quad
\mathring{H}_{kk_1k_3}:= \sum_{k'_2} \bigg(\sum_{k_2}h^{R, (\star)}_{kk_1k_2k_3} \overline{h^{(2)}_{k_2k'_2}}\bigg) \frac{\overline{g_{k'_2}}}{\<k'_2\>}.
\end{equation}
By the independence between $g_{k'_3}$ and $\widetilde{H}_{kk_3} h^{(3)}_{k_3k'_3}$ since $N_3>N_1>N_2$, we apply Proposition \ref{trim} and Proposition \ref{merge} and get $\tau^{-1}N_*$-certainly that
\begin{equation}\label{eq:(a6)3}
\begin{aligned}
\left\|\Xc_k\right\|_k &\leq R^{-\beta}N_3^{-1} \cdot \bigg\|\sum_{k_1}
\widetilde{H}_{kk_3}h^{(3)}_{k_3k'_3}\bigg\|_{kk'_3}
\\
& \lesssim  R^{-\beta}N_3^{-1} \cdot \big\|h^{(3)}_{k_3k'_3}\big\|_{k'_3\to k_3} \big\|\widetilde{H}_{kk_3}\big\|_{kk_3}
\end{aligned}
\end{equation}
Similarly, by the independence between $g_{k'_1}$ and $\mathring{H}_{kk_1k_3} h^{(1)}_{k_1k'_1}$ since $N_1>N_2$, and also by the independence between $g_{k'_2}$ and $h^{R, (\star)}_{kk_1k_2k_3} \,\overline{h^{(2)}_{k_2k'_2}}$,
once again we can apply Proposition \ref{trim} and Proposition \ref{merge} to $\|\widetilde{H}_{kk_3}\|_{kk_3}$ and then to $\|\mathring{H}\|_{kk_1k_3}$. As a consequence, we have $\tau^{-1}N_*$-certainly
\begin{equation}\label{eq:(a6)4}
\begin{aligned}
\left\|\Xc_k\right\|_k\lesssim R^{-\beta} (N_1N_2N_3)^{-1} \cdot \bigg(\prod_{j=1}^3\big\|h^{(j)}_{k_jk'_j}\big\|_{k'_j\to k_j} \bigg) \cdot \left\| h^{R, (\star)}_{kk_1k_2k_3} \right\|_{kk_1k_2k_3}.
\end{aligned}
\end{equation}
In the other cases we get the same bound. Without loss of generality we may assume $N_1=N$, then using Lemma \ref{counting} we can estimate
\[\| h^{R, (\star)} \|_{kk_1k_2k_3}\lesssim N^\delta\cdot N_3^{3/2}\cdot RN_2\] which implies that $\|\Xc_k\|_k\lesssim N^{-1+C\delta}N_3^{1/2}\lesssim N^{-1/2+C\delta}$, which is enough for (\ref{apriori1.8}).
\subsubsection{Case (b): (C, C, L)}
In this case we have
\begin{equation}\label{eq:(b)1}
\Xc_k =R^{-\beta}\sum_{(k_1,k_2,k_3)} h^{R, (\star)}_{kk_1k_2k_3}\,\cdot \sum_{\substack{(k'_1,k'_2)\\N_j/2<|k'_j|\leq N_j }} h^{(1)}_{k_1k'_1}\overline{h^{(2)}_{k_2k'_2}}\,\frac{g_{k'_1}\overline{g_{k'_2}}}{\<k'_1\>\<k'_2\>}(w_3)_{k_3},
\end{equation}
where $h^{(j)}_{k_jk'_j} = h^{(j)}_{k_jk'_j} (\omega)$ satisfies (\ref{input2+}) with some $N_j$ and $L_j\leq N_j/2$ for $1\leq j\leq 2$ and the base tensor $h^{R, (\star)}_{kk_1k_2k_3}$ is defined as before. Clearly $\|\Xc_k\|_k$ can be bounded by $N_3^{-1/2+\varepsilon_1+\varepsilon_2}$ times the norm
\[R^{-\beta}\bigg\|\sum_{(k_1,k_2,k_3)} h^{R, (\star)}_{kk_1k_2k_3}\,\cdot \sum_{\substack{(k_1',k_2')\\N_j/2<|k'_j|\leq N_j}} h^{(1)}_{k_1k'_1}\overline{h^{(2)}_{k_2k'_2}}\,\frac{g_{k'_1}\overline{g_{k'_2}}}{\<k'_1\>\<k'_2\>}\bigg\|_{k\to k_3}.\] By applying Propositions \ref{trim} and \ref{merge} again, in the same manner as (\ref{casea}), we get that the above norm is bounded by
\[R^{-\beta}\cdot (N_1N_2)^{-1}\max(\|h^{R, (\star)}\|_{k\to k_1k_2k_3},\|h^{R, (\star)}\|_{kk_1\to k_2k_3},\|h^{R, (\star)}\|_{kk_2\to k_1k_3},\|h^{R, (\star)}\|_{kk_1k_2\to k_3}).\] By Lemma \ref{counting} we can conclude that
\[\max(\|h^{R, (\star)}\|_{k\to k_1k_2k_3},\|h^{R, (\star)}\|_{kk_1\to k_2k_3},\|h^{R, (\star)}\|_{kk_2\to k_1k_3},\|h^{R, (\star)}\|_{kk_1k_2\to k_3})\lesssim R\cdot\min(N_1,N_2),\] hence we easily get $\|\Xc_k\|_k\lesssim N^{-1+C\varepsilon_1}$, which is enough for (\ref{apriori1.8}).
\subsubsection{Cases (c): (C,L,C) and (d): (L,C,C)}\label{casec}
The estimates of Case (c) and Case (d) are similar with Case (b), so we will state the estimates in Case (c) and Case (d) without proofs. In Case (c) we get
\[\|\Xc_k\|_k\lesssim N_2^{-1/2+\varepsilon_1+\varepsilon_2}R^{-\beta}(N_1N_3)^{-1}\max(\|h^{R, (\star)}\|_{k\to k_1k_2k_3},\|h^{R, (\star)}\|_{kk_1\to k_2k_3},\|h^{R, (\star)}\|_{kk_3\to k_1k_2},\|h^{R, (\star)}\|_{kk_1k_3\to k_2})\] and in case (d) we get a similar bound, but with the subindices $1$ and $2$ switched.

Now by Lemma \ref{counting} we can obtain that
\[\max(\|h^{R, (\star)}\|_{k\to k_1k_2k_3},\|h^{R, (\star)}\|_{kk_3\to k_1k_2})\lesssim N^{C\delta}\cdot N_3\cdot\min(N_1,N_2),\]
\[\|h^{R, (\star)}\|_{kk_1\to k_2k_3}\lesssim N^{C\delta}\cdot \min(R\cdot\min(N_1,N_2),N_1N_3),\]
\[\|h^{R, (\star)}\|_{kk_1k_3\to k_2}\lesssim\min(R,N_1)N_3.\] In the first case we directly get 
\[\|\Xc_k\|_k\lesssim N_1^{-1}N_2^{-1/2+\varepsilon_1+\varepsilon_2}R^{-\beta}\cdot\min(N_1,N_2)
\] which is enough for (\ref{apriori1.8}) as $\max(N_1,N_2)=N$ and $R\geq N^\varepsilon$ in view of the definition of $\Mc^<-\Mc^\ll$. In the second case we get
\[\|\Xc_k\|_k\lesssim\min(R^{1-\beta}N_1^{-1}N_2^{1/2+\varepsilon_1+\varepsilon_2},R^{-\beta}N_2^{-1/2+\varepsilon_1+\varepsilon_2})\] which is also enough for (\ref{apriori1.8}) as $\max(N_1,N_2)=N$ and $R\geq N^\varepsilon$. In the third case we get
\[\|\Xc_k\|_k\lesssim N_2^{-1/2+\varepsilon_1+\varepsilon_2}\max(R,N_1)^{-1}\] which is also enough for (\ref{apriori1.8}). By switching the indices $1$ and $2$ we also get the same estimates in case (d).
\subsubsection{Case (e): (L,L,C)}\label{casee}
In this case we have
\begin{equation}\label{eq:(e)1}
\Xc_k =  \sum_{k'_3}\sum_{(k_1,k_2,k_3)}h^{R, (\star)}_{kk_1k_2k_3}\cdot (w_1)_{k_1}\overline{(w_2)_{k_2}} {h^{(3)}_{k_3k'_3}}\,\frac{g_{k'_3}}{\<k'_3\>},
\end{equation}
where $h^{(3)}_{k_3k'_3} = h^{(3)}_{k_3k'_3} (\omega)$ satisfies (\ref{input2+}) with some $N_3$ and $L_3\leq N_3/2$ and the base tensor $h^{R, (\star)}_{kk_1k_2k_3}$ is defined as before. By symmetry we may assume $N_1\leq N_2$, then by the same argument as above, using Propositions \ref{merge} and \ref{trim} we can bound
\[\|\Xc_k\|_k\lesssim(N_1N_2)^{-1/2+\varepsilon_1+\varepsilon_2}N_3^{-1}R^{-\beta}\cdot\max(\|h^{R, (\star)}\|_{kk_1k_3\to k_2},\|h^{R, (\star)}\|_{kk_1\to k_2k_3}).\] By Lemma \ref{counting} both tensor norms are bounded by $\min(N_1,R)N_3$; as $N_1\leq N_2$ (and hence $N_2=N$) and $R\geq N^{\varepsilon}$, it is easy to check that this bound is enough for (\ref{apriori1.8}).
\subsubsection{Cases (f): (C,L,L) and (g): (L,C,L)}
The estimates of Case (f) and (g) are similar with Case (e), so we will state the estimates of Case (f) and (g) directly. Again the two cases here only differs by switching indices $1$ and $2$, so we only consider case (f). Like in case (e) we get two bounds:
\[\|\Xc_k\|_k\lesssim (N_2N_3)^{-1/2+\varepsilon_1+\varepsilon_2}N_1^{-1}R^{-\beta}\max(\|h^{R, (\star)}\|_{kk_1k_2\to k_3},\|h^{R, (\star)}\|_{kk_2\to k_1k_3})\] and 
\[\|\Xc_k\|_k\lesssim (N_2N_3)^{-1/2+\varepsilon_1+\varepsilon_2}N_1^{-1}R^{-\beta}\max(\|h^{R, (\star)}\|_{kk_1k_3\to k_2},\|h^{R, (\star)}\|_{kk_1\to k_2k_3})\] Now if $N_3\geq N^{\varepsilon^2}$ we will apply the first bound and use that \[\max(\|h^{R, (\star)}\|_{kk_1k_2\to k_3},\|h^{R, (\star)}\|_{kk_2\to k_1k_3})\lesssim R\min(N_1,N_2),\] so the factor $N_1^{-1}N_2^{-1/2+\varepsilon_1+\varepsilon_2}\min(N_1,N_2)$, together with $N_3^{-1/2+\varepsilon_1+\varepsilon_2}$ where $N_3\geq N^{\varepsilon^2}$, provides the bound that is enough for (\ref{apriori1.8}). Moreover, the same bound also works if $N_2\leq N^{1-\varepsilon^2}$ (since in this case $N_1=N$).

If $N_3\leq N^{\varepsilon^2}$ and $N_2\geq N^{1-\varepsilon^2}$ we will apply the second bound and use that
\[\max(\|h^{R, (\star)}\|_{kk_1k_3\to k_2},\|h^{R, (\star)}\|_{kk_1\to k_2k_3})\lesssim N^{C\varepsilon^2}N_1\] assuming $N_3\leq N^{\varepsilon^2}$. This is also enough for (\ref{apriori1.8}) assuming $N_2\geq N^{1-\varepsilon^2}$ and $R\geq N^\varepsilon$.
\subsubsection{Case (h): (L,L,L)}
In this case we have\begin{equation}\label{eq:(h)1}
\Xc_k :=\sum_{(k_1,k_2,k_3)} h^{R, (\star)}_{kk_1k_2k_3}\cdot (w_1)_{k_1}\overline{(w_2)_{k_2}} (w_3)_{k_3},
\end{equation}
where the base tensor $h^{R,  (\star)}_{kk_1k_2k_3}$ is defined as before.
Then simply using Proposition \ref{merge} we get
\begin{equation}\label{eq:(h)2}
\|\Xc_k\|_k \lesssim  R^{-\beta} \cdot  (N_1N_2N_3)^{-1/2+\varepsilon_1+\varepsilon_2} \cdot \big\|h^{R, (\star)}_{kk_1k_2k_3} \big\|_{kk_2\to k_1k_3}.
\end{equation} By Lemma \ref{counting} we have $\|h^{R, (\star)}_{kk_1k_2k_3} \|_{kk_2\to k_1k_3}\lesssim (R\min(N_1,N_2))^{1/2}$, which implies
\[\|\Xc_k\|_k\lesssim R^{-\beta+1/2}\max(N_1,N_2)^{-1/2+C\varepsilon_1}\] which is enough for (\ref{apriori1.8}) because $\max(N_1,N_2)=N$ and $R\geq N^\varepsilon$.
\subsection{The $\Gamma$ condition terms}\label{subsec:gamma} In this section we estimate terms III and IV. These two terms are actually similar, and the key property they satisfy is the so-called \emph{$\Gamma$ condition}. Namely, due to the projections and assumptions on the inputs in terms III and IV, we have that \begin{equation}\label{eq:Gamma}\begin{aligned}
& &|k|^2\geq \Gamma \geq |k_3|^2,& \quad \text{for all }(k,k_1,k_2,k_3)\in S\\   &\text{or}& \quad |k|^2\leq  \Gamma\leq |k_3|^2,&\quad \text{for all }(k,k_1,k_2,k_3)\in S\end{aligned}
\end{equation} for some real number $\Gamma$, where $S$ is the support of the base tensor $h^{\mathrm{b}}$ (note that in term IV we may assume $w_3$ is not of type (D) as otherwise the bound follows from what we have already done, so here we may choose $\Gamma=(N/2)^2-1$).

To proceed, we return to $\widehat{\Ic_\chi\Mc^{(\star)}}(w_1,w_2,w_3)_k(\lambda)$ in (\ref{sec5:mult1}) where $\Omega=|k|^2-|k_1|^2+|k_2|^2-|k_3|^2$ and suppose $\mu = \lambda-(\Omega+\lambda_1-\lambda_2+\lambda_3)$ and then we have $|I|\lesssim \langle\lambda \rangle^{-1}\langle\mu \rangle^{-1}$ by (\ref{duhamleker}). Following the same reduction steps as before, we can assume $|\lambda|, |\lambda_j| (j=1,2,3), |\mu| \leq N^{100}$ and may replace the unfavorable exponents by the favorable ones. Now, instead of fixing each $\lambda_j$ and $\lambda$ and $\lfloor\mu\rfloor$, we do the following.

Without loss of generality, we may assume $|\lambda_3|$ is the maximum of all the parameters $|\lambda_j|$ and $|\lambda|$ and $|\mu|$; the other cases are treated similarly. We may fix a dyadic number $K$ and assume $|\lambda_3|\sim K$. Then, we may fix $\lambda_j\,(j\neq 3)$ and $\lambda$ and $\lfloor\mu\rfloor$, again using integrability in these variables, and exploit the weight $\<\lambda_3\>^b$ in the weighted norms in which $w_3$ is bounded, and reduce to an expression
\begin{equation}\label{sec5:mult2'}
\Xc_k := R^{-\beta} K^{-b}
  \sum_{k_1-k_2+k_3=k}\int\,\mathrm{d}\lambda_3\cdot h_{kk_1k_2k_3}^{R,K,(\star)}(\lambda_3)\cdot(w_1)_{k_1}\, \overline{(w_2)_{k_2}}\cdot(\widetilde{w_3})_{k_3}(\lambda_3),
\end{equation}
where $(\widetilde{w_3})_{k_3}(\lambda_3)=K^{b}(\widehat{w_3})_{k_3}(\lambda_3)$ and $h_{kk_1k_2k_3}^{R,K,(\star)}(\lambda_3)$ is essentially the characteristic function of the set (with possibly more restrictions according to the definition of $\Mc^{(\star)}$)
\begin{equation}\label{sec5:set'}
S^{R, K} =\left\{
\begin{array}{lr}
(k,k_1, k_2, k_3,\lambda_3)\in (\mathbb{Z}^3)^4\times\Rb,\quad k_2\notin \{k_1, k_3\}\\
k=k_1-k_2+k_3, \quad |k|\leq N\\
 |k|^2-|k_1|^2+|k_2|^2-|k_3|^2=-\lambda_3 + \Omega_0+O(1), \quad |\lambda_3|\sim K\\
|k_j|\leq N_j\, (j\in \{1,2,3\}),\quad \<k_1-k_2\>\sim R 
\end{array} 
\right\},
\end{equation}
where $\Omega_0$ is a fixed number such that $|\Omega_0|\lesssim K$. We also define the sets $S_k^{R,M}$ to be the set of $(k_1,k_2,k_3,\lambda_3)$ such that $(k, k_1, k_2,k_3,\lambda_3)\in S^{R,M}$ for fixed $k$ etc.. Note that when $w_j$ is of type (C), (G) or (L), we can further assume $\frac{N_j}{2}<|k_j|\leq N_j$.

The idea in estimating (\ref{sec5:mult2'}) is to view $(k_3,\lambda_3)$ as a whole (say denote it by $\widetilde{k_3}$), which will allow us to gain using the $\Gamma$ condition in estimating the norms of the base tensor $h^{R,K,(\star)}$. Though our tensors here  involve the variable $\lambda_3\in\Rb$, it is clear that Propositions \ref{merge0} and \ref{merge} still hold for such tensors, and Proposition \ref{trim} can also be proved by using a meshing argument (see Section \ref{rem}, where the derivative bounds in $\lambda_3$ is easily proved as all the relevant functions are compactly supported in physical space). Moreover, by the induction hypothesis and the manipulation above (for example with $Y^{1-b}$ norm replaced by $Y^b$ norm) we can also deduce corresponding bounds for $w_3=(w_3)_{\widetilde{k_3}}$ and the corresponding matrices such that $\widetilde{h}_{\widetilde{k_3}k_3'}^{(3)}$, for example $\|\widetilde{h}_{\widetilde{k_3}k_3'}^{(3)}\|_{k_3'\to\widetilde{k_3}}\lesssim L_3^{-1/2+3\varepsilon_1}$. Because of this, in the proof below we will simply write $\sum_{\widetilde{k_3}}$, while we actually mean $\sum_{k_3}\int\mathrm{d}\lambda_3$, so the proof has the same format as the previous ones.

We now consider the input functions. In term III, clearly $\max(N_1,N_2,N_3)\gtrsim N$; if $N_3\ll N$, then we must have $\max(N_1,N_2)\gtrsim N$ and $|k_1-k_2|\gtrsim N$, hence this term can be treated in the same way as term II. Therefore we may assume $N_3\sim N$, and clearly the same happens for term IV. If $\max(N_1,N_2)\gtrsim N$, then again using term II estimate we only need to consider the case where $|k_1-k_2|\lesssim N^{\varepsilon}$. This term can be treated using similar arguments as below and is much easier due to the smallness of $|k_1-k_2|$, so we will only consider the case $\max(N_1,N_2)\ll N$. In the same way we will not consider term V here. Finally, if $w_3=z_{N_3}$ with $N_3\sim N$, then (\ref{apriori1.8}) directly follows from the linear estimate proved in Section \ref{Lbound}, and the $\Gamma$ condition is not needed.

There are two cases: when $w_3$ has type (L) or or $w_3$ has type (C) (or (G)). In the latter case there are four further cases for the types of $w_1$ and $w_2$, which we will discuss below.
\subsubsection{The type (L) case} Suppose $w_3$ has type (L). Clearly if $\max(N_1,N_2)\geq N^{100\varepsilon_2}$ then (\ref{apriori1.8}) also follows from the linear estimates in Section \ref{Lbound} (because the difference between the $\rho^N$ bound and the $z_N$ bound in (\ref{apriori1.8}) is at most $N^{\varepsilon_2}$), so we may assume $\max(N_1,N_2)\leq N^{100\varepsilon_2}$. Then in (\ref{sec5:mult2'}) we may further fix the values of $(k_1,k_2)$ at the price of $N^{C\varepsilon_2}$, hence we may write\[\Xc_k=R^{-\beta}K^{-b}\sum_{\widetilde{k_3}}h(k,\widetilde{k_3})\cdot(\widetilde{w_3})_{\widetilde{k_3}}\] and by definition it is easy to see that $\|h\|_{\widetilde{k_3}\to k}\lesssim 1$. Then, (\ref{apriori1.8}) follows, using the bound for $w_3$, if $K\geq N^{\varepsilon_1^2}$. Finally, if $K\leq N^{\varepsilon_1^2}$, then we have $|\Omega|\lesssim N^{\varepsilon_1^2}$ where $\Omega=|k|^2-|k_1|^2+|k_2|^2-|k_3|^2$. Using the $\Gamma$ condition \ref{eq:Gamma}, we conclude that $|k_3|^2$ belongs to an interval of length $N^{O(\varepsilon_1^2)}$, so we can apply Proposition \ref{improve} to gain a power $N^{-\varepsilon_1/2}$, which covers the loss $N^{O(\varepsilon_2+\varepsilon_1^2)}$ and is enough for (\ref{apriori1.8}).
\subsubsection{The type (C,C,C) case} Now suppose $w_1$, $w_2$ and $w_3$ has type (C,C,C). By symmetry we may assume $N_1\leq N_2$. Then by the same argument as in Section \ref{casea}, we obtain that
\[\|\Xc_k\|_k\lesssim R^{-\beta}K^{-b}(N_1N_2N)^{-1}\|h^{R,K,(\star)}\|_{kk_1k_2\widetilde{k_3}}\cdot\|h^{(1)}\|_{k_1\to k_1'}\|h^{(2)}\|_{k_2\to k_2}\|h^{(3)}\|_{k_3'\to\widetilde{k_3}}.\] The last three factors are easily bounded by $1$, so it suffices to bound the tensor $h^{R,K,(\star)}$.

By definition, this is equivalent to counting the number of lattice points $(k,k_1,k_2,k_3)$ such that $k_1-k_2+k_3=k$ (and also satisfying the inequalities listed above) and $|\Omega|\lesssim K$. Note that \[||k_1|^2-|k_2|^2|\lesssim R\cdot\max(N_1,R):=K_1,\] so when $K\leq K_1$, by the $\Gamma$ condition, $|k|^2$ has at most $K_1$ choices, hence $k$ has at most $K_1N$ choices. Once $K$ is fixed, the number of choices fo $(k_1,k_2,k_3)$ is at most $KN_1^2R^2$, which leads to the bound
\[\|h^{R,K,(\star)}\|_{kk_1k_2\widetilde{k_3}}^2\lesssim N^{C\delta}\cdot KK_1NN_1^2R^2.\] If instead $K\geq K_1$, then $k$ has at most $KN$ choices, and once $k$ is fixed the number of choices for $(k_1,k_2,k_3)$ is at most $N_1^3R^3$, so we get \[\|h^{R,K,(\star)}\|_{kk_1k_2\widetilde{k_3}}^2\lesssim N^{C\delta}\cdot KNN_1^3R^3.\] In either way we get
\[\|\Xc_k\|_k\lesssim N^{C\varepsilon_2}N^{-1/2}\cdot\max(R,R^{1/2}N_1^{1/2})N_2^{-1}\] which is enough for (\ref{apriori1.8}) as $\max(R,N_1)\lesssim N_2$.
\subsubsection{The type (L,L,C) case}\label{typeLLC} Now suppose $w_1$, $w_2$ and $w_3$ has type (L,L,C). First assume $N_1\leq N_2$. The same arguments in Section \ref{casee} yields
\[\|\Xc_k\|_k\lesssim (N_1N_2)^{-1/2+\varepsilon_1+\varepsilon_2}N^{-1}R^{-\beta}K^{-b}\cdot \max(\|h^{R,K, (\star)}\|_{kk_1\widetilde{k_3}\to k_2},\|h^{R,K, (\star)}\|_{kk_1\to k_2\widetilde{k_3}}).\] The second norm above is easily bounded by $K^{1/2}RN_1$ using Lemma \ref{counting}, which is clearly enough for (\ref{apriori1.8}); for the first norm there are two ways to estimate.

The first way is to use Lemma \ref{counting} directly, without using $\Gamma$ condition, to get 
\[\|h^{R,K, (\star)}\|_{kk_1\widetilde{k_3}\to k_2}\lesssim K^{1/2}\min(R,N_1)N.\] The second way is to use the $\Gamma$ condition and first fix the value of $|k|^2$ and hence $k$, then count $(k_1,\widetilde{k_3})$. This yields
\[\|h^{R,K, (\star)}\|_{kk_1\widetilde{k_3}\to k_2}\lesssim K^{1/2}N^{1/2}(R+R^{1/2}N_1^{1/2})\min(R,N_1)^{1/2}\] assuming $K\leq K_1$, and a better bound assuming $K\geq K_1$.  Now, plugging in the second bound yields
\[\|\Xc_k\|_k\lesssim (N_1N_2)^{-1/2+\varepsilon_1+\varepsilon_2}N^{-1}R^{-\beta}K^{-b}\cdot K^{1/2}N^{1/2}(R+R^{1/2}N_1^{1/2})\min(R,N_1)^{1/2},\] which can be shown to be $\lesssim N^{-1/2}$ using the fact $\max(R,N_1)\leq N_2$ and by considering whether $R\geq N_1$ or $R\leq N_1$. Moreover the same estimate can be checked to work if $N_1\leq N_1^{1.1}$. If $N_1\geq N_2^{1.1}$ we can switch the subscripts $1$ and $2$, in which case we have the weaker bound \[\|\Xc_k\|_k\lesssim (N_1N_2)^{-1/2+\varepsilon_1+\varepsilon_2}N^{-1}R^{-\beta}K^{-b}\cdot K^{1/2}N^{1/2}(R+R^{1/2}N_2^{1/2})\min(R,N_2),\]without the $1/2$ power in the last factor, however this is still $\lesssim N^{-1/2}$ provided $N_1\geq N_2^{1.1}$.
\subsubsection{The type (L,C,C) and (C,L,C) cases}\label{hardnorm} Now suppose $w_1$, $w_2$ and $w_3$ has type (L,C,C); the case (C,L,C) is treated similarly. Here the same arguments in Section \ref{casec} implies
\begin{multline}\|\Xc_k\|_k\lesssim N_1^{-1}N_2^{-1/2+\varepsilon_1+\varepsilon_2}N^{-1}R^{-\beta}K^{-b}\\\times\max(\|h^{R,K,(\star)}\|_{kk_1\widetilde{k_3}\to k_2},\|h^{R,K,(\star)}\|_{k\to k_1k_2\widetilde{k_3}},\|h^{R,K,(\star)}\|_{kk_1\to k_2\widetilde{k_3}},\|h^{R,K,(\star)}\|_{k\widetilde{k_3}\to k_1k_2}).\end{multline} The two norms $k\to k_1k_2\widetilde{k_3}$ and $kk_1\to k_2\widetilde{k_3}$ can be estimated by $K^{1/2}R\min(N_1,N_2)$, using Lemma \ref{counting} only and without , which is clearly enough for (\ref{apriori1.8}). For the $kk_1\widetilde{k_3}\to k_2$ norm we can use the estimates in Section \ref{typeLLC} and get
\[\|h^{R,K, (\star)}\|_{kk_1\widetilde{k_3}\to k_2}\lesssim K^{1/2}N^{1/2}(R+R^{1/2}N_1^{1/2})\min(R,N_1)\lesssim K^{1/2}N^{1/2}RN_1\] up to $N^{C\delta}$ losses, which yields 
\[\|\Xc_k\|_k\lesssim R^{1-\beta}N^{-1/2}N_2^{-1/2+\varepsilon_1+\varepsilon_2}\]and is also enough for (\ref{apriori1.8}). Finally we consider the $k\widetilde{k_3}\to k_1k_2$ norm. By Schur's bound and using the $\Gamma$ condition we can get
\[\|h^{R,K,(\star)}\|_{k\widetilde{k_3}\to k_1k_2}\lesssim\min(N_1,N_2)\cdot (R+R^{1/2}\min(N_1,N_2)^{1/2})N^{1/2}\] (note the absence of $K$ on the right hand side) if $K\leq K_1:=R^2+R\min(N_1,N_2)$, and
\[\|h^{R,K,(\star)}\|_{k\widetilde{k_3}\to k_1k_2}\lesssim \min(N_1,N_2)\cdot K^{1/2}N^{1/2}\] if $K\geq K_1$. The second bound is obviously enough for (\ref{apriori1.8}); by examining the relation between $N_1$ and $N_2$, we see that the first bound is also enough if $\max(K,R)\geq N^{\varepsilon}$.

Finally, suppose $K,R\leq N^{\varepsilon}$, then by losing $N^{C\varepsilon}$ we may fix the values of $k_1-k_2$ and $\Omega=|k|^2-|k_1|^2+|k_2|^2-|k_3|^2$. Here we will improve the bound on the $k\widetilde{k_3}\to k_1k_2$ norm. Namely, when $(k_1,k_2)$ is fixed, let $\ell=k_1-k_2$ with $0<|\ell|\leq N^{\varepsilon}$, then the value of $k\cdot \ell$ is also fixed. Moreover, by the $\Gamma$ condition we know that $|k|^2$ belongs to an interval of length $O(\min(N_1,N_2))$. Once $|k|^2$ and $k\cdot \ell$ are fixed, $k$ will be determined by a lattice point on a two-dimensional ellipse of radius $O(N)$, and the number of such points is at most $N^{2/3}$ by a classical geometric argument (see for example \cite{FOSW}, Lemma 4.1). This leads to the improved bound
\[\|h^{R,K, (\star)}\|_{k\widetilde{k_3}\to k_1k_2}\lesssim N^{C\varepsilon}\min(N_1,N_2)^{3/2}N^{1/3},\] which is then enough for (\ref{apriori1.8}).
\subsection{The pairing case}\label{pairing} Now we consider the pairing case where we may expand some $w_j$ as in (\ref{input2}) and assume either $k_1'=k_2'$ or $k_2'=k_3'$. In this case we will use the cancellation (\ref{unitarity}) as before. First consider term II; there are four different cases.
\subsubsection{Case (C,C,C): $k_1'=k_2'$} Suppose each $w_j$ has type (C) (or (G)) and assume $k_1'=k_2'$, so in particular $N_1=N_2$. Since we are considering term II, we must have $N_1=N_2=N$. Then exploiting (\ref{unitarity}) like before, we can reduce to estimating the quantity
\begin{equation}
\Xc_k =R^{-\beta}\sum_{(k_1,k_2,k_3)} h^{R, (\star)}_{kk_1k_2k_3}\,\cdot \sum_{\substack{(k'_1,k'_3)\\N_j/2<|k'_j|\leq N_j}} \bigg(\frac{1}{\langle k_1'\rangle^2}-\frac{1}{\langle k_1\rangle^2}\bigg) h^{(1)}_{k_1k'_1}\overline{h^{(2)}_{k_2k'_1}}h^{(3)}_{k_3k'_3}\,\frac{g_{k'_3}}{\<k'_3\>}.
\end{equation} Note that we may assume $|k_1-k_1'|\lesssim L_1N^{\delta}$ and $|k_2-k_1'|\lesssim L_2N^{\delta}$, and $|k_1-k_2|\sim R$, so at a loss of $N^{C\delta}$ we may assume $R\lesssim\max(L_1,L_2)$, and \[\bigg|\frac{1}{\langle k_1'\rangle^2}-\frac{1}{\langle k_1\rangle^2}\bigg|\lesssim N^{-3}(R+\min(L_1,L_2)).\] Therefore, the matrix
\begin{equation}\label{defnewh1}\widetilde{h}_{k_1k_2}=\sum_{k_1'}\bigg(\frac{1}{\langle k_1'\rangle^2}-\frac{1}{\langle k_1\rangle^2}\bigg) h^{(1)}_{k_1k'_1}\overline{h^{(2)}_{k_2k'_1}}\end{equation} is bounded (up to loss $N^{C\delta}$) by
\[\|\widetilde{h}\|_{k_1k_2}\lesssim N^{-2}(R+\min(L_1,L_2))\cdot(L_1L_2)^{-1/2+3\varepsilon_1}.\] Note that here $h^{(1)}$ and $h^{(2)}$ cannot both be identity, so we may always estimate the non-identity one in the Hilbert-Schmidt $\ell^2$ norm. Using Propositions \ref{merge} and \ref{trim}, we can estimate
\[\|\Xc_k\|_k\lesssim R^{-\beta}N_3^{-1}\|\widetilde{h}\|_{k_1k_2}\cdot\|h^{(3)}\|_{k_3\to k_3'}\cdot\max(\|h^{R, (\star)}\|_{k\to k_1k_2k_3},\|h^{R, (\star)}\|_{kk_3\to k_1k_2}).\] Both norms are bounded by $NN_3$, so
\[\|\Xc_k\|_k\lesssim N^{-1}R^{-\beta}(R+\min(L_1,L_2))\cdot(L_1L_2)^{-1/2+3\varepsilon_1}\] which is enough for (\ref{apriori1.8}).
\subsubsection{Case (C,C,C): $k_2'=k_3'$} Now suppose each $w_j$ has type (C) or (G), and assume $k_2'=k_3'$, then $N_2=N_3$ and $\max(N_1,N_2)=N$. In this case we do not need to use the cancellation (\ref{unitarity}). The same argument as above yields
\[\|\Xc_k\|_k\lesssim N_1^{-1}R^{-\beta}\|\widetilde{h}\|_{k_2k_3}\|h^{(1)}\|_{k_1\to k_1'}\cdot\max(\|h^{R, (\star)}\|_{k\to k_1k_2k_3},\|h^{R, (\star)}\|_{kk_1\to k_2k_3})\] where $\widetilde{h}_{k_2k_3}$ is the matrix
\[\widetilde{h}_{k_2k_3}=\sum_{k_2'}\frac{1}{\<k_2'\>^2}\overline{h_{k_2k_2'}^{(2)}}h_{k_3k_2'}^{(3)}\] and satisfies $\|\widetilde{h}\|_{k_2k_3}\lesssim N_2^{-1}$. As both norms of $h^{R,(\star)}$ are bounded by $R\min(N_1,N_2)$, we get that
\[\|\Xc_k\|_k\lesssim N^{C\delta}\cdot(N_1N_2)^{-1}\min(N_1,N_2)\] which is enough for (\ref{apriori1.8}) as $\max(N_1,N_2)=N$.
\subsubsection{Case (C,C,L): $k_1'=k_2'$} Here assume that $w_1$ and $w_2$ has type (C) or (G), $w_3$ has type (L) (or (D)), and $k_1'=k_2'$. Then we have
\begin{equation}
\Xc_k =R^{-\beta}\sum_{(k_1,k_2,k_3)} h^{R, (\star)}_{kk_1k_2k_3}\,\cdot \sum_{N_1/2<|k'_1|\leq N_1} \bigg(\frac{1}{\langle k_1'\rangle^2}-\frac{1}{\langle k_1\rangle^2}\bigg) h^{(1)}_{k_1k'_1}\overline{h^{(2)}_{k_2k'_1}}(w_3)_{k_3}.
\end{equation} Hence we can easily estimate
\[\|\Xc_k\|_k\lesssim R^{-\beta}\|\widetilde{h}\|_{k_1k_2}\|w_3\|_{k_3}\cdot\|h^{R,(\star)}\|_{k\to k_1k_2k_3}\] where $\widetilde{h}$ is defined as in (\ref{defnewh1}). This yields
\[\|\Xc_k\|_k\lesssim N^{-1}(R+\min(L_1,L_2))(L_1L_2)^{-1/2+3\varepsilon_1}R^{-\beta}N_3^{-1/2+\varepsilon_1+\varepsilon_2}\min(R,N_3)\] as $N_1=N_2=N$, using also Lemma \ref{counting}. Since $R\lesssim\max(L_1,L_2)$, by considering the relative sizes between $R$ and $\min(L_1,L_2)$ we can check that this term is always bounded by $N^{-1+C\varepsilon_1}$, which is enough for (\ref{apriori1.8}).
\subsubsection{Case (L,C,C): $k_2'=k_3'$} Here we assume that $w_2$ and $w_3$ has type (C) or (G), $w_1$ has type (L) (or (D)), and $k_2'=k_3'$, then $N_2=N_3$ and $\max(N_1,N_2)=N$. In this case we will need to use the cancellation (\ref{unitarity}). Like before we can reduce to estimating the quantity
\begin{equation}
\Xc_k =R^{-\beta}\sum_{(k_1,k_2,k_3)} h^{R, (\star)}_{kk_1k_2k_3}\,\cdot (w_1)_{k_1}\sum_{N_2/2<|k'_2|\leq N_2} \bigg(\frac{1}{\langle k_2'\rangle^2}-\frac{1}{\langle k_2\rangle^2}\bigg) h^{(2)}_{k_2k'_2}\overline{h^{(3)}_{k_3k'_2}}.
\end{equation} Denote
\[\widetilde{h}_{k_2k_3}=\sum_{N_2/2<|k'_2|\leq N_2} \bigg(\frac{1}{\langle k_2'\rangle^2}-\frac{1}{\langle k_2\rangle^2}\bigg) h^{(2)}_{k_2k'_2}\overline{h^{(3)}_{k_3k'_2}},\] then similarly we have
\[\|\widetilde{h}\|_{k_2k_3}\lesssim N_2^{-3}\max(L_2,L_3)\cdot N_2(L_2L_3)^{-1/2+3\varepsilon_1}\lesssim N_2^{-3/2+C\varepsilon_1},\] hence
\[\|\Xc_k\|_k\lesssim N_1^{-1/2+\varepsilon_1+\varepsilon_2}N_2^{-3/2+C\varepsilon_1}R^{-\beta}\|h^{R,(\star)}\|_{k\to k_1k_2k_3}.\] Using that
\[\|h^{R,(\star)}\|_{k\to k_1k_2k_3}\lesssim \min(N_2,R)\cdot\min(N_1,N_2)\] and that $R\geq N^\varepsilon$, by considering the relative size between $N_1$, $N_2$ and $R$, it is easy to check that this bound is enough for (\ref{apriori1.8}) when $\max(N_1,N_2)=N$.
\subsubsection{The Gamma condition term} Finally we consider terms III and IV with pairing. Note that as in Section \ref{subsec:gamma} we may assume $N_3\sim N$ and $\max(N_1,N_2)\ll N$, hence the only possibility of pairing is $k_1'=k_2'$ (so $N_1=N_2$). Moreover if $w_3$ has type (L) or (D) the proof can be done as in Section \ref{subsec:gamma} above, so we only need to consider the case of type (C,C,C) and $k_1'=k_2'$ (in particular $N_1=N_2$). Like in Section \ref{subsec:gamma} we can reduce to the quantity
\[\Xc_k=R^{-\beta}K^{-b}\sum_{(k_1,k_2,\widetilde{k_3})}h_{kk_1k_2\widetilde{k_3}}^{R,K,(\star)}\sum_{k_3'}\widetilde{h}_{k_1k_2}\widetilde{h}_{\widetilde{k_3}k_3'}^{(3)}(F_{N_3})_{k_3'}\] where $\widetilde{h}$ is defined as in (\ref{defnewh1}). Using Propositions \ref{merge} and \ref{trim} we get
\[\|\Xc_k\|_k\lesssim R^{-\beta}K^{-b}N^{-1}\|\widetilde{h}\|_{k_1k_2}\|\widetilde{h}^{(3)}\|_{k_3'\to \widetilde{k_3}}\cdot\max(\|h^{R,K,(\star)}\|_{k\to k_1k_2\widetilde{k_3}},\|h^{R,K,(\star)}\|_{k\widetilde{k_3}\to k_1k_2}).\] The $k\to k_1k_2\widetilde{k_3}$ norm can be bounded by $K^{1/2}RN_1$ which is clearly enough for (\ref{apriori1.8}); for the $k\widetilde{k_3}\to k_1k_2$ norm we use the bound obtained in Section \ref{hardnorm} to get
\[\|h^{R,K,(\star)}\|_{k\widetilde{k_3}\to k_1k_2}\lesssim R^{1/2}N_1^{3/2}N^{1/2}K^{1/2}\] hence
\[\|\Xc_k\|_k\lesssim R^{-\beta}K^{-b}N^{-1}\cdot N_1^{-2}(R+\min(L_1,L_2))(L_1L_2)^{-1/2+3\varepsilon_1}\cdot R^{1/2}N_1^{3/2}N^{1/2}K^{1/2}\] with a possible loss of $N^{C\delta}$, which is enough for (\ref{apriori1.8}) by considering the relative size between $R$ and $\min(L_1,L_2)$, using also that $R\lesssim\max(L_1,L_2)$.
\subsection{Term VIII}\label{subsec:linear}
Finally, we will estimate term VIII, which is the last two lines of (\ref{eqnzn}). This is an easier term and most part of this term can be estimated using similar arguments as the above proof, so we will not detail them out. In fact, this term can be decomposed into expressions in the form
\begin{equation}\label{termviii}(\mathrm{VIII})_k(t)=-i\int_0^t\sum_\ell V_{k-\ell}(w_1)_k(t')\overline{(w_2)_\ell(t')}(w_3)_\ell(t').\end{equation} Here, if $w_2$ and $w_3$ both have type (C) or (G), then we can expand them as in (\ref{input2}) and exploit the independence if $k_2'\neq k_3'$, and exploit the cancellation (\ref{unitarity}) if $k_2'=k_3'$ (note that here $k_2=k_3=\ell$, so the right hand side of (\ref{unitarity}) is in fact $1$ instead of $0$, but this cancels with the term $-1/\<\ell\>^2$ which is subtracted in (\ref{eqnzn}); this is also the reason why the renormalization term), and the rest of proof can go just like before.

The hardest term in VIII in fact is the term where $w_1$ has type (G), one of $w_2$ and $w_3$ has type (G), and the other has type (L) or (D) in (\ref{termviii}). For such terms, standard estimates will fall short by a power $N^{1-\beta}$ as $\beta<1$; however since $1-\beta\ll\varepsilon_2$ by our choice, this can be controlled if we gain a power $N^{\varepsilon_2/10}$ from elsewhere. If either $w_2$ or $w_3$ has type (L), then we can plug in the equation satisfies by $\rho^N$ and estimate like Section \ref{estrhoN} to gain this extra power\footnote{There is also a term which is essentially $\Hs^{N,N^{\varepsilon'}}$ applied to $z_N$, which can also be treated using the Hilbert-Schmidt bound for the matrices $\Hs^{N,N^{\varepsilon'}}$. We omit the details.}.

So the only bad term is when $w_1$ has type (G), and when one of $w_2$ and $w_3$ has type (G), and the other has type (D). Let this term be $z_N^*$, then $z_N-z_N^*$ satisfies (\ref{apriori1.8}); due to the symmetry between $w_2$ and $w_3$ in (\ref{termviii}), we see that $(z_N^*)_k\in(-ig_k)\cdot\Rb$. Then, if we replace the type (D) term (which is $z_{N'}$ for some $N'$) with $z_{N'}-z_{N'}^*$ the resulting contribution will satisfy (\ref{apriori1.8}), while if we replace this term by $z_{N'}^*$, the net contribution to term VIII, after exploiting symmetry between $w_2$ and $w_3$, will be
\[\mathrm{Re}(\overline{g_\ell}\cdot (z_N^*)_\ell(t'))=0.\] Therefore, in any case, we can control this term by (\ref{apriori1.8}). This finishes the proof of Proposition \ref{mainprop} and hence Theorem \ref{main}.

\end{document}